\newtheorem{thm}{Theorem}[section]
\newtheorem{lem}{Lemma}[section]
\newtheorem{claim}{Claim}[section]
\theoremstyle{definition}
\begin{document}

	\title{\bf On the spectral extremal problem of planar graphs}
	\author{{Xiaolong Wang, Xueyi Huang, and Huiqiu Lin \setcounter{footnote}{-1}\footnote{\emph{E-mail address:}
	xlwangmath@163.com (X. Wang),
	huangxymath@163.com (X. Huang),  huiqiulin@126.com (H. Lin)}}\\
		\small School of Mathematics, East China University of Science and Technology,\\
		\small Shanghai 200237, China}

	\date{}
	\maketitle
	{\flushleft\large\bf Abstract}
The spectral extremal problem of planar graphs has aroused a lot of interest over the past three decades. In 1991, Boots and Royle [Geogr. Anal. 23(3) (1991) 276--282] (and Cao and Vince [Linear Algebra Appl. 187
(1993) 251--257] independently) conjectured that $K_2 + P_{n-2}$ is the unique graph attaining the maximum spectral radius among all planar graphs on $n$ vertices, where $K_2 + P_{n-2}$ is the graph obtained from $K_2\cup P_{n-2}$ by adding all possible edges between $K_2$ and $P_{n-2}$.  In 2017, Tait and Tobin [J. Combin. Theory Ser. B 126 (2017) 137--161] confirmed this conjecture for all sufficiently large $n$. In this paper, we consider the spectral extremal problem for planar graphs without specified subgraphs. For a fixed graph $F$, let $\mathrm{SPEX}_{\mathcal{P}}(n,F)$ denote the set of graphs attaining the maximum spectral radius among all  $F$-free planar graphs on $n$ vertices.  We describe a rough sturcture for the connected extremal graphs in
$\mathrm{SPEX}_{\mathcal{P}}(n,F)$ when $F$ is a planar graph not contained in $K_{2,n-2}$. As applications, we determine the extremal graphs in $\mathrm{SPEX}_{\mathcal{P}}(n,W_k)$, $\mathrm{SPEX}_{\mathcal{P}}(n,F_k)$ and $\mathrm{SPEX}_{\mathcal{P}}(n,(k+1)K_2)$ for all sufficiently large $n$, where $W_k$, $F_k$ and $(k+1)K_2$ are the wheel graph of order $k$, the friendship graph of order $2k+1$ and the disjoint union of $k+1$ copies of $K_2$, respectively. 
	
	\begin{flushleft}
		\textbf{Keywords:} Spectral radius; planar graph; wheel graph; friendship graph.
		
\bigskip

\textbf{AMS Classification:} 05C50
	\end{flushleft}

	\section{Introduction}
	
Let $\mathcal{F}$ be a family of graphs. A graph is said to be $\mathcal{F}$-free if it has no subgraph isomorphic to some $F\in\mathcal{F}$. In particular, if $\mathcal{F}=\{F\}$, then we write $F$-free intead of $\mathcal{F}$-free. The \textit{Tur\'{a}n number} of $\mathcal{F}$, denoted by $\mathrm{ex}(n,\mathcal{F})$, is the maximum number of edges in an $\mathcal{F}$-free graph on  $n$ vertices.  In extremal graph theory, the Tur\'{a}n problem has aroused a lot of interest. Two earliest results on Tur\'{a}n problem are the Mantel's theorem and the Tur\'{a}n's theorem. In 1907, Mantel \cite{Man07} determined the Tur\'{a}n number of $K_3$, which is attained by balanced complete bipartite graphs. As an extenstion of Mantel's theorem, Tur\'{a}n \cite{Tur41}  determined the Tur\'{a}n number of $K_{r+1}$ for every integer $r\geq 2$, which is attained by balanced complete $r$-partite graphs. Over the past half century, a large quantity of work has been carried out in  the Tur\'{a}n problems of various graphs, such as cycles \cite{FG15,Ver00}, wheels  \cite{Yua21}, friendship graphs \cite{EFGG95}, and so on. Nethertheless, the Tur\'{a}n problem  for even cycles is still far from resolved. For more results on Tur\'{a}n problem, we refer the reader to the survery paper \cite{FS00}.

In 2015, Dowden \cite{Dow16} initiated the study of planar Tur\'{a}n problem. The \textit{planar Tur\'{a}n number} of  $\mathcal{F}$, denoted by $\mathrm{ex}_{\mathcal{P}}(n,\mathcal{F})$, is the maximum number of edges in an $\mathcal{F}$-free planar graph on $n$ vertices. It was shown in   \cite{Dow16} that $\mathrm{ex}_{\mathcal{P}}(n,K_3)=2n-4$ and $\mathrm{ex}_{\mathcal{P}}(n,K_r)=3n-6$ for all $n\geq 6$ and $r\ge 4$. Since  there are arbitrarily large triangulations with maximum degree $6$, the planar Tur\'{a}n problem is trivial for graphs with maximum degree more than $6$. For this reson, the next natural type of graphs considered are cycles and others variations. Dowden \cite{Dow16} proved that $\mathrm{ex}_{\mathcal{P}}(n,C_4)\le \frac{15(n-2)}{7}$ for  $n\ge 4$ and  $\mathrm{ex}_{\mathcal{P}}(n,C_5)\le \frac{12n-33}{5}$ for $n\ge 11$, and obtained obtained infinitely many extremal
graphs attaining these upper bounds. Ghosh, Gy\H{o}ri, Martin, Paulos and  Xiao \cite{GGMPX20} proved that $\mathrm{ex}_{\mathcal{P}}(n,C_6)\le \frac{5n}{2}-7$ for $n\ge 18$. Lan, Shi and Song \cite{LSS19} present a sharp upper bound of $\mathrm{ex}_{\mathcal{P}}(n,\mathcal{C}_k+e)$ for $k\in \{4,5\}$ and an upper bound of $\mathrm{ex}_{\mathcal{P}}(n,\mathcal{C}_6+e)$, where $\mathcal{C}_k+e$ is the family of graphs obtained from a cycle $C_k$ by linking two nonadjacent vertices (via $e$) in the cycle. Recently, Fang, Wang and Zhai \cite{FWZ22} obtained sharp bounds of the planar Tur\'{a}n number of $k$-fans and friendship graphs for all non-trivial cases.

Let $G$ be a graph, and let $A(G)$ denote the adjacency matrix of $G$. The \textit{spectral radius} of $G$, denoted by $\rho(G)$, is the largest eigenvalue of $A(G)$. Analogous to the Tur\'{a}n problem, let $\mathrm{spex}(n, \mathcal{F})$ denote the maximum spectral radius of any $\mathcal{F}$-free graph on $n$ vertices, and  $\mathrm{SPEX}(n,\mathcal{F})$ denote the set of extremal graphs with respect to  $\mathrm{spex}(n,\mathcal{F})$. In 2010, Nikiforov \cite{Nik10} pioneered the systematic study of spectral Tur\'{a}n problems, although there are several earlier results on this topic. In recent years, the spectral Tur\'{a}n problem has become very popular, and a lot of attention has been paid on determining $\mathrm{spex}(n, \mathcal{F})$ and characterizing  $\mathrm{SPEX}(n, \mathcal{F})$ for various families of graphs \cite{CDT23,CDT22,CDT221,CFTZ20,CLZ24,LL24,LLT06,LP23,Nik08,WKX23,YWZ12,ZL20,ZL22,ZL23,ZLX22,ZW12,ZW20,ZW23}. Very recently, Cioab\u{a}, Desai and Tait \cite{CDT22} made a breakthrough on spectral Tur\'{a}n problems. They determined the exact value of $\mathrm{spex}(n, C_{2k})$ and characterized the extremal graphs in $\mathrm{SPEX}(n, C_{2k})$ for any $k\geq 3$ and sufficiently large $n$, which confirmed  a famous conjecture of Nikiforov in 2010 \cite{Nik10}. 

Spectral Tur\'{a}n problems belong to a broader framework of problems called Brualdi-Solheid problems \cite{BS86} that investigate the maximum spectral radius among all graphs belonging to a specified family of graphs. For planar graphs, the study of  Brualdi-Solheid problems has achieved abundant results. Let $G$ be a planar graph of order $n$. In 1988, Yuan \cite{Yua88} proved that $\rho(G) \leq \sqrt{5n-11}$. In 1993, Cao and Vince \cite{CV93} improved the upper bound to  $4+\sqrt{3n-9}$. Later, Yuan \cite{Yua95} improved the upper bound to $2\sqrt{2}+\sqrt{3n-\frac{15}{2}}$. In 2000,  Ellingham and Zha \cite{EZ00} improved the  upper bound to $2+\sqrt{2n-6}$. Additionally, it was conjectured by Boots and Royle \cite{BR91}  in 1991 (and independently by Cao and Vince \cite{CV93} in 1993) that $K_2 + P_{n-2}$  is the unique graph attaining the maximum spectral radius among all planar graphs of order $n$.  Until  2017, Tait and Tobin \cite{TT17} confirmed the conjecture for all sufficiently large $n$. Among other things,  Dvo\v{r}\'{a}k and Mohar \cite{DM10} found an upper bound on the spectral radius of planar graphs with a given maximum degree. For more results on the spectral radius of planar graphs, we refer the reader to \cite{LN21}.   

As an analogue of the planar Tur\'{a}n problem, it is natural to consider the spectral Tur\'{a}n problem within  planar graphs (called \textit{planar spectral Tur\'{a}n problem}).  Let $\mathrm{spex}_{\mathcal{P}}(n,\mathcal{F})$ denote the maximum spectral radius of the adjacency matrix of any $\mathcal{F}$-free planar graphs on $n$ vertices, and  $\mathrm{SPEX}_{\mathcal{P}}(n,\mathcal{F})$ denote the set of extremal graphs with respect to  $\mathrm{spex}_{\mathcal{P}}(n,\mathcal{F})$. In 2022, Zhai and Liu \cite{ZHL22} characterized the extremal graphs in $\mathrm{SPEX}_{\mathcal{P}}(n,\mathcal{F})$ when $\mathcal{F}$ is the family of $k$ edge-disjoint cycles. Very recently, Fang, Lin and Shi \cite{FLS23} determined the extremal graphs in $\mathrm{SPEX}_{\mathcal{P}}(n,tC_\ell)$ and $\mathrm{SPEX}_{\mathcal{P}}(n,t\mathcal{C})$, where $tC_\ell$ is the
disjoint union of $t$ copies of $\ell$-cycles, and $t\mathcal{C}$ is the family of $t$ vertex-disjoint cycles without length restriction. In this paper, we provide a structural theorem for connected extremal graphs in 
$\mathrm{SPEX}_{\mathcal{P}}(n,F)$ under the condition that $F$ is a planar graph not contained in $K_{2,n-2}$ and $n$ is sufficient large relative to the order of $F$, which extends a result of Fang, Lin and Shi (cf. \cite[Theorem 1.1]{FLS23}).

\begin{thm}\label{thm::1}
Let $F$ be a planar graph not contained in $K_{2,n-2}$ where
$n\geq \max\{2.67\times9^{17},\frac{10}{9}|V(F)|\}$. Suppose that $G$ is a connected extremal graph in $\mathrm{SPEX}_{\mathcal{P}}(n,F)$ and $X=(x_v:v\in V(G))^T$ is the positive eigenvector of $\rho:=\rho(G)$ with $\max_{v\in V(G)} x_v=1$. Then the following two statements hold.
\begin{enumerate}[(i)]
    \item There exist two vertices $u^\prime,u^{\prime\prime}\in V(G)$ such that $R:=N_{G}(u^\prime)\cap N_{G}(u^{\prime\prime})=V(G)\setminus\{u^{\prime},u^{\prime\prime}\}$ and $x_{u^{\prime}}=x_{u^{\prime\prime}}=1$. In particular, $G$ contains a copy of $K_{2,n-2}$.
    
    \item The subgraph $G[R]$ of $G$ induced by $R$ is a disjoint union of some paths and cycles. Moreover, if $G[R]$ contains a cycle then it is exactly a cycle, i.e., $G[R]\cong C_{n-2}$, and if $u^{\prime}u^{\prime\prime} \in E(G)$ then $G[R]$ is a disjoint union of some paths. 
\end{enumerate}
\end{thm}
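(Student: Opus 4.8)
The plan is to first fix the order of magnitude of $\rho$, then establish the strong eigenvector concentration underlying statement (i), upgrade this concentration to exact adjacency by an extremality argument, and finally deduce (ii) from a planar embedding of the copy of $K_{2,n-2}$ produced along the way. The only genuine planarity input I will use is that $K_{3,3}$ is non-planar, in the quantitative form \emph{any three vertices of $G$ have at most two common neighbours}, together with the outerplanarity of each $G[N(v)]$. For the a priori bounds: since $F\not\subseteq K_{2,n-2}$, the graph $K_{2,n-2}$ is itself planar and $F$-free, so extremality gives $\rho\ge\rho(K_{2,n-2})=\sqrt{2(n-2)}$, while the Ellingham--Zha bound \cite{EZ00} gives $\rho\le 2+\sqrt{2n-6}$, whence $\rho=\sqrt{2n}\,(1+o(1))$. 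Using $x_v\le 1$ in the identity $\rho\sum_{v}x_v=\sum_{v}d(v)x_v$ together with $2e(G)\le 6n-12$ yields the global bound $\sum_{v}x_v\le (6n-12)/\rho=O(\sqrt n)$, which I will use repeatedly.

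\emph{Concentration on two hubs (the main obstacle).} Let $u'$ attain $x_{u'}=1$. Counting walks of length two from $u'$ gives the identity $\rho^{2}=d(u')+\sum_{w\ne u'}x_w\,|N(u')\cap N(w)|$, and comparing with $\rho^{2}=2n\,(1+o(1))$ shows that the weighted sum of common-neighbour counts is $\Theta(n)$. The planarity constraint prevents this mass from being diffuse, and the plan is to bootstrap it into the statement that there is a \emph{unique} second vertex $u''$ with $x_{u''}$ bounded below by an absolute positive constant, that every remaining vertex has weight $O(1/\sqrt n)$, and that $d(u'),d(u'')=n-o(n)$. Converting the crude ``all non-hub entries are $O(1)$'' into the sharp ``all non-hub entries are $O(1/\sqrt n)$'', while simultaneously excluding a third comparably heavy vertex (this is exactly where three-vertices-at-most-two-common-neighbours bites), is the technical heart of the argument and the step I expect to be hardest; iterating these estimates is what forces the explicit threshold on $n$.

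\emph{Exact adjacency and the identity of the hubs.} Given two dominant vertices $u',u''$ of degree $n-o(n)$, I will prove $R:=N(u')\cap N(u'')=V(G)\setminus\{u',u''\}$ by a relocation argument. Suppose some $z\notin\{u',u''\}$ misses a hub, say $z\not\sim u'$. Every neighbour of $z$ lying in the common neighbourhood of both hubs is one of at most two vertices, so (using that all non-hub weights are $O(1/\sqrt n)$ and that $F$ is tiny relative to $n$) the current neighbour-weight $\sum_{w\sim z}x_w$ is strictly below $x_{u'}+x_{u''}$; hence deleting the edges at $z$ and joining $z$ to exactly $u'$ and $u''$ strictly increases the Rayleigh quotient $X^{T}A(G)X/X^{T}X$. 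The modified graph stays planar, because the near-complete bipartite structure leaves a face incident to both hubs in which to draw $z$, and it remains $F$-free, because any newly created copy of $F$ would have to use the degree-two vertex $z$ joined to the two hubs, a configuration that $F\not\subseteq K_{2,n-2}$ rules out. This contradicts extremality, so no hub misses any vertex and (i) holds up to the values $x_{u'},x_{u''}$. Finally, once both hubs are adjacent to all of $R$, the transposition $u'\leftrightarrow u''$ fixing $R$ is an automorphism of $G$, forcing $x_{u'}=x_{u''}$; as $u',u''$ strictly dominate every other vertex in weight, this common value equals the maximum $1$, and $G$ contains $K_{2,n-2}$.

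\emph{From (i) to (ii).} Part (ii) requires no further spectral input. Fix a planar embedding of $G$; it restricts to an embedding of the spanning subgraph $K_{2,n-2}$ with parts $\{u',u''\}$ and $R$, and, writing $r_1,\dots,r_{n-2}$ for the cyclic order of $R$ about the hubs, every face of this $K_{2,n-2}$ is a quadrilateral $u'r_iu''r_{i+1}$. Each edge of $G$ with both ends in $R$ is drawn inside one such face and hence joins two consecutive vertices $r_i,r_{i+1}$; therefore $G[R]$ is a subgraph of the cycle $C_{n-2}=r_1r_2\cdots r_{n-2}r_1$, i.e.\ a disjoint union of paths and cycles, and it contains a cycle only if it is all of $C_{n-2}$. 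If in addition $u'u''\in E(G)$, this edge must be drawn inside a single face $u'r_ju''r_{j+1}$, which then cannot also contain $r_jr_{j+1}$; the cycle is thus broken and $G[R]$ is a disjoint union of paths, completing (ii).
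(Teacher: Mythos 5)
Your overall architecture matches the paper's (concentration of the Perron vector on two hubs, then a relocation/extremality argument to force $R_1:=V(G)\setminus(\{u',u''\}\cup R)=\emptyset$, then planarity to structure $G[R]$), and your derivation of (ii) via the embedded $K_{2,n-2}$ — all $n-2$ faces are quadrilaterals $u'r_iu''r_{i+1}$, so every $R$-edge joins consecutive $r_i$'s and $G[R]$ is a subgraph of a Hamilton cycle on $R$ — is correct and in fact cleaner than the paper's $K_5$-/$K_{3,3}$-minor contractions (its Lemma 2.6). But there are two genuine gaps. First, the concentration step is not proved: you explicitly defer it as ``the technical heart \dots the step I expect to be hardest.'' That step is the bulk of the paper (its Lemmas 2.2--2.5, a multi-page bootstrap via the sets $L^{\lambda}$, degree lower bounds $d_G(u)\geq(x_u-\tfrac{4}{729})n$, and the extraction of $u''$ with $x_{u''}\geq\tfrac{722}{734}$), and a proposal that names it without executing it has not proved the theorem. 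Moreover, the bound you plan to feed into the relocation — all non-hub entries $O(1/\sqrt n)$ — is stronger than what is available before (i) is established: the paper only gets $x_u\leq 0.124$ for $u\in R\cup R_1$ at that stage (a vertex of $R$ may a priori have many neighbours in $R_1$, so the $O(1/\rho)$ bound of its Lemma 2.8 genuinely requires the spanning $K_{2,n-2}$, i.e.\ requires (i) first). The paper's relocation works with the weaker constant bound by processing $R_1$ in a $5$-degenerate order, gaining $x_{u'}+x_{u''}-1.868>0$ per vertex.

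Second, your planarity claim for the relocated graph — ``the near-complete bipartite structure leaves a face incident to both hubs in which to draw $z$'' — fails in exactly the case the paper must treat separately: $u'u''\notin E(G)$ and $G[R]$ a spanning cycle. There, joining a vertex $z$ to both $u'$ and $u''$ creates a $K_5$-minor (contract the cycle into a triangle and the path $u'zu''$ into an edge), and indeed in any embedding of $2K_1+C_{n-2}$ no face is incident with both hubs, since each quadrilateral face of the $K_{2,n-2}$ is split by the cycle edge $r_ir_{i+1}$ into two triangles each containing only one hub. The paper's Subcase 2.1 repairs this by first deleting a cycle edge $u_1u_2$ whose endpoints have no $R_1$-neighbours and compensating the loss $x_{u_1}x_{u_2}=O(1/n)$ against the gain $\geq 0.115\,x_{v_r}=\Omega(1/\sqrt n)$ from a vertex $v_r\in R_1$ adjacent to a hub, and its Subcase 2.2 (no such $v_r$) attaches $R_1$ to $u'$ only; your proposal has no mechanism for this case. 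A smaller point: your $F$-freeness justification (``a degree-two vertex joined to the two hubs is ruled out by $F\not\subseteq K_{2,n-2}$'') is a non sequitur as stated, since the rest of the putative copy of $F$ may use edges inside $R$; the correct argument, as in the paper, replaces each relocated vertex by an unused $w\in R\setminus V(F)$ (possible since $|R|\geq 0.972n>|V(F)|$) with $\{u',u''\}\subseteq N_G(w)$, showing $F$ would already occur in $G$ — this is fixable, but it must be said.
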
 

The \textit{join} of two graphs $G$ and $H$, denoted by $G+H$, is the graph obtained from $G\cup H$ by adding all possible edges between $G$ and $G$. The \textit{wheel graph} of order $k$ and the \textit{friendship graph} of order $2k+1$ are then defined as $W_k=K_1+C_{k-1}$ and $F_k=K_1+kK_2$, respectively.
As applications of Theorem \ref{thm::1}, we consider the planar spectral Tur\'{a}n problem for wheel graphs, friendship graphs, and independent edges, respectively. According to a result of Nikiforov \cite{Nik09}, the extremal graph with respect to $\mathrm{spex}(n,W_{2\ell})$ for sufficiently large $n$ is exactly the Tur\'{a}n
graph with three parts. For odd wheel graphs, Cioab\u{a}, Desai and Tait \cite{CDT221} determined the structure
of the extremal graphs with respect to $\mathrm{spex}(n,W_{2\ell+1})$ for all $\ell\geq 2$, $\ell\notin\{4,5\}$ and sufficiently large $n$. With regard to planar graphs, we obtain the following result.
\begin{thm}\label{thm::2}
Let $k$ and $n$ be integers with $k\geq 3$ and  $n\geq \max\{2.67\times9^{17},10.2\times2^{k-4}+2\}$. 
Then $\mathrm{SPEX}_{\mathcal{P}}(n,W_k)=\{W_{n,k}\}$, where
\begin{equation*}
          W_{n,k}:= 
          \begin{cases}
          K_{2,n-2},& \mbox{if $k=3$},\\
          2K_1+C_{n-2},& \mbox{if $k=4$},\\
          K_{2}+\left(\lfloor\frac{n-2}{k-3}\rfloor P_{k-3} \cup P_{n-2-(k-3)\cdot\lfloor\frac{n-2}{k-3}\rfloor}\right), & \mbox{if $k\geq 5$}.
          \end{cases}
     \end{equation*} 
\end{thm}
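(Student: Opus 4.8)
The plan is to deduce Theorem~\ref{thm::2} from the structural Theorem~\ref{thm::1}. First I would verify the hypotheses: $W_k$ is planar, and since $W_k$ contains a triangle when $k\ge 4$ while $W_3=K_3$, whereas $K_{2,n-2}$ is bipartite, we have $W_k\not\subseteq K_{2,n-2}$; also the assumed bound $n\ge 10.2\times 2^{k-4}+2$ easily gives $n\ge\frac{10}{9}|V(W_k)|=\frac{10}{9}k$. A short preliminary argument shows that any extremal graph is connected: if it were disconnected, adding a bridge between two components preserves planarity and, since $W_k$ is $2$-edge-connected and a bridge lies in no cycle, preserves $W_k$-freeness, while strictly increasing $\rho$ by Perron--Frobenius. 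Thus Theorem~\ref{thm::1} applies and the extremal $G$ has the form $G=H_0+G[R]$, where $R=N_G(u')\cap N_G(u'')=V(G)\setminus\{u',u''\}$, $H_0=K_2$ if $u'u''\in E(G)$ and $H_0=2K_1$ otherwise, and $G[R]$ is either the spanning cycle $C_{n-2}$ (possible only when $H_0=2K_1$) or a disjoint union of paths.

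Next I would convert $W_k$-freeness into a condition on $G[R]$ by locating the hubs of a possible $W_k=K_1+C_{k-1}$, using that $w$ is a hub exactly when $G[N_G(w)]\supseteq C_{k-1}$. Each $v\in R$ has $\deg_G(v)\le 4$, so $v$ can only be a hub when $k\le 5$, in which case $G[N_G(v)]$ lies inside $C_4$ (if $H_0=2K_1$) or inside $K_4-e$ (if $H_0=K_2$); this isolates the local obstructions for $k\in\{3,4,5\}$. For the hub $u'$ one has $G[N_G(u')]=G[R]$ when $H_0=2K_1$, whose only cycles are the cycle components of $G[R]$, and $G[N_G(u')]=u''+G[R]$ when $H_0=K_2$; in the latter case any cycle through $u''$ traverses a subpath of a single path-component, so the cycle lengths that occur are precisely $3,4,\dots,M+1$, where $M$ is the order of the longest path-component. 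Hence, for $k\ge 4$ and $H_0=K_2$, the graph $G$ is $W_k$-free if and only if every path-component of $G[R]$ has at most $k-3$ vertices.

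With these criteria I would split on $k$. For $k=3$, any edge of $G[R]$ together with a hub forms a triangle $K_3=W_3$, and so does a hub edge; thus $G[R]$ is edgeless and $u'u''\notin E(G)$, forcing $G=K_{2,n-2}$. For $k\ge 4$ two Perron reductions simplify the search among admissible graphs: first, adding the hub edge $u'u''$ to any admissible $H_0=2K_1$ graph with $G[R]$ a union of paths keeps it planar and (by the criterion) $W_k$-free while strictly raising $\rho$, so among path-type graphs the extremal one has $H_0=K_2$; second, merging two path-components whose combined order is at most $k-3$ adds an edge inside the connected graph and again strictly raises $\rho$, so the path-components should be as long as the cap $k-3$ allows. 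This leaves exactly two candidate shapes to compare: the bipyramid $2K_1+C_{n-2}$ and the graph $K_2+\big(\lfloor\tfrac{n-2}{k-3}\rfloor P_{k-3}\cup P_{r}\big)$ with $r=(n-2)\bmod(k-3)$. For $k=4$ the path cap is $k-3=1$, so the latter degenerates to $K_2+\overline{K_{n-2}}$, which carries no $R$-edges, whereas $2K_1+C_{n-2}$ is $K_4$-free (each $G[N_G(v)]=C_4$) with $n-2$ rim edges, and the cycle wins. For $k=5$ the cycle creates a $W_5$ through every rim vertex and is inadmissible, and for $k\ge 6$ it is admissible but loses to the path candidate; in all cases $k\ge 5$ the winner is $W_{n,k}$.

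The main obstacle is to upgrade the heuristic that ``the hub edge is worth $\Omega(1)$ while a rim edge is worth $O(1/n)$'' into rigorous strict inequalities valid uniformly in the stated range of $n$, which is where the thresholds $2.67\times 9^{17}$ and $10.2\times 2^{k-4}+2$ are consumed. Normalizing the Perron vector by $x_{u'}=x_{u''}=1$ gives $x_v=\Theta(n^{-1/2})$ for $v\in R$, and I would first establish sharp two-sided bounds on $\rho$ and on the entries $x_v$ for graphs $H_0+G[R]$ by analyzing the eigenequations at $u',u''$ and along each component. These bounds must be tight enough to decide the single cross-comparison $\rho(2K_1+C_{n-2})$ versus $\rho\big(K_2+(\text{paths of order}\le k-3)\big)$ in the correct direction for each $k$. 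A second delicate point, needed for uniqueness, is that among all partitions of $n-2$ into parts of size at most $k-3$ the spectral radius of $K_2+(\text{paths})$ is \emph{strictly} maximized by the partition into $\lfloor\tfrac{n-2}{k-3}\rfloor$ parts of size $k-3$ plus one part of size $r$; here merging no longer applies, and I would instead use a vertex-transfer switching $P_a\cup P_b\to P_{a+1}\cup P_{b-1}$ (with $a+1\le k-3$) and show, via the eigenequations, that it strictly increases $\rho$, driving every partition to the claimed shape. Finally I would record that each $W_{n,k}$ is planar---$K_{2,n-2}$ and the bipyramid $2K_1+C_{n-2}$ are manifestly planar, and $K_2+(\text{paths})$ is a spanning subgraph of the planar graph $K_2+P_{n-2}$---and is $W_k$-free, which completes the proof.
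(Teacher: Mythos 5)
Your overall architecture coincides with the paper's: apply Theorem \ref{thm::1}, translate $W_k$-freeness into a cap of $k-3$ on the orders of the path-components of $G[R]$ (this is the content of the paper's Claim \ref{claim::4.1}, and your hub-localization criterion is a correct and cleaner way to get it), handle $k=3,4$ separately, and rebalance the linear forest by switchings. However, your first Perron reduction is false as stated. For $k\geq 6$ (and for $k=4$) the admissible $H_0=2K_1$ graphs are \emph{not} subject to the $(k-3)$-cap: since $G[N_G(u')]=G[R]$ contains no cycle and every rim vertex has degree at most $4$, the graph $2K_1+P_{n-2}$ is $W_k$-free for every $k\ge 6$; adding the hub edge to it produces $K_2+P_{n-2}$, which contains $W_k$ (hub $u'$, rim formed by $u''$ together with a subpath of order $k-2$). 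So ``adding the hub edge keeps it $W_k$-free by the criterion'' fails exactly on the graphs you need to eliminate --- your criterion presupposes the cap, which $2K_1$-type admissible graphs need not satisfy. The step is repairable inside your framework: any $2K_1$-type graph whose rim is a linear forest is a proper subgraph of $2K_1+C_{n-2}$, so only the bipyramid survives; but then everything rests on the cross-comparison you defer to the end. Note how the paper avoids comparing $\rho(2K_1+C_{n-2})$ and $\rho(K_2+(\text{capped paths}))$ directly: assuming $G\cong 2K_1+C_{n-2}$, it adds $u'u''$ and deletes every third rim edge, and bounds the Rayleigh quotient with $G$'s own Perron vector, gaining $x_{u'}x_{u''}=1$ against a loss of at most $\frac{n+1}{3}\bigl(\frac{2}{\rho}+\frac{4.496}{\rho^2}\bigr)^2<1$ via Lemma \ref{lem::8}(ii). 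Your planned ``sharp two-sided bounds on the entries'' are precisely Lemma \ref{lem::8}(ii), but in your write-up this decisive inequality remains a declared obligation rather than a proof; and it is worth recording that at $k=5$ the leading terms of the two candidates tie (deleting $\sim n/2$ rim edges costs $\sim 1$), so the comparison is escaped only because the bipyramid contains $W_5$, as you correctly observe.

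The second genuine gap is the strict monotonicity of the vertex transfer $P_a\cup P_b\to P_{a+1}\cup P_{b-1}$, which you assert can be shown ``via the eigenequations.'' This is the paper's Lemma \ref{lem::9} and is the technical heart of the whole argument: it requires the scaled difference estimates $\rho^{i}(x_{v_{i+1}}-x_{v_i})\in A_i$ and $\rho^{i}(x_{v_i}-x_{w_i})\in B_i$ proved by induction along both paths (Claim \ref{claim::4.0}), a parity split on $s_2$ with a tie-breaking step for uniqueness (if $\rho=\rho^*$ then $X$ satisfies both eigenequations at $v_{t_1}$, forcing $x_{v_{t_1+1}}=x_{w_{t_2}}$, a contradiction), and it is exactly where the hypothesis $n\ge 10.2\times 2^{s_2}+2$ is consumed, because the interval widths grow like $2^{i}/\rho^{i+2}$ along the path. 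Without this lemma neither the direction of each switch nor the uniqueness of $W_{n,k}$ is established. Your ``merging'' reduction, by contrast, is sound but redundant: it is the $s_2=1$ case of the paper's $(s_1,s_2)$-transformation, where $K_2+H$ becomes a proper subgraph of $K_2+H^*$.
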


In 2020, Cioab\u{a}, Feng, Tait and Zhang \cite{CFTZ20} proved that every extremal graph in $\mathrm{SPEX}(n,F_k)$ is exactly an extremal graph with respect to $\mathrm{ex}(n,F_k)$ for all sufficiently large $n$. Based on this result, Zhai, Liu and Xue \cite{ZLX22} identified the unique extremal graph in  $\mathrm{SPEX}(n,F_k)$ for all sufficiently large $n$. 
For planar graphs, we prove that the extremal graph is also unique.

\begin{thm}\label{thm::3}
Let $k$ and $n$ be integers with $k\geq 1$ and $n\geq \max\{2.67\times9^{17},10.2\times2^{2k-4}+2\}$. Then  $\mathrm{SPEX}_{\mathcal{P}}(n,F_k)=\{F_{n,k}\}$, where
\begin{equation*}
          F_{n,k}:=  \begin{cases}K_{2,n-2}, & \mbox{if $k=1$}, 
          \\ K_{2}+(n-2)K_1, & \mbox{if $k=2$},
          \\ K_{2}+(P_{2k-3} \cup (n-2k+1)K_1), & \mbox{if $k\geq 3$}.
          \end{cases}
     \end{equation*}
\end{thm}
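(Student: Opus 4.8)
The plan is to use Theorem~\ref{thm::1} to reduce the problem to a finite, controlled optimization over the structure of $G[R]$. Let $G$ be a connected extremal graph in $\mathrm{SPEX}_{\mathcal{P}}(n,F_k)$ with the eigenvector $X$ normalized as in Theorem~\ref{thm::1}. Since $F_k=K_1+kK_2$ is planar and (for the relevant $k$) is not a subgraph of $K_{2,n-2}$, Theorem~\ref{thm::1} applies: there are two vertices $u',u''$ with $R=N_G(u')\cap N_G(u'')=V(G)\setminus\{u',u''\}$, both of eigenvector value $1$, and $G[R]$ is a disjoint union of paths and cycles, with the cycle case forcing $G[R]\cong C_{n-2}$. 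The first step is to rule out all structures for $G[R]$ that either create an $F_k$ or are suboptimal, leaving only the claimed extremal graph $F_{n,k}$.

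First I would settle the small cases $k=1,2$ directly. For $k=1$, $F_1=K_3$, so $F_1$-freeness forces $G[R]$ to be edgeless and $u'u''\notin E(G)$, giving $G\subseteq K_{2,n-2}$; since $K_{2,n-2}$ itself is triangle-free and planar and contains the $K_{2,n-2}$ guaranteed by Theorem~\ref{thm::1}, equality holds and $G\cong K_{2,n-2}$. For $k=2$, $F_2=K_1+2K_2$, a triangle-free requirement on the neighborhood is replaced by: no vertex of $R\cup\{u',u''\}$ may be the apex of two disjoint edges among its neighbors, which translates into the induced subgraph on $R$ together with the $u'u''$ adjacency avoiding a specific configuration; the optimum adds the edge $u'u''$ and keeps $R$ independent, yielding $K_2+(n-2)K_1$. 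The main work is $k\geq 3$. Here the key observation is that the apex of any $F_k$ would most naturally be $u'$ or $u''$ (since they dominate $R$), so $F_k$-freeness restricts how many disjoint edges can appear among the neighbors of $u'$ (resp.\ $u''$). Because $u'$ is adjacent to every vertex of $R$, an $F_k$ centered at $u'$ appears precisely when $G[R]\cup\{u'u'' \text{ if present}\}$ contains a matching of size $k$. Thus $F_k$-freeness is essentially the condition that the graph $G[R]$ (possibly augmented by the edge $u'u''$) has matching number at most $k-1$.

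The crux is then to maximize $\rho(G)$ subject to $G[R]$ being a union of paths/cycles of matching number $\le k-1$. I would use the eigenvector equation $\rho x_v=\sum_{w\sim v}x_w$ together with $\rho\,x_{u'}=\rho\cdot 1=\sum_{w\in R}x_w(+x_{u''})$ to show that $\rho$ is, up to lower-order terms, an increasing function of $\sum_{w\in R}x_w$, and that adding an edge inside $R$ strictly increases $\rho$ whenever it does not violate the matching or planarity constraint. This pushes $G[R]$ to contain as many edges as possible while keeping the matching number at $k-1$; a graph of bounded matching number $\nu\le k-1$ that is a union of paths and cycles and maximizes the relevant eigenvector-weighted edge contribution is a single path $P_{2k-3}$ (which has matching number exactly $k-2$ after accounting for the extra matched edge gained via $u'u''$, with the precise bookkeeping giving matching number $k-1$ in the augmented graph) plus isolated vertices, so that $G[R]\cong P_{2k-3}\cup(n-2k+1)K_1$ and the edge $u'u''$ is present. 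Comparing the candidate $K_2+(P_{2k-3}\cup(n-2k+1)K_1)$ against all competing path/cycle decompositions via a careful eigenvector perturbation argument pins down uniqueness. The hard part will be this last comparison: showing that concentrating all the allowed edges into one short path (rather than spreading them over several paths, or using a configuration nearer the high-eigenvector vertices) strictly maximizes $\rho$, which requires sharp estimates on the eigenvector entries of the low-degree vertices in $R$ (all of order $1/\rho\approx 1/\sqrt{2n}$) and on how a single edge relocation perturbs $\rho$ to second order; the cycle case $G[R]\cong C_{n-2}$ must also be excluded, which follows because $C_{n-2}$ has matching number $\lfloor(n-2)/2\rfloor\gg k-1$ and hence already contains $F_k$ for large $n$.
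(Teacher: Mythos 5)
Your outline reproduces the paper's proof skeleton — reduce via Theorem \ref{thm::1}, translate $F_k$-freeness into a matching/edge constraint on $G[R]$, exclude $G[R]\cong C_{n-2}$, then optimize by local edge moves — but the decisive step is missing, not merely compressed. By Lemma \ref{lem::8}(ii) every eigenvector entry on $R$ equals $\frac{2}{\rho}+O(\rho^{-2})$, so to first order all admissible unions of paths with $2k-4$ edges (e.g.\ $P_{2k-3}$ versus $2P_{k-1}$ versus $(k-2)P_3$) contribute the same weight, and your heuristic "adding an edge inside $R$ strictly increases $\rho$ until the matching constraint binds" cannot distinguish among them. The paper's resolution is Lemma \ref{lem::9}: each $(s_1,s_2)$-transformation strictly increases $\rho(K_2+H)$, and its proof rests on Claim \ref{claim::4.0}, which establishes $\rho^{i}(x_{v_{i+1}}-x_{v_i})\in\bigl[\frac{2}{\rho}-\frac{4.496\times 2^{i}}{\rho^{2}},\,\frac{2}{\rho}+\frac{4.496\times 2^{i}}{\rho^{2}}\bigr]$ and $\rho^{i}(x_{v_i}-x_{w_i})\in\bigl[-\frac{4.496\times 2^{i}}{\rho^{2}},\,\frac{4.496\times 2^{i}}{\rho^{2}}\bigr]$. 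The relevant differences $x_{v_{i+1}}-x_{w_i}$ are of order $\rho^{-i-1}$, exponentially small in the position along the path, and they dominate the error terms only because $\rho>4.496\times 2^{s_2/2}$ — which is precisely where the hypothesis $n\geq 10.2\times 2^{2k-4}+2$ enters. Your proposal names this comparison as "the hard part" but supplies no mechanism; a generic second-order perturbation with entries of order $1/\sqrt{2n}$ yields neither the strict inequalities nor the equality-case analysis (the paper must separately rule out $\rho=\rho^{*}$ via the eigenvalue equations at the cut vertices).

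There is a second concrete gap: the presence of $u'u''$. This edge is not free — it competes against edges of $G[R]$. If $u'u''\notin E(G)$, $F_k$-freeness permits up to $2k-2$ edges in $G[R]$, whereas $u'u''\in E(G)$ forces at most $2k-4$ (Claim \ref{claim::4.2}), so one must prove that the single heavy edge $u'u''$ (weight $x_{u'}x_{u''}=1$) beats up to $2k-2$ light edges of weight roughly $4/\rho^{2}$ each; the paper does this explicitly via the swap estimate $1-(2k-2)\bigl(\frac{2}{\rho}+\frac{4.496}{\rho^{2}}\bigr)^{2}>0$. The same issue already arises at $k=2$, where you assert "the optimum adds the edge $u'u''$ and keeps $R$ independent" without comparing against the competing configuration $2K_1+\bigl((n-4)K_1\cup K_2\bigr)$, which the paper eliminates by exactly this kind of computation. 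Your reduction and your identification of the matching-number constraint (including the correct exclusion of the cycle case) are sound, but these two quantitative comparisons constitute the actual content of the paper's argument for Theorem \ref{thm::3}, and without them the proof does not close.
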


The \textit{matching number} of a graph $G$ is maximum size of a subset of $E(G)$ that contains only independent edges. In 2007, Feng, Yu and Zhang \cite{FYZ07} characterized the extremal graphs attaining the maximum spectral radius among all graph on $n$ vertices with given matching number.  For planar graphs, we also obtain the extremal graphs.

\begin{thm}\label{thm::4}
Let $k$ and $n$ be integers with $k\geq 1$ and $n\geq N+\frac{3}{2}\sqrt{2N-6}$ where $N=\max\{2.67\times9^{17},10.2\times2^{2k-4}+2\}$. Then  $\mathrm{SPEX}_{\mathcal{P}}(n,(k+1)K_2)=\{M_{n,k}\}$, where
$$ 
M_{n,k}=\begin{cases}
K_{1,n-1}, &  \mbox{if $k=1$}, \\ 
K_2+(n-2)K_1, &  \mbox{if $k=2$}, \\ K_2+(P_{2k-3}\cup (n-2k+1)K_1), & \mbox{if $k\geq 3$}.
\end{cases}   
$$
In particular, $M_{n,k}$ is the  unique graph attaining the maximum spectral radius among all planar graphs of order $n$ with matching number $k$.
\end{thm}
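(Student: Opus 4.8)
The plan is to exploit the identification that a graph $G$ is $(k+1)K_2$-free if and only if its matching number $\nu(G)$ satisfies $\nu(G)\le k$, so that the two parts of the statement concern the same class of graphs. The case $k=1$ must be handled by hand, because $2K_2\subseteq K_{2,n-2}$ and Theorem~\ref{thm::1} does not apply: a graph with $\nu(G)\le 1$ has pairwise intersecting edges, hence is a star or a triangle together with isolated vertices, and among planar graphs of order $n$ the star $K_{1,n-1}$ (with $\rho=\sqrt{n-1}$) strictly dominates both the triangle and every proper sub-star, giving $\mathrm{SPEX}_{\mathcal P}(n,2K_2)=\{K_{1,n-1}\}=\{M_{n,1}\}$. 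For the remainder I would assume $k\ge 2$; then $(k+1)K_2$ is planar and, since $\nu(K_{2,n-2})=2<k+1$, it is not contained in $K_{2,n-2}$, so Theorem~\ref{thm::1} becomes available.

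First I would pass to a connected extremal graph. Let $G$ be any extremal graph and let $G_1$ be a component with $\rho(G_1)=\rho(G)=\mathrm{spex}_{\mathcal P}(n,(k+1)K_2)$, say $|V(G_1)|=m$. Padding an order-$m$ graph with isolated vertices shows $\mathrm{spex}_{\mathcal P}$ is nondecreasing in the order, so $\rho(G_1)\le \mathrm{spex}_{\mathcal P}(m,(k+1)K_2)\le \mathrm{spex}_{\mathcal P}(n,(k+1)K_2)=\rho(G_1)$; hence $G_1$ is itself extremal on its $m$ vertices. Because $M_{n,k}$ contains $K_2+(n-2)K_1$ as a spanning subgraph, $\rho(G)\ge \rho(M_{n,k})\ge \tfrac12\bigl(1+\sqrt{8n-15}\bigr)$, while the planarity of $G_1$ gives $\rho(G_1)\le 2+\sqrt{2m-6}$ by \cite{EZ00}. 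The hypothesis $n\ge N+\tfrac32\sqrt{2N-6}$ is calibrated precisely so that $\tfrac12(1+\sqrt{8n-15})\ge 2+\sqrt{2N-6}$, which rules out $m<N$ and therefore guarantees $m\ge N$; in particular Theorems~\ref{thm::1} and~\ref{thm::3} apply to $G_1$.

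Applying Theorem~\ref{thm::1} to $G_1$ yields vertices $u',u''$ joined to all of $R=V(G_1)\setminus\{u',u''\}$ with $G_1[R]$ a disjoint union of paths and cycles. The cycle alternative $G_1[R]\cong C_{m-2}$ is impossible, since it would force $\nu(G_1)\ge\lfloor (m-2)/2\rfloor>k$; hence $L:=G_1[R]$ is a linear forest and $K_2+L\subseteq K_2+P_{m-2}$ is planar. If the edge $u'u''$ were absent, inserting it would keep the graph planar and, because the at least $m-2-2\nu(L)\ge 2$ vertices of $L$ left exposed by a maximum matching allow both $u',u''$ to be matched into $R$, would leave $\nu$ unchanged while strictly increasing $\rho$; extremality of $G_1$ therefore forces $u'u''\in E(G_1)$, i.e. $G_1=K_2+L$. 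Finally $\nu(G_1)\le k$ forces $\nu(L)\le k-2$, since a matching of size $k-1$ in $L$ together with two edges from $u',u''$ to exposed vertices would exhibit a $(k+1)$-matching.

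The decisive point is that such a graph is also $F_k$-free: an $F_k$ centred at $u'$ or $u''$ would require $\nu(K_1+L)=\nu(L)+1\ge k$, which is false, while a vertex of $R$ has at most two neighbours in $L$, so its neighbourhood spans a graph of matching number at most $2<k$ (and for $k=2$ the forest $L$ is edgeless). Hence $G_1$ is an $F_k$-free planar graph on $m\ge N$ vertices, and Theorem~\ref{thm::3} gives $\rho(G_1)\le\rho(F_{m,k})$ with equality only for $G_1\cong F_{m,k}$. One checks directly that $F_{m,k}=M_{m,k}$ and $\nu(M_{m,k})=k$, so $M_{m,k}$ is $(k+1)K_2$-free, and extremality of $G_1$ gives $\rho(M_{m,k})=\rho(G_1)=\rho(G)\ge\rho(M_{n,k})\ge\rho(M_{m,k})$, forcing equality throughout and $G_1\cong M_{m,k}$. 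Since $\rho(M_{m,k})$ is strictly increasing in $m$ (adjoining a vertex adjacent to both apices raises $\rho$ without changing $\nu$), $\rho(M_{m,k})=\rho(M_{n,k})$ forces $m=n$, so $G=G_1\cong M_{n,k}$ and $\mathrm{SPEX}_{\mathcal P}(n,(k+1)K_2)=\{M_{n,k}\}$. The concluding assertion is then immediate: graphs of order $n$ with matching number exactly $k$ form a subfamily of the $(k+1)K_2$-free graphs that already contains the maximiser $M_{n,k}$, so $M_{n,k}$ is the unique maximiser among them as well. The main obstacle is the connectivity step of the second paragraph, where the sharp lower bound on $\rho(M_{n,k})$ coming from the $K_2+(n-2)K_1$ subgraph must be played against the Ellingham--Zha planar bound; this comparison is exactly what the enlarged threshold on $n$ secures.
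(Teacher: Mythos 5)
Your proposal is correct, and while the outer skeleton coincides with the paper's, the endgame is genuinely different. The reduction to a connected component is essentially the paper's: both arguments play the lower bound $\rho(M_{n,k})\geq\rho(K_2+(n-2)K_1)=\frac{1}{2}(1+\sqrt{8n-15})$ against the Ellingham--Zha bound of Lemma \ref{lem::1}, and your observation that $n\geq N+\frac{3}{2}\sqrt{2N-6}$ is calibrated exactly so that $\frac{1}{2}(1+\sqrt{8n-15})\geq 2+\sqrt{2N-6}$ is precisely the paper's computation; the paper splits into $n_1\geq N$ (proper-subgraph contradiction) versus $n_1<N$, whereas you force $m\geq N$ outright and then use strict monotonicity of $\rho(M_{m,k})$ in $m$ to get $m=n$, which is a slightly cleaner packaging of the same estimates. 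Where you truly diverge is in the connected case (the paper's Lemma \ref{lem::10}). There, the paper proves Claim \ref{claim::4.3} -- that $u'u''\in E(G)$ and $e(G[R])\leq 2k-4$ -- via spectral perturbation inequalities using the eigenvector bounds of Lemma \ref{lem::8}(ii), and then pins down $M_{n,k}$ by repeated application of the $(s_1,s_2)$-transformation Lemma \ref{lem::9}. You instead establish $u'u''\in E(G_1)$ and $\nu(L)\leq k-2$ by purely combinatorial matching arguments (edge insertion preserving the matching number because $L$ has many exposed vertices), and then observe that any graph $K_2+L$ with $\nu(L)\leq k-2$ is automatically $F_k$-free -- checking centers $u',u''$ via $\nu(K_1+L)\leq k-1$ and centers in $R$ via $|N(v)|\leq 4$ -- so that Theorem \ref{thm::3} applies and identifies $G_1\cong F_{m,k}=M_{m,k}$ for $k\geq 2$. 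This bypasses both Lemma \ref{lem::9} and the eigenvector estimates entirely, and it makes explicit the structural reason why the extremal graphs for $F_k$ and $(k+1)K_2$ coincide for $k\geq 2$, which in the paper appears only as a coincidence of formulas; the price is that your argument leans on the full uniqueness statement of Theorem \ref{thm::3} (legitimately, since it is proved before Theorem \ref{thm::4} and independently of it), while the paper's version is self-contained given Lemma \ref{lem::9} and also yields the intermediate edge count $e(G[R])\leq 2k-4$ directly. Your hand treatment of $k=1$ (pairwise intersecting edges give a star or a triangle plus isolated vertices) is a harmless substitute for the paper's remark that $G$ would otherwise be a proper subgraph of $K_{1,n-1}$.
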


\section{Key Lemmas}\label{sec::2}
In this section, we list a series of lemmas, which focuses on the structural properties of a special class of graphs, namely the connected planar graphs on $n$ vertices  with spectral radius not less than $\sqrt{2n-4}$. These properties play a key role in the proof of our main results.

First of all, we recall a classic result of Ellingham and Zhang \cite{EZ00} regarding the upper bound of the spectral radius of planar graphs.  
\begin{lem}(Ellingham and Zha \cite{EZ00}\label{lem::1})
    Let $G$ be a planar graph with $n\geq 3$. Then $\rho(G)\leq 2+\sqrt{2n-6}$.
\end{lem}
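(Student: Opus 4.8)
The plan is to prove the inequality for edge-maximal planar graphs and deduce the general case by monotonicity. Since $\rho$ is monotone under subgraph inclusion and every planar graph on $n\ge 3$ vertices is a spanning subgraph of some plane triangulation, it suffices to treat the case where $G$ is a triangulation, so that $|E(G)|=3n-6$ and $\delta(G)\ge 3$. In particular $\rho\ge \tfrac{2|E(G)|}{n}=6-\tfrac{12}{n}\ge 2$, so the claim is equivalent to the sharper-looking form $(\rho-2)^2\le 2n-6$. Let $X=(x_v)$ be the positive Perron eigenvector of $\rho=\rho(G)$, normalized so that $\max_v x_v=x_z=1$. The governing identity will be the two-step eigenvalue equation
\[
\rho^2 x_v=\sum_{u\sim v}\sum_{w\sim u}x_w ,
\]
whose right-hand side is a count of length-$2$ walks from $v$ weighted by the endpoint. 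Any identity phrased purely through $A(G)$ and $X$ is circular, so planarity must be injected as a genuine combinatorial inequality; supplying it is the next step.

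The essential planar input is local, and it is precisely what lets the bound beat the generic degree estimates $\rho\le\sqrt{5n-11}$ and $\rho\le 1+\sqrt{3n-8}$ and reach the planar order $\sqrt{2n}$. For every vertex $v$, the graph obtained by joining $v$ to all of $N(v)$ is a plane subgraph of $G$, which forces $G[N(v)]$ to be outerplanar; hence $e(G[N(v)])\le 2d_v-3$ whenever $d_v\ge 2$, equivalently
\[
\sum_{u\sim v}\bigl|N(u)\cap N(v)\bigr|=2\,e(G[N(v)])\le 4d_v-6 .
\]
I would record this for every vertex, and in particular at the maximum-weight vertex $z$.

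With these tools I would organize the length-$2$ walks from a vertex by their endpoint $w$. The backtracking walks ($w=v$) contribute exactly $d_v x_v$; the walks closing a triangle ($w\in N(v)$) are governed by the outerplanar bound above; and the walks reaching the second neighborhood ($w\notin N[v]$) are to be controlled through the global edge bound $|E(G)|\le 3n-6$ together with $\delta(G)\ge 3$. Carrying out this decomposition and rearranging so as to isolate $\rho-2$ should collapse the estimate to $(\rho-2)^2\le 2n-6$, which is the claim.

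The main obstacle is the second-neighborhood term. The naive estimate $x_w\le 1$ is hopelessly lossy there: it only yields something of order $\sqrt{3n}$ or worse, because it ignores that vertices far from $z$ carry small eigenweight. To capture this I expect to weight the two-step identity by $x_v$ and sum over all of $V(G)$, turning the pointwise outerplanar bounds $e(G[N(v)])\le 2d_v-3$ into a single averaged inequality, and then to apply Cauchy--Schwarz together with the quantities $\sum_v d_v x_v^2$ and the edge count $3n-6$ to convert the weighted walk count back into $\rho$. Balancing the resulting quadratic in $\rho$ against the target $2n-6$, without surrendering the constant, is the delicate heart of the argument; an alternative route is a discharging argument on the faces of the triangulation that exploits triangularity to redistribute the neighborhood edge-surpluses $4d_v-6$.
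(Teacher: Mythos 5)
Note first that the paper does not prove this lemma at all: it is quoted as a known result of Ellingham and Zha \cite{EZ00}, so there is no internal proof to compare against, and your attempt has to stand on its own. Judged that way, it is an outline whose decisive step is missing. The parts you carry out are sound: the reduction to triangulations via monotonicity of $\rho$ under edge addition is legitimate, and the local planar input $e(G[N(v)])\le 2d_v-3$ (equivalently $\sum_{u\sim v}|N(u)\cap N(v)|=2e(G[N(v)])\le 4d_v-6$), justified either by outerplanarity of $G[N(v)]$ or simply by applying $e(S)\le 3|S|-6$ to $N(v)\cup\{v\}$, is correct; these are the same planarity inequalities the present paper records in \eqref{equ::2}. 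But the argument stops exactly where the theorem begins.

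Two concrete problems. First, your proposed closing device --- weight the two-step identity by $x_v$, sum over $V(G)$, then apply Cauchy--Schwarz --- is circular in its unadorned form: $\sum_{v} x_v (A^2X)_v = \rho^2 X^TX$ is an identity, so all content must come from precisely how the pointwise planar bounds are inserted and how the resulting degree terms are controlled, and this is never specified. Second, the degree terms are the real enemy, not just the second neighborhood: inserting your three-way walk decomposition at the maximum-weight vertex $z$ (with $x_z=1$, $x_w\le 1$) gives $\rho^2 \le d_z + (4d_z-6) + e\left(N(z),N_2(z)\right) \le 5d_z + 2n-12$, and since $d_z\ge \rho$ can be of order $n$ (e.g., in $K_2+P_{n-2}$), this yields only a $\sqrt{5n}$-to-$\sqrt{7n}$-type bound, no better than the Hong or Cao--Vince estimates you set out to beat. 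In the averaged version the analogous quantity $\sum_v d_v x_v^2$ satisfies $\sum_v d_v x_v^2\ge \rho\,X^TX$ but admits no useful planar upper bound, so the quadratic in $\rho$ you hope to balance against $2n-6$ does not close without a genuinely new mechanism for trading the surplus degree terms against $4\rho$. That trade is the entire content of Ellingham and Zha's argument, and it is exactly the step you label ``the delicate heart'' and defer; the discharging alternative is likewise only named. As it stands, the proposal is a sensible research plan with correct preparatory lemmas, not a proof.
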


Before going further, we introduce some notations and symbols. Let $G$ be a  graph with vertex set $V(G)$ and edge set $E(G)$. For any $v\in V(G)$, let $N_{i}(v)$ denote the set of vertices at distance $i$ from $u$ in $G$. In particular, we write $N_G(v)$ or $N(v)$ instead of $N_1(v)$, and denote by  $d_G(v):=|N_{G}(v)|$. Also, for any subset $S\subseteq V(G)$, let $N_S(v)$ denote the set of vertices in $S$ that are adjacent to  $v$.  For any two disjoint subset $S, T \subseteq V(G)$, let $G[S]$ denote the subgrah of $G$ induced by $S$, and let $G[S, T]$ denote the bipartite subgraph of $G$ with vertex set $S \cup T$ consisting of all edges between $S$ and $T$ in $G$. Set $e(S)=|E(G[S])|$ and $e(S, T)=|E(G[S, T])|$. For any planar graph $G$, it is known that  
\begin{equation}\label{equ::2}
e(S) \leq 3|S|-6 \text { and } e(S, T) \leq 2(|S|+|T|)-4,
\end{equation}
where $S$ and $T$ are disjoint subsets of $V(G)$.

Assume that $G$ is a connected planar graph on $n$ vertices with $n\geq 2.67\times 9^{17}$ and $\rho:=\rho(G)\geq\sqrt{2n-4}$. In what follows, we shall present some structural properties possessed by  the graph $G$ (see Lemmas \ref{lem::2}--\ref{lem::8} below).

Let  $X=(x_{v}:v\in V(G))^{T}$ be the positive eigenvector of  $\rho:=\rho(G)$ with $\max_{v\in V(G)} x_{v}=1$.   For any real number $\lambda \geq \frac{1}{9^{3}}$, we define $$L^{\lambda}=\left\{u \in V(G) \mid x_{u} \geq \frac{1}{9^{3} \lambda}\right\}.$$ 
	\begin{lem}\label{lem::2}
		$|L^{\lambda}| \leq \frac{\lambda n}{9^{5}}$.	
	\end{lem}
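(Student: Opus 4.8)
The plan is to combine the eigenvalue equation with the planar edge bound: the eigen-equation forces every vertex of $L^{\lambda}$ to have large degree, while planarity caps the \emph{total} degree of $G$, so $L^{\lambda}$ cannot be large. First I would extract the degree lower bound. Fix $u\in L^{\lambda}$. Since $\max_{v}x_{v}=1$, the eigen-equation yields
\[
\rho x_{u}=\sum_{v\sim u}x_{v}\le d_{G}(u),
\]
so that, using $x_{u}\ge \tfrac{1}{9^{3}\lambda}$ and the standing hypothesis $\rho\ge\sqrt{2n-4}$,
\[
d_{G}(u)\ge \rho x_{u}\ge \frac{\rho}{9^{3}\lambda}\ge \frac{\sqrt{2n-4}}{9^{3}\lambda}.
\]
Thus every vertex of $L^{\lambda}$ has degree at least $\tfrac{\sqrt{2n-4}}{9^{3}\lambda}$.

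Next I would sum this over $L^{\lambda}$ and compare against the total degree. Because $G$ is planar with $n\ge 3$, we have $|E(G)|\le 3n-6$, and since degrees are nonnegative,
\[
|L^{\lambda}|\cdot\frac{\sqrt{2n-4}}{9^{3}\lambda}\le \sum_{u\in L^{\lambda}}d_{G}(u)\le \sum_{v\in V(G)}d_{G}(v)=2|E(G)|\le 6n-12<6n .
\]
Rearranging gives $|L^{\lambda}|<\dfrac{6\cdot 9^{3}\lambda n}{\sqrt{2n-4}}$, an inequality that holds uniformly in $\lambda$, so no case split on the magnitude of $\lambda$ is required.

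Finally I would discharge the constants using $n\ge 2.67\times 9^{17}$. This forces $2n-4\ge 36\cdot 9^{16}$ (in fact $n\ge 2\cdot 9^{17}+2$ already gives $2n-4\ge 36\cdot 9^{16}$ with equality), hence $\sqrt{2n-4}\ge 6\cdot 9^{8}$ and therefore $\dfrac{6\cdot 9^{3}}{\sqrt{2n-4}}\le \dfrac{1}{9^{5}}$. Substituting into the previous display yields $|L^{\lambda}|<\dfrac{\lambda n}{9^{5}}$, which is the claimed bound.

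The step I would watch most carefully is this final constant-chasing. A more wasteful estimate of $\sum_{u\in L^{\lambda}}d_{G}(u)$ — for instance splitting it as $2e(L^{\lambda})+e(L^{\lambda},V(G)\setminus L^{\lambda})$ and applying \eqref{equ::2} — introduces an extra additive term of order $6|L^{\lambda}|$. Carrying that term through weakens the required threshold to roughly $n\ge \tfrac{32}{9}\cdot 9^{17}\approx 3.56\times 9^{17}$, which would \emph{exceed} the assumed $2.67\times 9^{17}$ and break the argument. Invoking the global bound $|E(G)|\le 3n-6$ directly, rather than the edge/cross-edge split, is precisely what keeps the stated range of $n$ sufficient, so this is the one place where the choice of estimate genuinely matters.
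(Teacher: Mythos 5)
Your proof is correct and takes essentially the same route as the paper: the eigen-equation gives $d_G(u)\ge \frac{\sqrt{2n-4}}{9^3\lambda}$ for every $u\in L^{\lambda}$, and summing this against the global planar degree bound $\sum_{v}d_G(v)\le 2(3n-6)$ yields the claim. The only cosmetic difference is that the paper rewrites $6n-12=3(2n-4)$ to obtain $|L^{\lambda}|\le 3\times 9^{3}\lambda\sqrt{2n-4}$ before invoking $n\ge 2.67\times 9^{17}$, whereas you bound $\sqrt{2n-4}\ge 6\cdot 9^{8}$ directly; both constant checks are valid.
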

        \begin{proof}
By assumption,  $\rho \geq \sqrt{2 n-4}$, and hence 
$$
\frac{\sqrt{2 n-4}}{9^{3} \lambda} \leq \rho x_{u}=\sum_{v \in N_{G}(u)} x_{v} \leq d_{G}(u)
$$
for each $u \in L^{\lambda}$. Summing this inequality over all vertices $u \in L^{\lambda}$, we obtain

$$
\frac{\sqrt{2 n-4}}{9^{3} \lambda}\cdot |L^{\lambda}|  \leq \sum_{u \in L^{\lambda}} d_{G}(u) \leq \sum_{u \in V(G)} d_{G}(u) \leq 2(3 n-6),
$$
which implies  that $|L^{\lambda}| \leq 3 \times 9^{3} \lambda \sqrt{2 n-4} \leq \frac{\lambda n}{9^{5}}$ as $n \geq 2.67 \times 9^{17}$.
        \end{proof}

    \begin{lem}\label{lem::3}
$|L^{1}| < 6 \times 9^{4}$.
    \end{lem}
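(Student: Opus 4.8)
The plan is to avoid any second-neighbourhood expansion — summing $x_u\cdot\rho^2x_u$ over $L^1$ only reproduces the eigenvalue identity and yields no information — and instead to bound $|L^1|$ directly by summing the eigen-equation over $L^1$ and invoking planarity twice. Write $m=|L^1|$ and $s=\sqrt{2n-4}$. The first ingredient is a global bound on the total eigenvector mass $S:=\sum_{v\in V(G)}x_v$: summing $\rho x_v=\sum_{w\in N_G(v)}x_w$ over all $v$ gives $\rho S=\sum_{w}d_G(w)x_w\le\sum_w d_G(w)\le 6n-12$ (using $x_w\le 1$ and planarity), so $S\le (6n-12)/\rho\le 3s$ because $\rho\ge s$. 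This $O(\sqrt n)$ control of the total mass is precisely what will force $|L^1|$ down to a constant.

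Next I would set up a two-sided estimate of $\sum_{u\in L^1}\rho x_u$. On one side, since $x_u\ge 1/9^3$ for every $u\in L^1$,
\[
\sum_{u\in L^1}\rho x_u\ \ge\ \frac{\rho}{9^3}\,|L^1|\ \ge\ \frac{s}{9^3}\,m .
\]
On the other side, counting from the neighbours,
\[
\sum_{u\in L^1}\rho x_u=\sum_{u\in L^1}\sum_{v\in N_G(u)}x_v=\sum_{v\in V(G)}d_{L^1}(v)\,x_v,
\]
where $d_{L^1}(v)=|N_G(v)\cap L^1|$. I split this sum according to whether $v\in L^1$ or not. The internal part is harmless: $\sum_{v\in L^1}d_{L^1}(v)x_v\le\sum_{v\in L^1}d_{L^1}(v)=2e(L^1)\le 6m-12$ by \eqref{equ::2}.

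The crux is the external part $\sum_{v\notin L^1}d_{L^1}(v)x_v$, and here planarity enters in two different guises. For the vertices with $d_{L^1}(v)\le 2$ I use the mass bound rather than the (too weak) pointwise estimate $x_v<1/9^3$: their total contribution is at most $2\sum_{v\notin L^1}x_v\le 2S\le 6s$, with no factor of $n$ appearing because the multiplier $d_{L^1}(v)$ is capped at $2$. For the vertices with $d_{L^1}(v)\ge 3$ I instead use that there are very few of them: the bipartite planar graph between $L^1$ and $C:=\{v\notin L^1:d_{L^1}(v)\ge 3\}$ satisfies $3|C|\le e(L^1,C)\le 2(m+|C|)-4$ by \eqref{equ::2}, whence $|C|\le 2m-4$, so their contribution is at most $\tfrac1{9^3}e(L^1,C)\le\tfrac1{9^3}\big(2(m+|C|)-4\big)<6m/9^3$.

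Putting the pieces together gives $\tfrac{s}{9^3}m<6m+6m/9^3+6s$, i.e. $m\big(\tfrac{s}{9^3}-6-\tfrac6{9^3}\big)<6s$. Since $n\ge 2.67\times 9^{17}$ makes $s=\sqrt{2n-4}$ enormous compared with $9^3$, the bracket exceeds $\tfrac12\cdot\tfrac{s}{9^3}$, and solving yields $m<12\cdot 9^3<6\times 9^4$, which is the desired bound (with room to spare). I expect the only delicate point to be the external low-degree sum: the whole argument hinges on the observation that capping $d_{L^1}(v)$ at $2$ lets the global mass bound $S=O(\sqrt n)$ absorb those terms, while a separate planar-bipartite count confines the high-degree vertices to a set of size $O(m)$. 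Replacing either device by the crude bound $x_v<1/9^3$ reintroduces an $O(n/9^3)$ term and only proves the much weaker $|L^1|=O(\sqrt n)$.
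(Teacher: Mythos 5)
Your proof is correct, and it takes a genuinely different route from the paper's. The paper establishes the lemma through a pointwise degree bound: it expands $\rho^2 x_u$ over second neighbourhoods (inequality \eqref{equ::7}, which itself rests on Lemma \ref{lem::2}) and argues by contradiction that $d_G(u)\geq n/9^4$ for every $u\in L^1$, then sums degrees to get $|L^1|\cdot n/9^4\leq 2e(G)<6n$. You never leave the first-order eigen-equation: you sum $\rho x_u$ over $L^1$, bound the sum below by $\rho m/9^3$, and bound it above via the double count $\sum_{v}d_{L^1}(v)x_v$, handling the three pieces with planarity (the internal term $2e(L^1)\leq 6m-12$; the confinement $|C|\leq 2m-4$ and $e(L^1,C)\leq 6m-12$ for external vertices of $L^1$-degree at least $3$) and with the global mass bound $\sum_v x_v\leq 3\sqrt{2n-4}$ for external vertices of $L^1$-degree at most $2$. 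All the individual estimates check out, including the decisive observation that capping $d_{L^1}(v)$ at $2$ lets the $O(\sqrt{n})$ total mass absorb those terms without any factor of $n$; the final arithmetic gives $m<12\cdot 9^3=8748$, in fact slightly sharper than the stated $6\times 9^4$. Your argument is also self-contained: it uses neither Lemma \ref{lem::2} nor the machinery of \eqref{equ::3}--\eqref{equ::7}, and it needs $n$ only large enough that $\sqrt{2n-4}$ exceeds roughly $12\cdot 9^3$, far weaker than $n\geq 2.67\times 9^{17}$. What the paper's heavier route buys is reuse: inequality \eqref{equ::7} is the engine for Lemma \ref{lem::4} ($d_G(u)\geq(x_u-\frac{4}{729})n$), which your argument does not produce, so in the paper's architecture that machinery must be developed anyway; but as a proof of Lemma \ref{lem::3} alone, yours is more elementary. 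The only pedantic caveat is that $e(L^1)\leq 3m-6$ and $e(L^1,C)\leq 2(m+|C|)-4$ require $m\geq 3$ (respectively $m+|C|\geq 3$), which hold unless $m\leq 2$, in which case the lemma is trivial; the paper glosses over the same degenerate cases in \eqref{equ::2}.
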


  \begin{proof}
Let $u$ be an arbitrary vertex of $G$. For convenience, we denote $ L_{i}^{\lambda}(u)=N_{i}(u) \cap L^{\lambda}$ and $\overline{L_{i}^{\lambda}}(u)=N_{i}(u) \setminus L^{\lambda}$. Recall that $\rho \geq \sqrt{2 n-4}$. Then
\begin{equation}\label{equ::3}
\begin{aligned}
(2 n-4) x_{u} &\leq \rho^{2} x_{u}=d_{G}(u) x_{u}+\sum_{v \in N_{1}(u)} \sum_{w \in N_{1}(v) \setminus \{u\}} x_{w}\\
&\leq d_{G}(u) x_{u}+\sum_{v \in N_{1}(u)} \sum_{w \in L_{1}^{\lambda}(u) \cup L_{2}^{\lambda}(u)} x_{w} +\sum_{v \in N_{1}(u)} \sum_{w\in \overline{L_{1}^{\lambda}}(u) \cup \overline{L_{2}^{\lambda}}(u)} x_{w},
\end{aligned}
\end{equation}
where the last inequality follows from the fact that $N_{1}(v) \setminus \{u\} \subseteq N_{1}(u) \cup N_{2}(u)=L_{1}^{\lambda}(u) \cup L_{2}^{\lambda}(u) \cup \overline{L_{1}^{\lambda}}(u) \cup \overline{L_{2}^{\lambda}}(u)$. 

Note that $N_{1}(u)=L_{1}^{\lambda}(u) \cup \overline{L_{1}^{\lambda}}(u)$ and $x_{w} \leq 1$ for each $w \in L_{1}^{\lambda}(u) \cup L_{2}^{\lambda}(u)$. We have
\begin{equation}\label{equ::4}
\begin{aligned}
\sum_{v \in N_{1}(u)} \sum_{w \in L_{1}^{\lambda}(u) \cup L_{2}^{\lambda}(u)} x_{w}&=\sum_{v \in L_{1}^{\lambda}(u)} \sum_{w \in L_{1}^{\lambda}(u) \cup L_{2}^{\lambda}(u)} x_{w}
+\sum_{v \in \overline{L_{1}^{\lambda}}(u)} \sum_{w \in L_{1}^{\lambda}(u) \cup L_{2}^{\lambda}(u)} x_{w}\\
&\leq\left(2 e(L_{1}^{\lambda}(u))+e(L_{1}^{\lambda}(u), L_{2}^{\lambda}(u))\right)+\sum_{v \in \overline{L_{1}^{\lambda}}(u)} \sum_{w \in L_{1}^{\lambda}(u) \cup L_{2}^{\lambda}(u)} x_{w}
\end{aligned}
\end{equation}
Recall that  $L_{1}^{\lambda}(u) \cup L_{2}^{\lambda}(u) \subseteq L^{\lambda}$, and  $\left|L^{\lambda}\right| \leq \frac{\lambda n}{9^{5}}$ by Lemma \ref{lem::2}. Then from \eqref{equ::2} we obtain
\begin{equation}\label{equ::5}
\begin{aligned}
2 e(L_{1}^{\lambda}(u))+e(L_{1}^{\lambda}(u), L_{2}^{\lambda}(u)) &\leq 2(3|L_{1}^{\lambda}(u)|-6)+(2(|L_{1}^{\lambda}(u)|+|L_{2}^{\lambda}(u)|)-4)\\
&<8|L^{\lambda}|\leq \frac{8 \lambda n}{9^{5}}.
\end{aligned}
\end{equation}
Also note that $x_{w} < \frac{1}{9^{3} \lambda}$ for each $w \in \overline{L_{1}^{\lambda}}(u) \cup \overline{L_{2}^{\lambda}}(u)$. Then

\begin{equation}\label{equ::6}
\begin{aligned}
    \sum_{v \in N_{1}(u)} \sum_{w \in \overline{L_{1}^{\lambda}}(u) \cup \overline{L_{2}^{\lambda}}(u)} x_{w}&=\sum_{v \in L_{1}^{\lambda}(u) \cup \overline{L_{1}^{\lambda}}(u)} \sum_{w \in \overline{L_{1}^{\lambda}}(u) \cup \overline{L_{2}^{\lambda}}(u)} x_{w}\\
    &\leq\left(e(L_{1}^{\lambda}(u), \overline{L_{1}^{\lambda}}(u) \cup \overline{L_{2}^{\lambda}}(u))+2 e(\overline{L_{1}^{\lambda}}(u))+e(\overline{L_{1}^{\lambda}}(u), \overline{L_{2}^{\lambda}}(u))\right)\cdot \frac{1}{9^{3} \lambda}\\
    &\leq \min\left\{2|{L_{1}^{\lambda}}(u)|+10|\overline{L_{1}^{\lambda}}(u)|+4|\overline{L_{2}^{\lambda}}(u)|,6n\right\}\cdot \frac{1}{9^{3} \lambda},
\end{aligned}
\end{equation}
where the last inequality follows from 
 $$e(L_{1}^{\lambda}(u), \overline{L_{1}^{\lambda}}(u) \cup \overline{L_{2}^{\lambda}}(u))+2 e(\overline{L_{1}^{\lambda}}(u))+e(\overline{L_{1}^{\lambda}}(u), \overline{L_{2}^{\lambda}}(u)) \leq 2 e(G)<6 n$$ 
and 
$$
\begin{aligned}
   &~~~~e(L_{1}^{\lambda}(u), \overline{L_{1}^{\lambda}}(u) \cup \overline{L_{2}^{\lambda}}(u))+2 e(\overline{L_{1}^{\lambda}}(u))+e(\overline{L_{1}^{\lambda}}(u), \overline{L_{2}^{\lambda}}(u))\\
   &< 2(|{L_{1}^{\lambda}}(u)|+|\overline{L_{1}^{\lambda}}(u)|+|\overline{L_{2}^{\lambda}}(u)|)+6|\overline{L_{1}^{\lambda}}(u)|+2(|\overline{L_{1}^{\lambda}}(u)|+|\overline{L_{2}^{\lambda}}(u)|)\\
    & =2|{L_{1}^{\lambda}}(u)|+10|\overline{L_{1}^{\lambda}}(u)|+4|\overline{L_{2}^{\lambda}}(u)|.
\end{aligned}     
$$
Combining \eqref{equ::3}--\eqref{equ::6}, we obtain
\begin{equation}\label{equ::7}
\begin{aligned}
    (2 n-4) x_{u}&<d_{G}(u) x_{u}+\sum_{v \in \overline{L_{1}^{\lambda}}(u)} \sum_{w \in L_{1}^{\lambda}(u) \cup L_{2}^{\lambda}(u)} x_{w}\\
    &~~~~+\min\left\{\left(\frac{8 \lambda}{9}+\frac{54}{\lambda}\right)n,\frac{8n \lambda}{9}+\frac{9(2|{L_{1}^{\lambda}}(u)|+10|\overline{L_{1}^{\lambda}}(u)|+4|\overline{L_{2}^{\lambda}}(u)|)}{\lambda}\right\}\cdot \frac{1}{9^{4}}.
\end{aligned}
\end{equation}

Now we prove that $d_{G}(u) \geq \frac{n}{9^{4}}$ for each $u \in L^{1}$. Suppose to the contrary that there exists a vertex $\widetilde{u} \in L^{1}$ such that $d_{G}(\widetilde{u})<\frac{n}{9^{4}}$. Note that $x_{\widetilde{u}} \geq \frac{1}{9^{3}}$ because $\widetilde{u} \in L^{1}$. Take $u=\widetilde{u}$ and $\lambda=9$. Since $N_{1}(\widetilde u)={L_{1}^{9}}(\widetilde u) \cup \overline{L_{1}^{9}}(\widetilde u)$, we have
$$
    2|{L_{1}^{9}}(\widetilde u)|+10|\overline{L_{1}^{9}}(\widetilde u)|+4|\overline{L_{2}^{9}}(\widetilde u)|
    \leq 10d_{G}(\widetilde{u})+4|\overline{L_{2}^{9}}(\widetilde u)|<\frac{10n}{9^{4}}+4n.
$$
and it follows from \eqref{equ::7} that
\begin{equation}\label{equ::8}
\frac{2 n-4}{9^{3}}<d_{G}(\widetilde{u}) x_{\widetilde{u}}+\sum_{v \in \overline{L_{1}^{9}}(\widetilde u)} \sum_{w \in L_{1}^{9}(\widetilde u )\cup L_{2}^{9}(\widetilde u)} x_{w}+\left(12+\frac{10}{9^{4}}\right)\cdot\frac{ n}{9^{4}}.
\end{equation}
Moreover,  since $\left|N_{1}(\widetilde u)\right|=d_{G}(\widetilde{u})<\frac{n}{9^{4}}$ and $\left|L^{9}\right| \leq \frac{n}{9^{4}}$, we have
$$
\begin{aligned}
d_{G}(\widetilde{u}) x_{\widetilde{u}}+\sum_{v \in \overline{L_{1}^{9}}(\widetilde u)} \sum_{w \in L_{1}^{9}(\widetilde u )\cup L_{2}^{9}(\widetilde u)} x_{w}&\leq d_{G}(\widetilde{u})+e(\overline{L_{1}^{9}}(\widetilde u), L_{1}^{9}(\widetilde u) \cup L_{2}^{9}(\widetilde u))\\
&<d_{G}(\widetilde{u})+2(|\overline{L_{1}^{9}}(\widetilde u)|+|L_{1}^{9}(\widetilde u)|+|L_{2}^{9}(\widetilde u)|)\\
&\leq 3d_{G}(\widetilde{u})+2|L^{9}|\\
 &\leq \frac{5 n}{9^{4}},
\end{aligned}
$$
where the second inequality follows from \eqref{equ::2}. Then \eqref{equ::8} implies that
$$
\frac{2 n-4}{9^{3}}<\frac{5 n}{9^{4}}+\left(12+\frac{10}{9^{4}}\right)\cdot\frac{ n}{9^{4}}=\left(17+\frac{10}{9^{4}}\right)\cdot\frac{ n}{9^{4}},
$$
which is a contradiction. Therefore, $d_{G}(u) \geq \frac{n}{9^{4}}$ for all $u \in L^{1}$. Summing this inequality over all vertices $u \in L^{1}$, we obtain

$$
\left|L^{1}\right| \frac{n}{9^{4}} \leq \sum_{u \in L^{1}} d_{G}(u) \leq 2 e(G) < 6 n,
$$
and hence $\left|L^{1}\right| < 6 \times 9^{4}$. The result follows.
    \end{proof}
For convenience, we use $L$, $L_{i}(u)$ and $\overline{L_{i}}(u)$ instead of $L^{1}, N_{i}(u) \cap L^{1}$ and $N_{i}(u) \setminus  L^{1}$, respectively. 
   
    \begin{lem}\label{lem::4}
   	For every $u \in L$, we have $d_{G}(u) \geq\left(x_{u}-\frac{4}{729}\right) n$.
    \end{lem}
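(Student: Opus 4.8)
The plan is to square the eigenvalue equation at $u$ and bound the resulting sum over two-step walks. Writing $W_2(u)=\sum_{v\in N(u)}\sum_{w\in N(v)\setminus\{u\}}x_w$, the identity $\rho^2 x_u=\sum_{v\in N(u)}\rho x_v=d_G(u)x_u+W_2(u)$ together with $\rho^2\ge 2n-4$ yields
$$(2n-4)x_u\le d_G(u)x_u+W_2(u),$$
so that $d_G(u)\ge (2n-4)-W_2(u)/x_u$. Everything therefore reduces to a sufficiently sharp upper bound on $W_2(u)$; the target $d_G(u)\ge (x_u-\tfrac{4}{729})n$ corresponds to needing roughly $W_2(u)\lesssim (2-x_u)x_u\,n$, which at $x_u=1$ forces $W_2(u)\lesssim n$.

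First I would split the endpoint $w$ of each two-step walk according to whether $w\in L$ or $w\notin L$. For $w\notin L$ we have $x_w<\tfrac{1}{9^3}$, so this part is at most $\tfrac{1}{9^3}\sum_{v\in N(u)}(d_G(v)-1)$. Applying the planar estimates $e(S)\le 3|S|-6$ and $e(S,T)\le 2(|S|+|T|)-4$ from \eqref{equ::2} with $S=N(u)$ gives $\sum_{v\in N(u)}d_G(v)=2e(N(u))+e(N(u),V(G)\setminus N(u))\le 2n+6d_G(u)-16$, so the low-weight part contributes at most $\tfrac{1}{9^3}(2n+5d_G(u)-16)$, that is, an $O(n/9^3)$ error plus a controlled multiple of $d_G(u)$.

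The crux is the high-weight part. Reindexing by $w$, it equals $\sum_{w\in L\setminus\{u\}}x_w\,|N_G(u)\cap N_G(w)|\le \sum_{w\in L\setminus\{u\}}|N_G(u)\cap N_G(w)|$. Here $|L|$ is bounded by an absolute constant (Lemma \ref{lem::3}), but a single common neighbourhood can have size about $n$, so the naive bound $|L|\cdot d_G(u)$ is hopeless; even the bipartite planar bound $e(N(u),L)\le 2(d_G(u)+|L|)-4$, with coefficient $2$ on $d_G(u)$, turns out to be too weak to reach $(x_u-\tfrac{4}{729})n$. The key is that planar graphs contain no $K_{3,3}$: for any two $w_i,w_j\in L\setminus\{u\}$, at most two vertices of $N(u)$ can be adjacent to both, since three such vertices together with $\{u,w_i,w_j\}$ would form a $K_{3,3}$. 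Writing $t_0=|L\setminus\{u\}|$ and $d_H(y)=|N_G(y)\cap(L\setminus\{u\})|$ for $y\in N(u)$, this says $\sum_{y\in N(u)}\binom{d_H(y)}{2}\le 2\binom{t_0}{2}=t_0(t_0-1)$; combined with the elementary $k\le\binom{k}{2}+1$ it gives $\sum_{w\in L\setminus\{u\}}|N_G(u)\cap N_G(w)|=\sum_{y\in N(u)}d_H(y)\le d_G(u)+t_0(t_0-1)$, a bound with coefficient exactly $1$ on $d_G(u)$. This $K_{3,3}$ refinement is the main obstacle; obtaining coefficient $1$ rather than $2$ is precisely what makes the estimate close.

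Finally I would substitute both bounds into $(2n-4)x_u\le d_G(u)x_u+W_2(u)$, obtaining a linear inequality $(2n-4)x_u-\tfrac{2n}{9^3}-c_1\le d_G(u)\bigl(x_u+1+\tfrac{5}{9^3}\bigr)$ with $c_1=O(|L|^2)$, and solve for $d_G(u)$. Setting $\epsilon=\tfrac{1}{9^3}$, checking $\tfrac{(2n-4)x_u-2\epsilon n-c_1}{x_u+1+5\epsilon}\ge (x_u-4\epsilon)n$ reduces, after clearing the positive denominator, to verifying $n\bigl(x_u(1-x_u)+\epsilon(2-x_u)+20\epsilon^2\bigr)\ge 4x_u+c_1$; since $x_u\in[0,1]$ the bracket is at least $\epsilon$, and the hypothesis $n\ge 2.67\times 9^{17}$ dwarfs the constant terms $c_1=O(9^8)$ and $4x_u\le 4$, so the inequality holds for every $u\in L$. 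The only nonroutine ingredient is the planarity ($K_{3,3}$) estimate on the common neighbourhoods of the $L$-vertices.
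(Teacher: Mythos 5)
Your proposal is correct and takes essentially the same route as the paper's proof: square the eigenvalue equation at $u$, split the two-step walk endpoints according to membership in $L$, bound the light part via the planar edge estimates \eqref{equ::2}, and use $K_{3,3}$-freeness to show the heavy part costs only $d_G(u)$ plus a negligible additive constant --- precisely the role of the paper's bound \eqref{equ::8_1}, where the set of vertices of $N_1(u)$ with at least two neighbours in $L_{1}(u)\cup L_{2}(u)$ is shown to have size at most $|L|^{2}$ by the same three-common-neighbours argument. Your per-pair double count $\sum_{y\in N(u)}\binom{d_H(y)}{2}\le 2\binom{t_0}{2}$ is a slightly sharper bookkeeping of the identical $K_{3,3}$ obstruction (additive error $O(|L|^{2})$ instead of the paper's $O(|L|^{3})$), but both errors are absorbed by $n\ge 2.67\times 9^{17}$, so the two proofs are the same in substance.
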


\begin{proof}
Let ${\overline{L_{1}}}^{\prime}(u)$ be the subset of $\overline{L_{1}}(u)$ in which each vertex has at least two neighbors in $L_{1}(u) \cup L_{2}(u)$. We claim that $|{\overline{L_{1}}}^{\prime}(u)| \leq |L_{1}(u) \cup L_{2}(u)|^{2}$. If $|L_{1}(u) \cup L_{2}(u)|=1$, then ${\overline{L_{1}}}^{\prime}(u)=\emptyset$, as desired. Now assume that $|L_{1}(u) \cup L_{2}(u)| \geq 2$. Suppose to the contrary that $|\overline{L_{1}}^{\prime}(u)|>|L_{1}(u) \cup L_{2}(u)|^{2}$. Since there are only $\tbinom{|L_{1}(u) \cup L_{2}(u)|}{2}$ options for vertices in ${\overline{L_{1}}}^{\prime}(u)$ to choose  two neighbors from $L_{1}(u) \cup L_{2}(u)$, we can find two vertices in $L_{1}(u) \cup L_{2}(u)$ with at least $\lceil{|{\overline{L_{1}}}^{\prime}(u)|/\tbinom{|L_{1}(u) \cup L_{2}(u)|}{2}}\rceil\geq 3$ common neighbors in ${\overline{L_{1}}}^{\prime}(u)$. Also note that $u \notin L_{1}(u) \cup L_{2}(u)$ and ${\overline{L_{1}}}^{\prime}(u) \subseteq \overline{L_{1}}(u) \subseteq N_{1}(u)$. Then we see that $G$ contains a copy of $K_{3,3}$, which is impossible because $G$ is planar. Therefore, $|{\overline{L_{1}}}^{\prime}(u)| \leq |L_{1}(u) \cup L_{2}(u)|^{2}$, and
\begin{equation}\label{equ::8_1}
\begin{aligned}
e(\overline{L_{1}}(u), L_{1}(u) \cup L_{2}(u)) &=e(\overline{L_{1}}(u) \setminus  {\overline{{L_{1}}}^{\prime}}(u), L_{1}(u) \cup L_{2}(u))+e({\overline{{L_{1}}}^{\prime}}(u), L_{1}(u) \cup L_{2}(u))\\
&\leq|\overline{L_{1}}(u) \setminus  {\overline{{L_{1}}}^{\prime}}(u)|+|L_{1}(u) \cup L_{2}(u)|\cdot|{\overline{L_{1}}}^{\prime}(u)| \\
&\leq d_{G}(u)+(6 \times 9^{4})^{3} \\
&\leq d_{G}(u)+\frac{n}{729},
\end{aligned}
\end{equation}
where the penultimate inequality follows from $\overline{L_{1}}(u)\subseteq N_1(u)$ and  $|L_{1}(u) \cup L_{2}(u)| \leq|L| \leq 6 \times 9^{4}$ (by Lemma \ref{lem::3}), and the last inequality holds as $n \geq 2.67 \times 9^{17}$. Putting $\lambda=1$ in \eqref{equ::7} and combining it with \eqref{equ::8_1}, we obtain
$$
(2 n-4) x_{u} \leq d_{G}(u)+\left(d_{G}(u)+\frac{n}{729}\right)+\frac{55 n}{9^{4}},
$$
and therefore,  
$$d_{G}(u) \geq(n-2) x_{u}-\frac{64 n}{2 \times 9^{4}} \geq\left(x_{u}-\frac{4}{729}\right) n,$$
as desired.
\end{proof}

Take $u^{\prime}\in V(G)$ such that $x_{u^{\prime}}=\max_{v\in V(G)} x_{v}=1$. We have the following result.
    \begin{lem}\label{lem::5}
    There exists some vertex $u^{\prime \prime} \in L_{1}(u^{\prime}) \cup L_{2}(u^{\prime})$ such that $x_{u^{\prime \prime}} \geq \frac{722}{734}$.
    \end{lem}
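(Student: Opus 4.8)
The plan is to argue by contradiction: I would assume that \emph{every} vertex $w\in L_{1}(u')\cup L_{2}(u')$ satisfies $x_{w}<\tfrac{722}{734}$ and show that this forces the walk inequality \eqref{equ::7} to fail once $n$ is large. The first thing to note is that any vertex with eigenvector entry at least $\tfrac{722}{734}$ automatically lies in $L^{1}$, since $\tfrac{722}{734}>\tfrac{1}{9^{3}}$; hence a vertex at distance at most $2$ from $u'$ with entry $\ge\tfrac{722}{734}$ is automatically a member of $L_{1}(u')\cup L_{2}(u')$, and so refuting the contrary hypothesis above is exactly what is needed. Since $x_{u'}=1$, we also have $u'\in L^{1}=L$, which legitimizes reusing the estimates established for vertices of $L$.

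I would then apply \eqref{equ::7} with $u=u'$ and $\lambda=1$. As $x_{u'}=1$, the leading term is $d_{G}(u')\le n-1$, and the trailing minimum term is at most $\bigl(\tfrac{8}{9}+54\bigr)n\cdot\tfrac{1}{9^{4}}<\tfrac{55n}{9^{4}}$, exactly as in the proof of Lemma \ref{lem::4}. The decisive term is the middle one, $\sum_{v\in\overline{L_{1}}(u')}\sum_{w\in L_{1}(u')\cup L_{2}(u')}x_{w}$. Under the contrary hypothesis each inner summand obeys $x_{w}<\tfrac{722}{734}$, so this term is bounded by $\tfrac{722}{734}\,e\bigl(\overline{L_{1}}(u'),L_{1}(u')\cup L_{2}(u')\bigr)$. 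Here I would invoke the planar ($K_{3,3}$-free) counting bound \eqref{equ::8_1}, valid for any vertex of $L$, which gives $e\bigl(\overline{L_{1}}(u'),L_{1}(u')\cup L_{2}(u')\bigr)\le d_{G}(u')+\tfrac{n}{729}$.

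Substituting these three estimates into \eqref{equ::7} and using $d_{G}(u')<n$ would yield
\[
2n-4< n+\frac{722}{734}\Bigl(n+\frac{n}{729}\Bigr)+\frac{55n}{9^{4}}=\Bigl(1+\frac{722}{734}\cdot\frac{730}{729}+\frac{55}{9^{4}}\Bigr)n<1.994\,n,
\]
where the bracketed coefficient evaluates to roughly $1.9934$, strictly below $2$. Consequently $(2-1.994)n<4$, which bounds $n$ by a small absolute constant and contradicts $n\ge 2.67\times 9^{17}$. Therefore some $u''\in L_{1}(u')\cup L_{2}(u')$ must satisfy $x_{u''}\ge\tfrac{722}{734}$, as claimed.

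I expect the only genuine work to be the arithmetic confirming that the coefficient of $n$ stays strictly under $2$: the threshold $\tfrac{722}{734}$ is tuned precisely so that $\tfrac{722}{734}\cdot\tfrac{730}{729}+\tfrac{55}{9^{4}}<1$ holds, and the resulting gap (of order $0.006\,n$) is exactly what the hypothesis $n\ge 2.67\times 9^{17}$ must overcome. No step is conceptually difficult once \eqref{equ::7} and \eqref{equ::8_1} are in hand; the care lies in keeping the constants sharp enough that the contradiction survives.
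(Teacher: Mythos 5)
Your proof is correct, and it deviates from the paper's in one substantive detail worth noting. Both arguments hinge on \eqref{equ::7} with $u=u'$ and $\lambda=1$, and both reduce the middle double sum to (largest entry) $\times$ $e\bigl(\overline{L_{1}}(u'),L_{1}(u')\cup L_{2}(u')\bigr)$; your contradiction formulation is logically equivalent to the paper's averaging (pigeonhole) step, so that difference is cosmetic. The genuine divergence is in how the edge count is bounded. The paper first invokes Lemma \ref{lem::4} to get $d_{G}(u')\geq\frac{725n}{729}$, deduces $d_{\overline{L_{1}}(u')}(u')\geq\frac{724n}{729}$, and subtracts this from the planar bipartite bound $e(\overline{L_{1}}(u'),L)\leq 2n-4$ of \eqref{equ::2}, arriving at $e(\overline{L_{1}}(u'),L_{1}(u')\cup L_{2}(u'))\leq\frac{734n}{729}$; combined with the lower bound $\sum_{v}\sum_{w}x_{w}\geq\frac{722n}{729}$ extracted from \eqref{equ::7}, averaging gives the constant $\frac{722}{734}$. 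You instead recycle \eqref{equ::8_1} from the proof of Lemma \ref{lem::4} (whose derivation is indeed valid for $u'$, needing only $K_{3,3}$-freeness, Lemma \ref{lem::3}, and $n\geq 2.67\times 9^{17}$), giving $e(\overline{L_{1}}(u'),L_{1}(u')\cup L_{2}(u'))\leq d_{G}(u')+\frac{n}{729}\leq\frac{730n}{729}$. This bypasses the degree \emph{lower} bound of Lemma \ref{lem::4} altogether (you only use the trivial $d_{G}(u')\leq n-1$) and is in fact marginally sharper than the paper's $\frac{734n}{729}$; your arithmetic checks out, since $\frac{722}{734}\cdot\frac{730}{729}+\frac{55}{9^{4}}\approx 0.9934<1$, leaving a gap of order $0.0066\,n$ against the additive constant $4$. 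One trivial imprecision: a vertex at distance ``at most $2$'' from $u'$ with a large entry should read distance exactly $1$ or $2$ (i.e., in $N_{1}(u')\cup N_{2}(u')$), since $u'$ itself is excluded from $L_{1}(u')\cup L_{2}(u')$; this plays no role in the argument.
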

    \begin{proof}
Putting $u=u^{\prime}$ and $\lambda=1$ in \eqref{equ::7}, we have

$$
2 n-4<d_{G}(u^{\prime})+\sum_{v \in \overline{L_{1}}(u^{\prime})} \sum_{w \in L_{1}(u^{\prime}) \cup L_{2}(u^{\prime})} x_{w}+\frac{55 n}{9^{4}},
$$
which implies that
\begin{equation}\label{equ::8_2}
\sum_{v \in \overline{L_{1}}(u^{\prime})} \sum_{w \in L_{1}(u^{\prime}) \cup L_{2}(u^{\prime})} x_{w} \geq 2 n-4-\frac{55 n}{9^{4}}-d_{G}(u^{\prime})\geq 2 n-4-\frac{55 n}{9^{4}}-n+1 \geq \frac{722 n}{729}.
\end{equation}
As $u^\prime\in L$, by  Lemma \ref{lem::4}, we have $d_{G}(u^{\prime}) \geq \frac{725 n}{729}$. Denote by $N_{{L_{1}}(u^{\prime})}(u^{\prime})=N_{G}(u^{\prime}) \cap L_{1}(u^{\prime})$ and $d_{{L_{1}}(u^{\prime})}(u^{\prime})=|N_{{L_{1}}(u^{\prime})}(u^{\prime})|$. Then 
$$d_{{L_{1}}(u^{\prime})}(u^{\prime}) \leq |L_{1}(u^{\prime})| \leq |L| \leq 6 \times 9^{4}\leq \frac{n}{729}$$ as $n \geq 2.67 \times 9^{17}$, and it follows that 
$$
d_{{\overline{L_{1}}}(u^{\prime})}(u^{\prime})=d_{G}(u^{\prime})-d_{{L_{1}}(u^{\prime})}(u^{\prime}) \geq  \frac{724 n}{729}.
$$
Combining this with (\ref{equ::2}), we obtain
\begin{equation}\label{equ::8_3}
e(\overline{L_{1}}(u^{\prime}), L_{1}(u^{\prime}) \cup L_{2}(u^{\prime})) \leq e(\overline{L_{1}}(u^{\prime}), L)-d_{{\overline{L_{1}}}(u^{\prime})}(u^{\prime}) \leq(2 n-4)-\frac{724 n}{729} \leq \frac{734 n}{729}.
\end{equation}
According to \eqref{equ::8_2} and \eqref{equ::8_3}, there exists some $u^{\prime \prime} \in L_{1}(u^{\prime}) \cup L_{2}(u^{\prime})$ such that
$$
x_{u^{\prime \prime}} \geq \frac{\sum_{v \in \overline{L_{1}}(u^{\prime})} \sum_{w \in L_{1}(u^{\prime}) \cup L_{2}(u^{\prime})} x_{w}}{e(\overline{L_{1}}(u^{\prime}), L_{1}(u^{\prime}) \cup L_{2}(u^{\prime}))} \geq \frac{\frac{722 n}{729}}{\frac{734 n}{729}} \geq \frac{722}{734},
$$
as desired.
    \end{proof}

Recall that $x_{u^{\prime}}=1$ and $x_{u^{\prime \prime}} \geq \frac{722}{734}$ (by Lemma \ref{lem::5}). By Lemma \ref{lem::4}, we have

\begin{equation}\label{equ::9}
    d_{G}(u^{\prime}) \geq \frac{725n}{729}  ~\mbox{and}~ d_{G}(u^{\prime \prime}) \geq 0.978 n.
\end{equation}
Let $D=\{u^{\prime}, u^{\prime \prime}\}$, $R=N_{G}(u^{\prime}) \cap N_{G}(u^{\prime \prime})$, and $R_{1}=V(G) \setminus (D \cup R)$. Then
$$|R_{1}| \leq(n-d_{G}(u^{\prime}))+(n-d_{G}(u^{\prime \prime})) \leq 0.0275n.$$ 
Next we shall prove that the entries of the eigenvector $X$ with respect to the vertices in $R \cup R_{1}$ are small.
    
    \begin{lem}\label{lem::6}
    For every $u \in R \cup R_{1}$, we have $d_{R}(u) \leq 2$ and $x_{u} \leq 0.124$.
    \end{lem}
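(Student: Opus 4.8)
The plan is to handle the two assertions separately, deriving both from planarity (specifically the absence of a $K_{3,3}$ subgraph) together with the degree estimate of Lemma~\ref{lem::4}. Throughout I fix $u \in R \cup R_1$ and recall that $R = N_G(u') \cap N_G(u'')$, $D = \{u', u''\}$ and $R_1 = V(G)\setminus(D\cup R)$, so that $N_G(u)$ is partitioned into $N_D(u)$, $N_R(u)$ and $N_{R_1}(u)$, and that by \eqref{equ::9} both $u'$ and $u''$ have degree close to $n$, whence $|R_1| \le 0.0275n$.

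For the bound $d_R(u) \le 2$, I would argue by contradiction. Suppose $u$ had three distinct neighbours $w_1, w_2, w_3 \in R$. By the definition of $R$ each $w_i$ is adjacent to both $u'$ and $u''$, and by assumption each $w_i$ is adjacent to $u$; moreover $u, u', u''$ are pairwise distinct (as $u \notin D$) and none of them equals any $w_i$ (since $w_i \in R$ is disjoint from $D$, while $w_i \sim u$ forces $w_i \ne u$). Hence the two triples $\{u, u', u''\}$ and $\{w_1, w_2, w_3\}$ span all nine cross edges, producing a copy of $K_{3,3}$ in $G$, contradicting planarity. Crucially, this argument uses only the common-neighbour property of $R$ and never requires $u$ itself to be adjacent to $u'$ or $u''$, so it applies uniformly to $u \in R$ and to $u \in R_1$. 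Therefore $d_R(u) \le 2$.

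For the eigenvector bound I would combine the degree decomposition with Lemma~\ref{lem::4}. Since $N_G(u)$ splits as above, $d_G(u) = |N_D(u)| + d_R(u) + |N_{R_1}(u)| \le 2 + 2 + |R_1| \le 4 + 0.0275n$, using $|N_D(u)| \le |D| = 2$, the just-proved $d_R(u) \le 2$, and $|N_{R_1}(u)| \le |R_1|$. If $u \notin L$ then $x_u < \tfrac{1}{9^3} < 0.124$ by the definition of $L = L^1$, and we are done; otherwise $u \in L$, and Lemma~\ref{lem::4} rearranges to $x_u \le d_G(u)/n + \tfrac{4}{729} \le 0.0275 + \tfrac{4}{n} + \tfrac{4}{729}$, which lies below $0.124$ once $n$ is large (indeed it is already below $0.033$). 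In either case $x_u \le 0.124$.

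The only routine part is the numerics; the sole genuinely structural step is the $K_{3,3}$ extraction, and the point to be careful about is that it must be phrased through the common-neighbour property of $R$ rather than any adjacency of $u$ to $D$, so that it covers $R_1$ as well. I expect the constant $0.124$ to be comfortably non-tight — the argument in fact yields $x_u \le 0.033 + o(1)$ — the extra slack presumably being retained for use in the subsequent lemmas.
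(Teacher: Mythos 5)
Your proposal is correct, and it splits into one part that coincides with the paper and one that genuinely departs from it. The bound $d_R(u)\le 2$ is proved in the paper by the same $K_{3,3}$ observation (stated there in one line, "since otherwise $G$ would contain a copy of $K_{3,3}$"); your explicit extraction via the triples $\{u,u',u''\}$ and $\{w_1,w_2,w_3\}$, using only the common-neighbour property of $R$ so that $u\in R_1$ is covered too, is exactly the intended argument. For the eigenvector bound, however, the paper takes a different route: it first bounds the total weight of $R_1$, via $\rho\sum_{u\in R_1}x_u\le\sum_{u\in R_1}d_G(u)\le 3|R_1|+2e(R_1)\le 9|R_1|\le 0.2475n$ (using $e(R_1)\le 3|R_1|$ from planarity of $G[R_1]$), and then applies the eigen-equation at $u$ to get $\rho x_u\le 4+0.2475n/\rho$, hence $x_u\le \frac{4}{\sqrt{2n-4}}+\frac{0.2475n}{2n-4}\le 0.124$ --- so $0.124$ is essentially tight for the paper's own derivation, not slack deliberately left over, contrary to your closing speculation. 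Your alternative --- splitting on whether $u\in L$, using the definition of $L^1$ in one case and rearranging Lemma \ref{lem::4} against the structural degree bound $d_G(u)\le 2+2+|R_1|\le 4+0.0275n$ in the other --- is valid (Lemma \ref{lem::4} precedes Lemma \ref{lem::6}, so there is no circularity, and both routes rest on the same input $|R_1|\le 0.0275n$ from \eqref{equ::9}). It even yields the stronger constant $\approx 0.033$. What each buys: your argument is shorter and sharper but leans on the $L$/Lemma \ref{lem::4} machinery; the paper's is a self-contained eigen-equation estimate whose intermediate quantity $\sum_{u\in R_1}x_u$ is controlled directly, and its weaker constant $0.124$ is all that the later computations (e.g., the constants $1.868$ and $4.496$) require.
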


    \begin{proof}
Note that $d_{D}(u) =2$ for $u\in R$ and $d_{D}(u) \leq 1$ for $u\in R_1$. Also, for any vertex $u \in R\cup R_{1}$, we assert that $d_{R}(u) \leq 2$, since otherwise $G$ would contain a copy of $K_{3,3}$, which is impossible.  Note that $\left|R_{1}\right| \leq 0.0275n$ and $e(R_{1}) \leq 3|R_{1}|$ by (\ref{equ::2}). Then we have
$$
\begin{aligned}
\rho \sum_{u \in R_{1}} x_{u} &\leq \sum_{u \in R_{1}} d_{G}(u) =\sum_{u \in R_{1}} (d_{D}(u)+d_{R}(u)+d_{R_{1}}(u)) \leq \sum_{u \in R_{1}}(3+d_{R_{1}}(u)) \\
&\leq 3|R_{1}|+2 e(R_{1}) \leq 9|R_{1}| \leq 0.2475n,
\end{aligned}
$$
which gives that
$$\sum_{u \in R_{1}} x_{u} \leq \frac{0.2475 n}{ \rho}.$$ 
For any $u\in R\cup R_1$,
$$
\rho x_{u}=\sum_{v \in N_{G}(u)} x_{v} =\sum_{v \in N_{D}(u)} x_{v}+\sum_{v \in N_{R}(u)} x_{v}+\sum_{v \in N_{R_{1}}(u)} x_{v} \leq 4+\frac{0.2475 n}{\rho}.
$$
It follows that
$$x_{u} \leq \frac{4}{\rho}+\frac{0.2475 n}{ \rho^{2}}\leq \frac{4}{\sqrt{2 n-4}}+\frac{0.2475n}{2n-4}\leq 0.124$$
 as $\rho \geq \sqrt{2 n-4}$ and $n\geq2.67\times9^{17}$.
    \end{proof}
    
\begin{lem}\label{lem::7}
The subgraph $G[R]$ of $G$ induced by $R$ is a disjoint union of some paths and cycles. Moreover, if $G[R]$ contains a cycle then it is exactly a cycle, and if $u^{\prime}u^{\prime\prime} \in E(G)$ then $G[R]$ is a disjoint union of some paths. 
\end{lem}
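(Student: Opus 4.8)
The plan is to obtain the coarse structure directly from Lemma~\ref{lem::6} and then rule out the two forbidden configurations using planarity, each time by exhibiting a $K_5$.

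First I would prove the opening assertion. By Lemma~\ref{lem::6}, every vertex $u\in R$ satisfies $d_R(u)\le 2$, so $G[R]$ has maximum degree at most $2$. A graph of maximum degree at most $2$ is a vertex-disjoint union of paths (counting isolated vertices as degenerate paths) and cycles, which is exactly the first claim.

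Next I would handle the claim that a cycle in $G[R]$ forces $G[R]$ to be a single cycle. Suppose $G[R]$ contains a cycle $C=v_1v_2\cdots v_kv_1$ with $k\ge 3$. Since $R=N_G(u')\cap N_G(u'')$, both $u'$ and $u''$ are adjacent to every $v_i$, so $C$ together with $u',u''$ forms the $k$-gonal bipyramid as a subgraph of $G$. Assume for contradiction that there is a vertex $w\in R\setminus V(C)$; then $w$ is adjacent to both $u'$ and $u''$ as well. Contracting $C$ into three nonempty consecutive arcs yields three vertices $V_1,V_2,V_3$ forming a triangle, each adjacent to $u'$ and $u''$; together with the path $u'\!-\!w\!-\!u''$ this gives a $K_5$ on $\{u',u'',V_1,V_2,V_3\}$, a $K_5$-minor contradicting planarity. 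Hence $R=V(C)$, and since $G[R]$ has maximum degree at most $2$ it has no chord, so $G[R]$ is a single spanning cycle, i.e.\ $G[R]\cong C_{|R|}$.

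For the last claim, assume $u'u''\in E(G)$ and, for contradiction, that $G[R]$ still contains a cycle $C=v_1\cdots v_k$. As before $u'$ and $u''$ are adjacent to all $v_i$. Contracting $C$ to a triangle $V_1V_2V_3$ makes each $V_j$ adjacent to both $u'$ and $u''$, and together with the present edge $u'u''$ and the triangle $V_1V_2V_3$ this is exactly $K_5$ on $\{u',u'',V_1,V_2,V_3\}$, again contradicting planarity. So $G[R]$ is cycle-free, and by the first assertion it is a disjoint union of paths.

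The only step requiring care is the planarity argument, and I would make sure the cycle-contraction genuinely produces a $K_5$-minor for every $k\ge 3$. An equivalent and perhaps more transparent route uses the $3$-connectivity of the bipyramid $\{u',u''\}+C$ together with Whitney's uniqueness theorem: its unique embedding has only triangular faces, none of which contains both apexes $u'$ and $u''$, and the cycle $C$ separates $u'$ from $u''$. The first fact forbids inserting a further common neighbour $w$ of $u',u''$, and the second forbids drawing the edge $u'u''$; these give exactly the two contradictions above. I expect this embedding/minor verification to be the main obstacle, as everything else follows immediately from Lemma~\ref{lem::6}.
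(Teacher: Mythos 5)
Your proposal is correct and takes essentially the same route as the paper: both obtain the path/cycle decomposition from the degree bound $d_R(u)\le 2$ (Lemma \ref{lem::6}, i.e.\ $K_{3,3}$-freeness), and both rule out the forbidden configurations by a $K_5$-minor formed by contracting the cycle in $G[R]$ to a triangle and supplying the fifth adjacency $u'u''$ either through a vertex of $R$ outside the cycle or through the edge $u'u''$ itself. The only cosmetic difference is that the paper contracts an entire second component of $G[R]$ to a single vertex where you contract through one vertex $w\in R\setminus V(C)$; both are valid.
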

\begin{proof}
Since $G$ is $K_{3,3}$-minor-free, we see that $G[R]$ is $K_{1,3}$-free, and so has maximum degree at most two. This implies that $G[R]$ must be a disjoint union of some paths and cycles. Furthermore, if $G[R]$ contains a cycle and has at least two components, then we can choose two components $H_1$ and $H_2$ from $G[R]$ such that $H_1$ is a cycle and $H_2$ is a cycle or a path. After contracting $H_1$ into a triangle, contracting $H_2$ into a  vertex $x$, and contracting $u^{\prime}xu^{\prime\prime}$ into an edge $u^{\prime}u^{\prime\prime}$, we will obtain a copy of $K_5$. This indicates that $G$ contains a  $K_{5}$-minor, which is impossible. Therefore, if $G[R]$ contains a cycle, then it is exactly a cycle. Also, if $u^{\prime}u^{\prime\prime} \in E(G)$ and $G[R]$ contains a cycle, then after contracting the cycle into a triangle, we will obtain a copy of $K_5$, which is a contradiction. Therefore, if $u^{\prime}u^{\prime\prime} \in E(G)$, then $G[R]$ is a disjoint union of some paths. 
\end{proof}

If $G$ contains $K_{2,n-2}$ as a subgraph, then we can say more about the components of the eigenvector $X$ of $\rho:=\rho(G)$.

\begin{lem}\label{lem::8}
Suppose further that $G$ contains  $K_{2,n-2}$ as a subgraph. Let  $u_1,u_2$ be the two vertices of $G$ that have degree $n-2$ in $K_{2,n-2}$. Then the following two statements hold.
\begin{enumerate}[(i)]
    \item   $x_{u_1}=x_{u_2}=1$. 
    \item For any vertex $u \in V(G)\setminus\{u_1,u_2\}$,  $\frac{2}{\rho}\leq x_u\leq \frac{2}{\rho}+ \frac{4.496}{\rho ^{2}}$.

\end{enumerate}
\end{lem}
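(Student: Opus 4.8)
The plan is to exploit the rigid neighborhood structure forced by the embedded $K_{2,n-2}$ together with planarity, and then to run a short eigenvalue‑equation argument. Write $W=V(G)\setminus\{u_1,u_2\}$, so $|W|=n-2$ and, by the definition of $u_1,u_2$, both $u_1$ and $u_2$ are adjacent to every vertex of $W$.

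First I would record a structural fact: the induced subgraph $G[W]$ has maximum degree at most $2$. Indeed, if some $u\in W$ had three neighbors $a,b,c\in W$, then $\{u_1,u_2,u\}$ together with $\{a,b,c\}$ would span a $K_{3,3}$ in $G$, contradicting planarity. Consequently every $u\in W$ satisfies $d_G(u)\le 4$ (its two ``centers'' $u_1,u_2$ plus at most two neighbors inside $W$), so from $\rho x_u=\sum_{v\in N_G(u)}x_v$ and $x_v\le 1$ we get $x_u\le 4/\rho$. Since $\rho\ge\sqrt{2n-4}$ with $n\ge 2.67\times 9^{17}$, this bound is far below $1$.

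For part (i), the previous paragraph shows that the maximum entry $\max_v x_v=1$ cannot be attained in $W$, hence it is attained at $u_1$ or $u_2$. Now set $T=\sum_{v\in W}x_v$ and note that $N_G(u_1)\supseteq W$ and $N_G(u_2)\supseteq W$, while the only possible neighbor of $u_i$ outside $W$ is the other center. Thus the eigenvalue equations read $\rho x_{u_1}=T+\varepsilon x_{u_2}$ and $\rho x_{u_2}=T+\varepsilon x_{u_1}$, where $\varepsilon\in\{0,1\}$ records whether $u_1u_2\in E(G)$. Subtracting gives $(\rho+\varepsilon)(x_{u_1}-x_{u_2})=0$, so $x_{u_1}=x_{u_2}$; since one of them equals $1$, both do. This handles both cases $u_1u_2\in E(G)$ and $u_1u_2\notin E(G)$ uniformly, and since $G\supseteq K_{2,n-2}$ the ``in particular'' claim is automatic.

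For part (ii) the lower bound is immediate: $u_1,u_2\in N_G(u)$ for every $u\in W$, so by part (i) $\rho x_u=\sum_{v\in N_G(u)}x_v\ge x_{u_1}+x_{u_2}=2$, i.e.\ $x_u\ge 2/\rho$. For the upper bound I would sharpen the crude estimate by a one‑step bootstrap. Put $M=\max_{v\in W}x_v$. Each $v\in W$ contributes $x_{u_1}+x_{u_2}=2$ from its centers and at most two neighbors inside $W$ (each at most $M$), so $\rho x_v\le 2+2M$; maximizing over $v$ gives $\rho M\le 2+2M$, i.e.\ $M\le 2/(\rho-2)$. Feeding this back, $\rho x_u\le 2+2M\le 2+4/(\rho-2)$, whence $x_u\le \frac{2}{\rho}+\frac{4}{\rho(\rho-2)}$, and $\frac{4}{\rho(\rho-2)}\le \frac{4.496}{\rho^2}$ holds once $\rho\ge 18.2$, which is guaranteed. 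The only point requiring care is precisely this last step: the crude bound $x_v\le 1$ is too weak to reach the constant $4.496$, so the bootstrap yielding $M\le 2/(\rho-2)$ is essential, and it is the (mild) main obstacle of the proof.
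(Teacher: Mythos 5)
Your proof is correct and follows essentially the same route as the paper's: both rest on the planarity fact $d_W(u)\le 2$ for $u\in W$ (via the $K_{3,3}$ obstruction), the eigen-equation $\rho x_u=x_{u_1}+x_{u_2}+\sum_{w\in N_W(u)}x_w$, and a one-step bootstrap of the crude bound $x_u\le 4/\rho$. There are two small points of divergence, both to your credit or at least harmless. For (i), where the paper simply invokes ``the symmetry of $u_1$ and $u_2$,'' you make this rigorous by subtracting the two eigen-equations to obtain $(\rho+\varepsilon)(x_{u_1}-x_{u_2})=0$; this is exactly the computation the paper's phrase is gesturing at, so you have filled in a detail rather than changed the argument. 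For the upper bound in (ii), the paper expands $\rho\sum_{w\in N_R(u)}x_w$ over second neighbours and uses the crude bound $x_v\le 0.124$ to get $2\times(2+0.124+0.124)=4.496$ directly, whereas you solve the self-consistent inequality $\rho M\le 2+2M$ to get $M\le 2/(\rho-2)$ and then verify $4/(\rho(\rho-2))\le 4.496/\rho^2$ once $\rho\ge 18.13$, which holds comfortably since $\rho\ge\rho(K_{2,n-2})=\sqrt{2n-4}$ and $n\ge 2.67\times 9^{17}$. Your bootstrap never needs the numerical constant $0.124$ and for large $\rho$ yields the marginally stronger bound $x_u\le 2/\rho+4/(\rho(\rho-2))$; the paper's version produces the constant $4.496$ with no side condition on $\rho$. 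Either way the target inequality follows, so the proposal is a valid proof of the lemma.
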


\begin{proof}
Since $K_{2,n-2}$ is a  subgraph of $G$, we have $\rho(G)\geq \rho(K_{2,n-2})=\sqrt{2n-4}$. Let $R=N_{G}(u_1)\cap N_{G}(u_2)=V(G)\setminus\{u_1,u_2\}$.  Note that $d_{R}(u)\leq 2$ for any $u\in R$ as $G$ is $K_{3,3}$-minor-free.

(i) For any $u\in R=V(G)\setminus\{u_1,u_2\}$, we have $\rho x_u = \sum_{v\in N_{G}(u)}{x_v}\leq 4$, and it follows that  $x_u\leq \frac{4}{\rho} \leq \frac{4}{\sqrt{2n-4}}<0.124$ as $n\geq 2.67\times 9^{17}$. Since $\max_{v\in V(G)}x_v=1$, by the  symmetry of $u_1$ and $u_2$, we must have  $x_{u_1}=x_{u_2}=1$, as desired.

(ii) For any  $u \in R=V(G)\setminus\{u_1,u_2\}$, 
    \begin{equation}\label{equ::15}
         \rho x_u= x_{u_1}+x_{u_2}+\sum_{w\in N_{R}(u)}{x_w}=2+\sum_{w\in N_{R}(u)}{x_w},
    \end{equation}
which gives that  $x_u\geq \frac{2}{\rho}$.  On the other hand, since $d_R(v)\leq 2$ and $x_v\leq 0.124$ for every $v\in R$, we have 
$$ \rho \sum_{w\in N_{R}(u)}{x_w}= \sum_{w\in N_{R}(u)}{\rho x_w}= \sum_{w\in N_{R}(u)}{\sum_{y \in N_{G}(w)}{x_y}} \leq 2\times(2+0.124+0.124)=4.496.$$
Combining this with \eqref{equ::15}, we obtain $x_u\leq \frac{2}{\rho}+ \frac{4.496}{\rho ^{2}}$, as required.
\end{proof}

 \section{A structural theorem for the spectral extremal graph of $F$-free planar graphs}\label{sec::3}
 
In this section, we shall prove Theorem \ref{thm::1}, which describes a rough structure for the connected extremal graphs in $\mathrm{SPEX}_{\mathcal{P}}(n, F)$ under the condition that $F$ is a planar graph not contained in $K_{2,n-2}$ and $n\geq \max\{2.67\times9^{17},\frac{10}{9}|V(F)|\}$. The main result is as follows.

{\flushleft \it Proof of Theorem \ref{thm::1}.} Since $K_{2, n-2}$ is a connected $F$-free planar graph and $G$ is a connected extremal graph in  $\mathrm{SPEX}_{\mathcal{P}}(n, F)$, we have  $\rho \geq \rho(K_{2, n-2})=\sqrt{2 n-4}$. Combining this with $n\geq 2.67\times9^{17}$, we see that the results in Lemmas \ref{lem::2}--\ref{lem::7} hold. Let $u^{\prime}$, $u^{\prime\prime}$, $D$, $R$ and $R_1$ be defined as in Section \ref{sec::2}. Note that (ii) follows from Lemma \ref{lem::7} and (i) immediately. Thus it suffices to prove (i).

First we shall show that  $R_{1}=\emptyset$. By contradiction, suppose that  $a:=|R_{1}|>0$. Since $F$ is a planar graph with $|V(F)|\leq \frac{9n}{10}$, from (\ref{equ::9}) we obtain
\begin{equation}\label{equ::24}
\begin{aligned}
    &|R|=|N_{G}(u^{\prime}) \cap N_{G}(u^{\prime \prime})| \geq|N_{G}(u^{\prime})|+|N_{G}(u^{\prime \prime})|-n\geq 0.972 n>|V(F)|, 
 \end{aligned}
\end{equation}
and 
\begin{equation*}\label{equ::12}
|R_1|=n-|R|-2 \leq n-0.972n-2<0.028n.
\end{equation*}
Therefore,
\begin{equation}\label{equ::25}
\frac{|R|}{|R_1|}> \frac{0.972n}{0.028n} \geq 34.
\end{equation}
    Since $G[R_{1}]$ is planar, we can order the vertices of $R_1$ as $v_1,\ldots,v_a$ such that $v_1\in R_1$ satisfies $d_{R_1}(v_1)\leq 5$ and $v_i\in R_i:=R_1\setminus\{v_1,\ldots,v_{i-1}\}$ satisfies $d_{R_1\setminus\{v_1,\ldots,v_{i-1}\}}(v_i)\leq 5$ for $i\in\{2,\ldots,a\}$ (if $a\geq 2$).  By Lemma \ref{lem::6}, for every $i\in\{1,\ldots,a\}$, 
\begin{equation}\label{equ::26}
  \sum_{w \in N_{D \cup R \cup R_{i}}(v_{i})} x_{w} \leq 1+\sum_{w \in N_{R}(v_{i})} x_{w}+\sum_{w \in N_{R_{i}}(v_{i})} x_{w} \leq 1+2\times 0.124+5\times 0.124=1.868.  
\end{equation} 
Also note that $\cup_{i=1}^{a}\{w v_{i} \mid w \in N_{D \cup R \cup R_{i}}(v_{i})\}$ is exactly the set of edges in $G$ that is incident with some vertex of $R_{1}$. We consider the following  two cases. 

{\flushleft {\it Case 1.} $u^{\prime}u^{\prime\prime} \in E(G)$.}

In this case, by Lemma \ref{lem::7},  $G[R]$ is a disjoint union of some paths. Let $G^{\prime}:=G-\sum_{i=1}^{a} \sum_{w \in N_{D \cup R \cup R_{i}}(v_{i})} {wv_{i}}+\sum_{i=1}^{a}{(v_iu^{\prime}+v_iu^{\prime \prime})}$. Clearly, $G^{\prime}$ is a connected subgraph of $K_2+P_{n-2}$, and so is planar. Recall that $x_{u^\prime}=1$ and $x_{u^{\prime\prime}}\geq \frac{722}{734}>0.983$. According to \eqref{equ::26}, we obtain
$$
\rho(G^{\prime})-\rho \geq \frac{2}{X^{T} X} \sum_{i=1}^{a} x_{v_{i}}\left(x_{u^{\prime}}+x_{u^{\prime \prime}}-\sum_{w \in N_{D \cup R \cup R_{i}}(v_{i})} x_{w}\right)>0,
$$
and hence $\rho(G^{\prime})>\rho$. Now we shall show that $G^{\prime}$ is $F$-free. Suppose to the contrary that $G^{\prime}$ contains $F$ as a subgraph. Clearly, $V(F) \cap R_{1}\neq\emptyset$. As $|R|>|V(F)|$ by (\ref{equ::24}), we have
\begin{equation}\label{equ::27}
|R \setminus  V(F)|=|R|-|R \cap V(F)|>|V(F)|-|V(F) \cap R| \geq|V(F) \cap R_{1}|.
\end{equation}
Suppose that $V(F) \cap R_{1}=\{v_{i_1}, \ldots, v_{i_b}\}$. Take $w_{1}, \ldots, w_{b}\in R \setminus  V(F)$. Then we see that   $N_{G^{\prime}}(v_{i_j})=D \subseteq N_{G^{\prime}}(w_{j})$ for all $j \in\{1, \ldots, b\}$, and $G$ has already contained a copy of $F$, which is impossible. Thus we may conclude that $G'$ is an $n$-vertex connected $F$-free planar graph with  $\rho(G^{\prime})>\rho$. However, this is impossible by the maximality of $\rho$. 

{\flushleft {\it Case 2.} $u^{\prime}u^{\prime\prime} \notin E(G)$.}

In this case, by Lemma \ref{lem::7}, $G[R]$ is a disjoint union of some paths and cycles. If $G[R]$ is a disjoint union of some paths, let  $G^{\prime}=G-\sum_{i=1}^{a} \sum_{w \in N_{D \cup R \cup R_{i}}(v_{i})} {wv_{i}}+\sum_{i=1}^{a}{(v_iu^{\prime}+v_iu^{\prime \prime})}$. As in Case 1, we see that $G^\prime$ is a connected $F$-free planar graph with $\rho(G')>\rho$, which is a contradiction. Thus $G[R]$ contains a cycle, and  so must be a cycle according to Lemma \ref{lem::7}. We have the following claim.

\begin{claim}\label{claim::3.1}
Let $v$ be a vertex of $R_1$ with  $d_{R}(v)=2$. Then the two neighbors of $v$ in $R$ are adjacent.
\end{claim}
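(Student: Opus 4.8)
The plan is to derive a contradiction from planarity by contracting one of the two edges incident with $v$, thereby turning $v$'s pair of edges into a single ``forbidden'' chord. First I would record that in this case $u'u''\notin E(G)$ and, by Lemma \ref{lem::7}, $G[R]\cong C_{n-2}$; since $R=N_G(u')\cap N_G(u'')$, both $u'$ and $u''$ are joined to every vertex of $R$. Hence $H:=G[D\cup R]$ has $n$ vertices and exactly $(n-2)+(n-2)+(n-2)=3n-6$ edges (the cycle on $R$, the edges from $u'$, and the edges from $u''$, with $u'u''$ absent). As $H$ is planar and attains the bound $e(H)\le 3n-6$ from \eqref{equ::2} with equality, it is edge-maximal planar; consequently adding any edge between two non-adjacent vertices of $H$ produces a simple graph on $n$ vertices with $3n-5>3n-6$ edges, which is non-planar.

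Next I would argue by contradiction. Suppose the two neighbours $a,b\in R$ of $v$ are non-adjacent in $G[R]$ (equivalently, in $H$). Using $va,vb\in E(G)$, I would contract the edge $va$: this identifies $v$ with $a$, and the former edge $vb$ becomes an edge joining the image of $a$ to $b$. Thus the minor $G/va$ contains, on the vertex set $D\cup R$, all edges of $H$ together with the chord $ab$; that is, it contains $H+ab$ as a subgraph. Since $a$ and $b$ are non-adjacent in $H$, the graph $H+ab$ is non-planar by the previous step, so $G/va$ is non-planar. This contradicts the planarity of $G$, because every minor of a planar graph is planar; hence $a$ and $b$ must be adjacent, which proves the claim.

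The crux of the argument --- and the step I expect to be the only real obstacle --- is recognizing that $H$ is already a triangulation, so that merely creating one extra chord between non-adjacent cycle vertices destroys planarity. This is exactly where the Case 2 hypothesis $u'u''\notin E(G)$ enters: it is what keeps $e(H)=3n-6$, leaving precisely the room for the edge-count bound to be decisive. A direct edge count on $G[D\cup R\cup\{v\}]$ would not suffice, since two extra edges on one extra vertex still respect $3(n+1)-6$; it is the contraction of $va$, deleting a vertex while preserving the effective chord $ab$, that makes the bound bite. An equivalent but more topological route would use that $H$ is $3$-connected and therefore has an essentially unique planar embedding in which every face is a triangle spanned by two consecutive vertices of $C_{n-2}$; then $v$ lies in a single such face, and its two edges force both $a$ and $b$ onto that face, so they are consecutive.
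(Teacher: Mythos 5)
Your proof is correct, but it proceeds by a genuinely different mechanism than the paper's. The paper exhibits a $K_5$-minor directly: writing $N_R(v)=\{u_1,u_2\}$ and letting $P,P'$ be the two arcs of the cycle $G[R]$ between $u_1$ and $u_2$, it contracts the path $u_1vu_2$ to the chord $u_1u_2$, contracts the cycle $P\cup u_1u_2$ to a triangle, and contracts $u'\overline{u}u''$ (for an internal vertex $\overline{u}$ of $P'$) to an edge, producing $K_5$. You avoid any explicit Kuratowski/Wagner configuration: you observe that $H=G[D\cup R]$, being the double wheel $2K_1+C_{|R|}$ (here the Case 2 hypothesis $u'u''\notin E(G)$ is used), meets the planar edge bound \eqref{equ::2} with equality, hence is edge-maximal planar; therefore the single contraction $G/va$, which retains all of $H$ and adds the chord $ab$, is already non-planar, contradicting the fact that minors of planar graphs are planar. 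Your route buys a shorter, more mechanical verification --- one contraction plus an edge count, with no need to locate the auxiliary vertex $\overline{u}$ or to check that the arcs have length at least $2$ --- and it isolates the role of $u'u''\notin E(G)$ as exactly what makes $H$ a triangulation; the paper's route buys an explicit $K_5$-minor, in the same style as Lemma \ref{lem::7}, keeping the forbidden-minor obstruction visible. Both arguments are fully rigorous.

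One slip you should correct, though it is harmless: Claim \ref{claim::3.1} is invoked precisely while the enclosing argument assumes $R_1\neq\emptyset$, so $H$ has $n-|R_1|<n$ vertices and $G[R]$ is a cycle of length $|R|<n-2$, not $C_{n-2}$, and your count ``$(n-2)+(n-2)+(n-2)=3n-6$'' is literally false as stated. Fortunately the computation is robust: setting $m:=|R|$, one has $e(H)=3m=3(m+2)-6=3|V(H)|-6$, so $H$ is still a triangulation on $|V(H)|\geq 3$ vertices, and the remainder of your argument goes through verbatim with $m$ in place of $n-2$.
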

\begin{proof}
Let $N_{R}(v)=\{u_1,u_2\}$.  Suppose to the contrary that $u_1$ and $u_2$ are not adjacent. Then $u_1$ and $u_2$ are two non-consecutive vertices on the cycle $G[R]$. Let $P$ and $P^{\prime}$ be the two internally vertex-disjoint paths of length at least $2$ between $u_1$ and $u_2$ along this cycle. After contracting $u_{1}vu_2$ into an edge $u_{1}u_{2}$, contracting the resulting cycle $P\cup u_{1}u_{2}$ into a triangle, picking an internal vertex $\overline{u}$ from $P^{\prime}$, and contracting   $u^{\prime}\overline{u}u^{\prime\prime}$ into an edge $u^{\prime}u^{\prime\prime}$, we will obtain a copy of $K_5$. This implies that $G$ has a $K_{5}$-minor, contrary to our assumption.
\end{proof}

{\flushleft {\it Subcase 2.1.} There exists some vertex $v_r\in R_1$ adjacent to  $u^{\prime}$ or $u^{\prime\prime}$.}

Since $v_r$ is adjacent to $u^{\prime}$ or $u^{\prime\prime}$, we have 
$\rho x_{v_r}\geq \min\{x_{u^{\prime}},x_{u^{\prime\prime}}\}\geq \frac{722}{734}>0.983$, and it follows from Lemma \ref{lem::1} that
\begin{equation}\label{equ::28}
 x_{v_r} > \frac{0.983}{\rho} \geq \frac{0.983}{2+\sqrt{2n-6}}.
\end{equation}
On the other hand, by Lemma \ref{lem::6}, (\ref{equ::25}) and Claim \ref{claim::3.1}, we can easily find two adjacent vertices $u_1,u_2\in R$ such that $N_G(u_1)\cap R_1=N_G(u_2)\cap R_1=\emptyset$. Then, again by Lemma \ref{lem::6}, we obtain 
\begin{equation*}\label{equ::17}
    \rho x_{u_i}= x_{u^{\prime}} + x_{u^{\prime\prime}} +\sum_{w \in N_{R}(u_i)}x_w \leq 1+1+2 \times 0.124 = 2.248~~\mbox{for $i \in \{1,2\}$}.
\end{equation*}
Combining this with $\rho \geq\sqrt{2 n-4}$ yields that
\begin{equation}\label{equ::29}
     x_{u_i} \leq \frac{2.248}{\rho} \leq \frac{2.248}{\sqrt{2n-4}}~~\mbox{for $i \in \{1,2\}$}.
\end{equation}
Let $G^{\prime}=G-u_{1}u_{2}-\sum_{i=1}^{a} \sum_{w \in N_{D \cup R \cup R_{i}}(v_{i})} {wv_{i}}+\sum_{i=1}^{a}(v_iu^{\prime}+v_iu^{\prime \prime})$. Clearly, $G^{\prime}$ is a connected subgraph of $2K_1+P_{n-2}$, and so is planar. Recall that $x_{u^\prime}=1$ and $x_{u^{\prime\prime}}\geq \frac{722}{734}>0.983$.  Combining (\ref{equ::26}), (\ref{equ::28}) and (\ref{equ::29}), we get

$$
\begin{aligned}
    \rho(G^{\prime})-\rho & \geq \frac{2}{X^{T} X} \left(\sum_{i=1}^{a} x_{v_{i}}\left( x_{u^{\prime}}+x_{u^{\prime \prime}}-\sum_{w \in N_{D \cup R \cup R_{i}}(v_{i})} x_{w}\right)-x_{u_1}x_{u_2}\right) \\
    &\geq \frac{2}{X^{T} X} \left(\sum_{i=1}^{a} x_{v_{i}}(1+0.983-1.868)-\frac{2.248^{2}}{2n-4}\right)\\
    &\geq \frac{2}{X^{T} X} \left( x_{v_{r}}\times 0.115-\frac{2.248^{2}}{2n-4}\right)\\
    & \geq \frac{2}{X^{T} X} \left(\frac{0.983\times 0.115}{2+\sqrt{2n-6}}-\frac{2.248^{2}}{2n-4}\right)\\
    & >0
\end{aligned}
$$
as $\rho \geq \sqrt{2 n-4}$ and $n\geq 2.67 \times 9^{17}$. Therefore, $\rho(G')>\rho$. Furthermore, as in Case 1, we see that $G^{\prime}$ is $F$-free, which is impossible by the maximality of $\rho=\rho(G)$.

{\flushleft {\it Subcase 2.2.} $N_{R_1}(u^{\prime})=N_{R_1}(u^{\prime\prime})=\emptyset$.}

In this situation, by Lemma \ref{lem::6}, we obtain
\begin{equation}\label{equ::30}
    \sum_{w \in N_{D \cup R \cup R_{i}}(v_{i})} x_{w} =\sum_{w \in N_{R}(v_{i})} x_{w}+\sum_{w \in N_{R_{i}}(v_{i})} x_{w} <2\times 0.124+5\times 0.124=0.868.  
\end{equation}
Let $G^{\prime}=G-\sum_{i=1}^{a} \sum_{w \in N_{D \cup R \cup R_{i}}(v_{i})} {wv_{i}}+\sum_{i=1}^{a}{v_{i}u^{\prime}}$. Clearly, $G^{\prime}$ is a connected planar graph. Recall that $x_{u^\prime}=1$. According to (\ref{equ::30}), we have
$$
\rho(G^{\prime})-\rho \geq \frac{2}{X^{T} X} \sum_{i=1}^{a} x_{v_{i}}\left(x_{u^{\prime}}-\sum_{w \in N_{D \cup R \cup R_{i}}(v_{i})} x_{w}\right)>0,
$$
and hence $\rho(G^\prime)>\rho$. As in Case 1, we see that $G^{\prime}$ is $F$-free, which is  impossible by the maximality of $\rho=\rho(G)$.

According to the above discussions, we obtain $R_1=\emptyset$, and so $R=N_G(u^\prime)\cap N_G(u^{\prime\prime})=V(G)\setminus\{u^\prime,u^{\prime\prime}\}$. Moreover, by the symmetry of $u^\prime$ and $u^{\prime\prime}$, we can deduce that $x_{u^\prime}=x_{u^{\prime\prime}}=1$. In particular,  $G$ contains a copy of $K_{2,n-2}$. This proves (i).\qed

\section{Proof of Theorems \ref{thm::2}--\ref{thm::4}}\label{sec::4}

In this section, by using the structure theorem for the spectral extremal graph of $F$-free planar  graphs obtained in Section \ref{sec::3}, we shall give the proof of Theorems \ref{thm::2}--\ref{thm::4}. More specifically, we characterize the unique extremal graph in $\mathrm{SPEX}_{\mathcal{P}}(n, W_k)$, $\mathrm{SPEX}_{\mathcal{P}}(n, F_k)$ and $\mathrm{SPEX}_{\mathcal{P}}(n, (k+1)K_2)$, respectively, provided that $n$ is  sufficiently large relative to $k$. To achieve this goal, we first need to discuss the change of the spectral radius of a special kind of planar graphs under specified graph transformation. 

Suppose that $H$ is a disjoint union of at least two paths. Let $P_{s_1}$ and $P_{s_2}$ be any two components of $H$ with $s_1\geq s_2$. Then the $(s_{1}, s_{2})$-\textit{transformation} $H^*$ of $H$ is defined by
$$
H^{*}:=\begin{cases}P_{s_{1}+1} \cup P_{s_{2}-1} \cup H_0, & \mbox{if } s_{2} \geq 2, \\ P_{s_{1}+s_{2}} \cup H_0, & \mbox{if } s_{2}=1,\end{cases}
$$
where $H_0$ is the union of the components of $H$ other than $P_{s_1}$ and $P_{s_2}$.
Clearly, both $K_2+H$ and $K_{2}+H^{*}$ are connected planar graphs. The following lemma indicates that $\rho(K_{2}+H)<\rho(K_{2}+H^*)$ under certain conditions.

\begin{lem}\label{lem::9}
Let $H$ be a graph on $n-2$ vertices of which all components are paths. Suppose that $P_{s_1}$ and $P_{s_2}$ are two components of $H$ with $s_1\geq s_2$. If $n \geq \max\{2.67\times 9^{17},10.2 \times 2^{s_{2}}+2\}$, then $\rho(K_2+H)<\rho(K_2+H^*)$, where $H^*$ is  the $(s_{1}, s_{2})$-\textit{transformation} of $H$.
\end{lem}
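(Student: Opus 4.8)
The plan is to reduce the comparison of $\rho(K_2+H)$ and $\rho(K_2+H^*)$ to a single \emph{secular equation} in the variable $\rho$, rather than to attempt a direct edge-swap estimate with the Perron vector of $K_2+H$ (which, as I explain at the end, does not work). Write $K_2=\{u',u''\}$; in either graph $u'$ and $u''$ are adjacent to each other and to every vertex of the path-forest, so they are interchangeable and the Perron vector $X$ satisfies $x_{u'}=x_{u''}=:p>0$. The eigen-equation at $u'$ gives $\sum_{v\in V(H)}x_v=(\rho-1)p$, while on each path component $P_s$ the eigen-equations read $(\rho I-A(P_s))\,x=2p\,\mathbf 1$. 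Since $K_2+H\supseteq K_{2,n-2}$ we have $\rho\geq\sqrt{2n-4}>2>\rho(P_s)$, so $\rho I-A(P_s)$ is positive definite and $x=2p(\rho I-A(P_s))^{-1}\mathbf 1$ on $P_s$. Summing and dividing by $p$ yields the secular equation
$$\sum_{j} \tilde S(s_j;\rho)=\rho-1,\qquad \tilde S(s;\rho):=2\,\mathbf 1^{T}(\rho I-A(P_s))^{-1}\mathbf 1,$$
satisfied by $\rho=\rho(K_2+H)$, where $P_{s_1},P_{s_2},\dots$ are the components of $H$; the analogous equation (same function $\tilde S$, components of $H^*$) is satisfied by $\rho(K_2+H^*)>2$.

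Next I would record two properties of $\tilde S(s;\rho)$. First, $\partial_\rho \tilde S(s;\rho)=-2\|(\rho I-A(P_s))^{-1}\mathbf 1\|_2^2<0$, so the function $F(\rho):=(\rho-1)-\sum_j \tilde S(s_j;\rho)$ is strictly increasing on $(2,\infty)$; as $\rho(K_2+H)>2$ is a root of $F$, it is the \emph{unique} root there, and likewise for its analogue $F^*$ built from $H^*$. Consequently it suffices to prove $F^*(\rho)<F(\rho)$ for all $\rho>2$: then $F^*(\rho(K_2+H))<F(\rho(K_2+H))=0$, and since $F^*$ is increasing with unique root $\rho(K_2+H^*)$, we get $\rho(K_2+H^*)>\rho(K_2+H)$.

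Now $F(\rho)-F^*(\rho)=\sum_{\text{comps of }H^*}\tilde S-\sum_{\text{comps of }H}\tilde S$, and all unaffected components cancel, leaving
$$\big[\tilde S(s_1+1;\rho)+\tilde S(s_2-1;\rho)\big]-\big[\tilde S(s_1;\rho)+\tilde S(s_2;\rho)\big]=\Delta\tilde S(s_1;\rho)-\Delta\tilde S(s_2-1;\rho),$$
where $\Delta\tilde S(s;\rho):=\tilde S(s+1;\rho)-\tilde S(s;\rho)$ and $\tilde S(0;\rho):=0$ (this convention also covers $s_2=1$, where $H^*$ merges the isolated vertex into $P_{s_1}$). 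Since $s_1\geq s_2>s_2-1\geq 0$, it is enough to show $\tilde S(\cdot;\rho)$ is \emph{strictly convex} in $s$. To this end I would compute the closed form: putting $\rho=t+t^{-1}$ with $t=e^\theta>1$ and $c=\tfrac{2}{\rho-2}$, solving the recurrence gives $x_i=c\big(1-\cosh((i-\tfrac{s+1}{2})\theta)/\cosh(\tfrac{s+1}{2}\theta)\big)$, and the telescoping identity $\sum_{i=1}^s\cosh((i-\tfrac{s+1}{2})\theta)=\sinh(\tfrac{s\theta}{2})/\sinh(\tfrac{\theta}{2})$ yields
$$\tilde S(s;\rho)=cs-\frac{c\,e^{-\theta/2}}{\sinh(\theta/2)}\cdot\frac{1-q^{s}}{1+q^{s+1}},\qquad q:=e^{-\theta}\in(0,1).$$
Writing $\tilde S(s;\rho)=cs-\kappa\,r(s)$ with $\kappa>0$ and $r(s)=(1-q^s)/(1+q^{s+1})$, a direct computation of $r''$ (substituting $w=q^s$ and differentiating) shows $r$ is concave and increasing, whence $\tilde S$ is strictly convex in $s$. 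This gives $\Delta\tilde S(s_1;\rho)>\Delta\tilde S(s_2-1;\rho)$, i.e.\ $F^*(\rho)<F(\rho)$, completing the argument.

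The main obstacle is exactly this convexity step. The naive approach of realizing $H^*$ from $H$ by one edge swap and comparing Rayleigh quotients with the Perron vector of $K_2+H$ \emph{fails}: deleting the end-edge of $P_{s_2}$ exposes a former interior vertex, which carries relatively large eigenvector weight when $s_2\geq 3$, so the resulting quadratic-form difference can be negative. The global secular-function comparison circumvents this, at the cost of needing the closed form of $\tilde S$ and the sign analysis of $r''$; by contrast, the monotonicity $\partial_\rho\tilde S<0$ and the unique-root structure of $F,F^*$ are routine.
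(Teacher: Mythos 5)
Your proof is correct, and it takes a genuinely different route from the paper's. The paper argues locally with the Perron vector $X$ of $K_2+H$: it settles $s_2=1$ by subgraph monotonicity, $s_2=2$ by a one-edge swap with a case analysis on $x_{v_1}$ versus $x_{w_1}$, and for $s_2\geq 3$ it realizes $H^*$ by exactly the interior two-edge surgery you allude to (delete $v_{t_1}v_{t_1+1}$ and $w_{t_2}w_{t_2+1}$, add $v_{t_1}w_{t_2}$ and $v_{t_1+1}w_{t_2+1}$ with $t_1+t_2=s_2-1$), bounding $\rho^*-\rho$ below by $\frac{2}{X^TX}(x_{v_{t_1+1}}-x_{w_{t_2}})(x_{w_{t_2+1}}-x_{v_{t_1}})$; positivity of both factors is extracted from an inductive claim propagating the window $\frac{2}{\rho}\leq x_u\leq \frac{2}{\rho}+\frac{4.496}{\rho^2}$ of Lemma 2.8(ii) along the paths, with the admissible error doubling at each step --- which is precisely where the hypothesis $n\geq 10.2\times 2^{s_2}+2$ enters, and equality $\rho=\rho^*$ must then be excluded by a separate eigen-equation argument. (Your side remark about why the naive end-edge swap fails for $s_2\geq 3$ is accurate and is exactly why the paper resorts to interior surgery.) You instead use the automorphism $x_{u'}=x_{u''}$ and the resolvent identity $x|_{P_s}=2p(\rho I-A(P_s))^{-1}\mathbf 1$ to obtain the secular equation $\sum_j\tilde S(s_j;\rho)=\rho-1$, and reduce the lemma to strict convexity of $s\mapsto\tilde S(s;\rho)$ for each fixed $\rho>2$. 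I verified your closed form (e.g.\ it returns $\tilde S(1;\rho)=2/\rho$ and $\tilde S(2;3)=2$) and the convexity: with $w=q^s$ and $g(w)=\frac{1-w}{1+qw}$ one gets $r''(s)=(\ln q)^2 w\bigl(g''(w)w+g'(w)\bigr)=(\ln q)^2 w\,\frac{(1+q)(qw-1)}{(1+qw)^3}<0$ since $qw<1$, so $r$ is strictly concave and $\tilde S$ strictly convex; the case $s_2=1$ is covered by $r(0)=0=\tilde S(0;\rho)$; and strictness of $\rho<\rho^*$ falls out of the strict monotonicity of $F^*$, with no equality case needed. What your approach buys: it holds for every $\rho>2$, hence for all $n$ beyond a small constant, removing both the exponential-in-$s_2$ lower bound on $n$ and the dependence on Lemma 2.8, so you prove a strictly stronger statement. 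What the paper's approach buys: it stays within the elementary edge-swap/Rayleigh-quotient toolkit already built in Section 2 and avoids any closed-form resolvent computation, at the cost of the delicate inductive estimates and the large-$n$ hypothesis.
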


\begin{proof}
Note that both $K_2+H$ and $K_{2}+H^{*}$ are connected planar graphs of order $n$ containing $K_{2,n-2}$ as a subgraph. Let $V(K_2)=\{u_1,u_2\}$, $\rho=\rho(K_2+H)$ and $\rho^*=\rho(K_2+H^*)$. Clearly, $\rho\geq \rho(K_{2,n-2})=\sqrt{2n-4}$. Let $X=(x_v: v\in V(K_2+H))^T$ be the positive eigenvector of $\rho$ with $\max_{v\in V(K_2+H)}x_v=1$. By Lemma \ref{lem::8} (i), we have $x_{u_1}=x_{u_2}=1$. Suppose that $P_{s_1}=v_{1} v_{2} \cdots v_{s_{1}}$ and $P_{s_2}=w_{1} w_{2} \cdots w_{s_{2}}$.  If $s_{2}=1$, then $H$ is a proper subgraph of  $H^{*}$. This implies that $K_{2}+H$ is a proper subgraph of $K_{2}+H^{*}$, and hence  $\rho<\rho^{*}$, as required. Now suppose $s_{2}=2$. If $x_{v_{1}} \leq x_{w_{1}}$, then after deleting the edge $v_{1} v_{2}$ and adding the edge $v_{2} w_{1}$ in $H$ we will obtain the graph $H^*$. Thus we have
$$
\rho^{*}-\rho \geq \frac{X^{T}(A(K_{2}+H^{*})-A(K_{2}+H)) X}{X^{T} X}=\frac{2}{X^{T} X}(x_{w_{1}}-x_{v_{1}}) x_{v_{2}} \geq 0.
$$
Since $X$ is an eigenvector of $\rho$, we obtain $\rho x_{v_{1}}=2+x_{v_{2}}$. If $\rho=\rho^{*}$, then $X$ is also an eigenvector of $\rho^{*}$, and hence $\rho x_{v_{1}}=\rho^{*} x_{v_{1}}=2$. However, this is impossible because $\rho x_{v_{1}}=2+x_{v_{2}}$ and $x_{v_2}>0$. Therefore, $\rho<\rho^{*}$. If $x_{v_{1}}>x_{w_{1}}$, then after deleting the edge $w_1w_2$ and adding the edge $v_1w_2$ in $H$ we will obtain the graph $H^*$ again.  By using a similar analysis, we also can prove that $\rho<\rho^{*}$. Thus it suffices to consider the case that $s_{2} \geq 3$. Before going further, we need the following result.

\begin{claim}\label{claim::4.0}
Let $i$ be a positive integer, and let  $A_{i}=\left[\frac{2}{\rho}-\frac{4.496 \times 2^{i}}{\rho^{2}}, \frac{2}{\rho}+\frac{4.496 \times 2^{i}}{\rho^{2}}\right]$ and $B_{i}=\left[-\frac{4.496 \times 2^{i}}{\rho^{2}}, \frac{4.496 \times 2^{i}}{\rho^{2}}\right]$. Then the following statements hold:
\begin{enumerate}[(a)]
    \item $\rho^{i}(x_{v_{i+1}}-x_{v_{i}}) \in A_{i}$ for $1\leq i\leq \lfloor\frac{s_{1}-1}{2}\rfloor$;
    \item $\rho^{i}(x_{w_{i+1}}-x_{w_{i}}) \in A_{i}$ for $1\leq i\leq \lfloor\frac{s_{2}-1}{2}\rfloor$;
    \item $\rho^{i}(x_{v_{i}}-x_{w_{i}}) \in B_{i}$ for $1\leq i\leq \lfloor\frac{s_{2}}{2}\rfloor$.
\end{enumerate}
\end{claim}
\begin{proof}
    
(a) It suffices to prove that for every $i\in \{1,\ldots, \lfloor\frac{s_{1}-1}{2}\rfloor\}$,
$$
\rho^{i}(x_{v_{j+1}}-x_{v_{j}}) \in \begin{cases}A_{i}, & \mbox{if } j=i, \\ B_{i}, & \mbox{if } i+1 \leq j \leq s_{1}-i-1.\end{cases}
$$ 
We shall proceed the proof by induction on $i$. Clearly,

\begin{equation}\label{equ::16}
    \rho x_{v_{j}}=x_{u_1}+x_{u_2}+\sum_{v \in N_{R}(v_{j})} x_{v}= \begin{cases}2+x_{v_{2}}, & \mbox{if } j=1, \\ 2+x_{v_{j-1}}+x_{v_{j+1}}, & \mbox {if } 2 \leq j \leq s_{1}-1.\end{cases}
\end{equation}
By  \eqref{equ::16} and Lemma \ref{lem::8} (ii), we have
\begin{equation*}
    \rho(x_{v_{j+1}}-x_{v_{j}})= \begin{cases}
    x_{v_{1}}+x_{v_{3}}-x_{v_{2}} \in A_{1}, & \mbox{if } j=1,\\ (x_{v_{j}}-x_{v_{j-1}})+(x_{v_{j+2}}-x_{v_{j+1}}) \in B_{1}, & \mbox{if } 2 \leq j \leq s_{1}-2.
    \end{cases}
\end{equation*}
Thus the result holds for $i=1$. Now suppose $2 \leq i \leq\lfloor\frac{s_{1}-1}{2}\rfloor$, and assume that the result holds for $i-1$, that is,
\begin{equation}\label{equ::31}
\rho^{i-1}(x_{v_{l+1}}-x_{v_{l}})\in \begin{cases}A_{i-1}, & \mbox{if } l=i-1, \\ B_{i-1}, & \mbox{if } i \leq l \leq s_{1}-i.\end{cases}
\end{equation} 
Note that, for each $j\in\{i,\ldots, s_{1}-i-1\}$,  $\rho(x_{v_{j+1}}-x_{v_{j}})=(x_{v_{j}}-x_{v_{j-1}})+(x_{v_{j+2}}-x_{v_{j+1}})$, and it follows that
\begin{equation}\label{equ::18}
    \rho^{i}(x_{v_{j+1}}-x_{v_{j}})=\rho^{i-1}(x_{v_{j}}-x_{v_{j-1}})+\rho^{i-1}(x_{v_{j+2}}-x_{v_{j+1}}) .
\end{equation}
If $j=i$, then \eqref{equ::31} implies that  $\rho^{i-1}(x_{v_{j}}-x_{v_{j-1}}) \in A_{i-1}$ and $\rho^{i-1}(x_{v_{j+2}}-x_{v_{j+1}}) \in B_{i-1}$, and hence $\rho^{i}(x_{v_{j+1}}-x_{v_{j}})\in A_i$ by \eqref{equ::18}, as desired. If $i+1 \leq j \leq s_{1}-i-1$, then \eqref{equ::31} implies that  $\rho^{i-1}(x_{v_{j}}-x_{v_{j-1}}) \in B_{i-1}$ and $\rho^{i-1}(x_{v_{j+2}}-x_{v_{j+1}}) \in B_{i-1}$, and hence $\rho^{i}(x_{v_{j+1}}-x_{v_{j}})\in B_i$ by \eqref{equ::18}. Thus the result follows.

(b) The proof is similar to (a), and   we omit it here.

(c) It suffices to prove that $\rho^{i}(x_{v_{j}}-x_{w_{j}}) \in B_{i}$ for all $i \in\{1, \ldots,\lfloor\frac{s_{2}}{2}\rfloor\}$ and  $j \in\{i,\ldots,s_{2}-i\}$. We shall proceed the proof by induction on $i$. Clearly,
$$
\rho x_{w_{j}}=x_{u_1}+x_{u_2}+\sum_{w \in N_{R}(w_{j})} x_{w}= \begin{cases}2+x_{w_{2}}, & \text{if } j=1, \\ 2+x_{w_{j-1}}+x_{w_{j+1}}, & \text{if } 2 \leq j \leq s_{2}-1 .\end{cases}
$$
Combining this with (\ref{equ::16}) and Lemma \ref{lem::8} (ii), we obtain
$$
\begin{aligned}
    \rho(x_{v_{j}}-x_{w_{j}})= \begin{cases}x_{v_{2}}-x_{w_{2}} \in B_{1}, & \text{if } j=1, \\ (x_{v_{j+1}}-x_{w_{j+1}})+(x_{v_{j-1}}-x_{w_{j-1}}) \in B_{1}, & \text{if } 2 \leq j \leq s_{2}-1.\end{cases}
\end{aligned}
$$
Thus the result holds for $i=1$. Now suppose $2 \leq i \leq\lfloor\frac{s_{2}}{2}\rfloor$, and assume that the result holds for $i-1$, that is, $\rho^{i-1}(x_{v_l}-x_{w_l})\in B_{i-1}$ whenever $l\in\{i-1,\ldots,s_2-i+1\}$. For each $j\in \{i,\ldots,s_{2}-i\}$, we see that $\rho(x_{v_{j}}-x_{w_{j}})=(x_{v_{j-1}}-x_{w_{j-1}})+(x_{v_{j+1}}-x_{w_{j+1}})$, and hence
\begin{equation}\label{equ::19}
    \rho^{i}(x_{v_{j}}-x_{w_{j}})=\rho^{i-1}(x_{v_{j-1}}-x_{w_{j-1}})+\rho^{i-1}(x_{v_{j+1}}-x_{w_{j+1}}) .
\end{equation}
By the induction hypothesis, $\rho^{i-1}(x_{v_{j-1}}-x_{w_{j-1}}) \in B_{i-1}$ and $\rho^{i-1}(x_{v_{j+1}}-x_{w_{j+1}}) \in B_{i-1}$. Then it follows from  (\ref{equ::19}) that $\rho^{i}(x_{v_{j}}-x_{w_{j}})\in B_{i}$, as desired. 
\end{proof} 
Since $n \geq 10.2 \times 2^{s_{2}}+2$, we have $\rho \geq \sqrt{2 n-4}>4.496 \times 2^{s_{2}/2}$. 
Combining this with Claim \ref{claim::4.0}, we obtain
\begin{equation}\label{equ::20}
    x_{v_{i+1}}-x_{v_{i}} \geq \frac{2}{\rho^{i+1}}-\frac{4.496 \times 2^{i}}{\rho^{i+2}}>0
\end{equation}
whenever  $i \leq \min \{\frac{s_{2}}{2},\lfloor\frac{s_{1}-1}{2}\rfloor\}$
 and
\begin{equation}\label{equ::21}
    x_{v_{i+1}}-x_{w_{i}}=(x_{v_{i+1}}-x_{v_{i}})+(x_{v_{i}}-x_{w_{i}})\geq\left(\frac{2}{\rho^{i+1}}-\frac{4.496 \times 2^{i}}{\rho^{i+2}}\right)-\frac{4.496 \times 2^{i}}{\rho^{i+2}}>0
\end{equation}
whenever $i \leq \min \{\lfloor\frac{s_{2}}{2}\rfloor,\lfloor\frac{s_{1}-1}{2}\rfloor\}$. Similarly,
we also can deduce that
\begin{equation}\label{equ::22}
    x_{w_{i+1}}>x_{w_{i}} \mbox{ and } x_{w_{i+1}}>x_{v_{i}}
\end{equation}
whenever  $i\leq\lfloor\frac{s_{2}-1}{2}\rfloor$.
Recall that $s_{2} \geq 3$. Let $t_{1}$ and $t_{2}$ be two positive integers such that $t_{1}+t_{2}=s_{2}-1$. Then after deleting the edges $v_{t_{1}} v_{t_{1}+1},w_{t_{2}} w_{t_{2}+1}$  and adding the edges $v_{t_{1}} w_{t_{2}}, v_{t_{1}+1} w_{t_{2}+1}$ in $H$, we will obtain the graph $H^*$. Thus we have
\begin{equation}\label{equ::23}
    \rho^{*}-\rho \geq \frac{X^{T}(A(K_{2}+H^{*})-A(K_{2}+H)) X}{X^{T} X}\geq \frac{2}{X^{T} X}(x_{v_{t_{1}+1}}-x_{w_{t_{2}}})(x_{w_{t_{2}+1}}-x_{v_{t_{1}}}).
\end{equation}
If $s_{2}$ is odd, we take $t_1=t_2=\frac{s_{2}-1}{2}$. Then it follows from (\ref{equ::21}) and (\ref{equ::22}) that $x_{v_{t_{1}+1}}>x_{w_{t_{2}}}$ and $x_{w_{t_{2}+1}}>x_{v_{t_{1}}}$. Therefore, $\rho<\rho^{*}$ by (\ref{equ::23}), as desired. If $s_{2}$ is even, we only consider the case that $x_{w_{s_{2} / 2}} \geq x_{v_{s_{2} / 2}}$, since the proof for the case that $x_{w_{s_{2} / 2}}<x_{v_{s_{2} / 2}}$ is similar. Take $t_{1}=\frac{s_{2}}{2}$ and  $t_{2}=\frac{s_{2}}{2}-1$. Then we have  $x_{w_{t_{2}+1}} \geq x_{v_{t_1}}$ as $x_{w_{s_{2} / 2}} \geq x_{v_{s_{2} / 2}}$. If $s_{1}=s_{2}$, then $s_{1}$ is even, and hence $x_{v_{s_{1}/ 2+1}}=x_{v_{s_{1} / 2}}$ by symmetry, that is, $x_{v_{t_{1}+1}}=x_{v_{t_{1}}}$. If $s_{1} \geq s_{2}+1$, then  $x_{v_{t_{1}+1}}>x_{v_{t_{1}}}$ by (\ref{equ::20}). In both cases, we have $x_{v_{t_{1}+1}} \geq x_{v_{t_{1}}}$. Furthermore, from (\ref{equ::21}) we get $x_{v_{t_{1}}}>x_{w_{t_{2}}}$, and so $x_{v_{t_{1}+1}}>x_{w_{t_{2}}}$. Thus $\rho \leq \rho^{*}$ by (\ref{equ::23}). If $\rho=\rho^{*}$, then $X$ is also an eigenvector of $\rho^{*}$, and so $\rho x_{v_{t_{1}}}=\rho^{*} x_{v_{t_{1}}}=$ $2+x_{v_{t_{1}-1}}+x_{w_{t_{2}}}$. On the other hand, since $X$ is an eigenvector of $\rho$, we have $\rho x_{v_{t_{1}}}=2+x_{v_{t_{1}-1}}+x_{v_{t_{1}+1}}$. Hence,  $x_{v_{t_{1}+1}}=x_{w_{t_{2}}}$, which is a contradiction. Therefore, we conclude that $\rho<\rho^{*}$, and the result follows.

We complete the proof.
\end{proof}

\subsection{The spectral extremal graph for $W_k$-free planar graphs}

In this subsection, we shall prove Theorem \ref{thm::2}, which determines the unique extremal graph in $\mathrm{SPEX}_{\mathcal{P}}(n, W_k)$ for every integer  $k\geq 3$, provided that $n$ is sufficiently large relative to $k$. 

{\flushleft \it Proof of Theorem \ref{thm::2}.} Let $k\geq 3$ and  $n\geq \max\{2.67\times9^{17},10.2\times 2^{k-4}+2\}$. It is easy to verify that $W_{n,k}$ is planar and $W_k$-free for each $k\geq 3$. Suppose that $G$ is an extremal graph in $\mathrm{SPEX}_{\mathcal{P}}(n,W_k)$. First we claim that $G$ is connected, since otherwise we can add some new edges into $G$ such that the resulting graph is still $W_k$-free and planar, but has larger spectral radius than $G$, which is impossible. Note that $W_k$ is a planar graph that is not contained in $K_{2,n-2}$. Since $n\geq \max\{2.67\times9^{17},10.2\times 2^{k-4}+2\}>\frac{10}{9}k=\frac{10}{9}|V(W_k)|$, by Theorem \ref{thm::1} (i), there exist two vertices  $u^\prime,u^{\prime\prime}\in V(G)$ such that $R=N_G(u^\prime)\cap N_G(u^{\prime\prime})=V(G)\setminus\{u^{\prime},u^{\prime\prime}\}$ and $x_{u^{\prime}}=x_{u^{\prime\prime}}=1$. In particular,  $G$ contains a copy of $K_{2,n-2}$.

If $k=3$, then $W_k$ is a triangle. Since adding an arbitrary edge into $K_{2,n-2}$ would result in a triangle, we assert that $G\cong K_{2,n-2}=W_{n,3}$, as desired. If $k=4$, then we assert that $u^{\prime}u^{\prime\prime} \notin E(G)$. Indeed, if  $u^{\prime}u^{\prime\prime} \in E(G)$, then $G[R]$ is an empty graph, since otherwise $G$ would contain a copy of $W_4$. Hence, $G\cong K_2 +(n-2)K_1$. Let $R=\{u_1,u_2,\dots, u_{n-2}\}$ and $G^{\prime}=G-u^{\prime}u^{\prime\prime}+u_{1}u_{n-2}+\sum_{i=1}^{n-3}{u_{i}u_{i+1}}$. Clearly, $G^{\prime}\cong 2K_1 +C_{n-2}$, which is also planar and  $W_4$-free because $n\geq 2.67\times 9^{17}$. By Lemma \ref{lem::1} and Lemma \ref{lem::8} (ii), we have
    $$
    \begin{aligned}
        \rho(G^{\prime})-\rho &\geq \frac{2}{X^{T} X} \left(x_{u_1}x_{u_{n-2}}+\sum_{i=1}^{n-3}{x_{u_i}x_{u_{i+1}}}-x_{u^{\prime}}x_{u^{\prime\prime}}\right) \\
        & \geq \frac{2}{X^{T} X} \left( \left(\frac{2}{\rho}\right)^{2}\cdot (n-2)-1\right)\\
        &  \geq \frac{2}{X^{T} X} \left( \left(\frac{2}{2+\sqrt{2n-6}}\right)^2\cdot(n-2)-1\right)\\
        & >0,
    \end{aligned}
    $$
and hence $\rho(G^{\prime})>\rho$, contrary to the maximality of $\rho=\rho(G)$.  Therefore, $u^{\prime}u^{\prime\prime} \notin E(G)$. By Theorem \ref{thm::1} (ii), $G[R]$ is  a disjoint union of some paths and cycles. If $G[R]$ is a disjoint union of some paths, then $G$ is a subgraph of $2K_1+P_{n-2}$, and so must be a proper subgraph of $2K_1+C_{n-2}$, which is impossible by the maximality of $\rho=\rho(G)$. If $G[R]$ contains a cycle, again by Theorem \ref{thm::1} (ii), we obtain $G[R]\cong C_{n-2}$, and hence $G\cong 2K_1+C_{n-2}=W_{n,4}$, as required. Now suppose that $k\geq 5$. We have the following two claims.
\begin{claim}\label{claim::4.1}
     The vertices $u^\prime$ and $u^{\prime\prime}$ are adjacent, and $G[R]$ is $P_{k-2}$-free.
\end{claim}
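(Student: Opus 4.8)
The plan is to establish the two assertions in the opposite order: first I would prove that \emph{if} $u^\prime u^{\prime\prime}\in E(G)$ then $G[R]$ is $P_{k-2}$-free, and then use a transformation argument to force $u^\prime u^{\prime\prime}\in E(G)$. The first implication is the quick one. By Theorem~\ref{thm::1}(ii), once $u^\prime u^{\prime\prime}\in E(G)$ the subgraph $G[R]$ is a disjoint union of paths. If some component contained a copy of $P_{k-2}$, say $v_1v_2\cdots v_{k-2}$, then since $u^{\prime\prime}$ is adjacent to every vertex of $R$, the walk $u^{\prime\prime}v_1v_2\cdots v_{k-2}u^{\prime\prime}$ is a cycle $C_{k-1}$; as $u^\prime$ is adjacent to $u^{\prime\prime}$ and to all of $v_1,\dots,v_{k-2}$, the vertex $u^\prime$ together with this $C_{k-1}$ spans a copy of $W_k=K_1+C_{k-1}$ in $G$, contradicting $W_k$-freeness. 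Hence $G[R]$ is $P_{k-2}$-free.

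For the adjacency I would argue by contradiction, exhibiting a planar $W_k$-free graph with strictly larger spectral radius. Suppose $u^\prime u^{\prime\prime}\notin E(G)$. By Theorem~\ref{thm::1}(ii), $G[R]$ is either a disjoint union of paths or a single cycle $C_{n-2}$. Delete a minimal edge set $E_0\subseteq E(G[R])$ so that the remaining graph on $R$ becomes a $P_{k-2}$-free disjoint union of paths, and set $G^\prime=(G-E_0)+u^\prime u^{\prime\prime}$. Then $G^\prime$ is a subgraph of $K_2+P_{n-2}$, hence planar and connected, and a short case analysis (the one used above, together with the fact that every vertex of $R$ has degree at most $4$ in $G^\prime$, so cannot be a hub of $W_k$ for $k\geq 6$, while for $k=5$ a $P_3$-free $G^\prime[R]$ leaves no vertex of $R$ of degree $4$) shows $G^\prime$ is $W_k$-free.

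For the spectral comparison I would use the eigenvector $X$ of $G$ and the Rayleigh principle:
$$
\rho(G^\prime)-\rho\ \geq\ \frac{2}{X^{T}X}\Big(x_{u^\prime}x_{u^{\prime\prime}}-\sum_{ab\in E_0}x_ax_b\Big)=\frac{2}{X^{T}X}\Big(1-\sum_{ab\in E_0}x_ax_b\Big),
$$
since $x_{u^\prime}=x_{u^{\prime\prime}}=1$. Because $G$ contains $K_{2,n-2}$ (Theorem~\ref{thm::1}(i)), Lemma~\ref{lem::8}(ii) gives $x_a\leq \frac{2}{\rho}+\frac{4.496}{\rho^2}$ for each $a\in R$, so every deleted edge contributes at most $x_ax_b<\frac{5}{\rho^2}$. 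Cutting the paths (or the cycle $C_{n-2}$) into pieces of at most $k-3$ vertices costs $|E_0|\leq \frac{n-2}{k-3}+1$ deletions, whence $\sum_{ab\in E_0}x_ax_b<\big(\frac{n-2}{k-3}+1\big)\frac{5}{\rho^2}\leq \frac{5}{2(k-3)}+o(1)$ using $\rho^2\geq 2n-4$. For $k\geq 6$ this stays below $1$, the bracket is positive, and $\rho(G^\prime)>\rho$, the desired contradiction. The case $k=5$ I would dispose of separately: if $u^\prime u^{\prime\prime}\notin E(G)$ and $G[R]$ had either a $P_3$ or a cycle, then an internal (or cyclic) vertex $v\in R$ would have four neighbors $u^\prime,u^{\prime\prime}$ and two vertices $v^-,v^+$ of $R$ already spanning a $C_4$, namely $u^\prime\!-\!v^-\!-\!u^{\prime\prime}\!-\!v^+\!-\!u^\prime$, making $v$ the hub of a $W_5$ \emph{in $G$ itself}, contrary to $W_5$-freeness; thus $G[R]$ is already $P_3$-free with no cycle, $E_0=\emptyset$, and adding $u^\prime u^{\prime\prime}$ alone strictly increases $\rho$.

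The main obstacle I anticipate is the spectral bookkeeping in the adjacency step, especially keeping the net gain positive for the smallest admissible $k$. Adding the single edge $u^\prime u^{\prime\prime}$ buys a contribution of size $\approx 2x_{u^\prime}x_{u^{\prime\prime}}=2$, but shortening $G[R]$ may force the deletion of order $n/(k-3)$ edges, each of eigenvector weight $\approx (2/\rho)^2\approx 2/n$; these two linear-in-$n$ quantities are of the same order, so the inequality hinges on the constant $\frac{5}{2(k-3)}<1$. This is comfortable for $k\geq 6$ but fails at $k=5$, which is precisely why the $k=5$ configurations must instead be excluded outright by showing they already contain $W_5$. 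Propagating the constants of Lemma~\ref{lem::8}(ii) cleanly through this estimate is the delicate point.
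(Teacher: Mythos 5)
Your proposal is correct and takes essentially the same route as the paper: the same Rayleigh-quotient perturbation (add $u^{\prime}u^{\prime\prime}$, delete a sparse set of edges of $G[R]$, and compare $x_{u^{\prime}}x_{u^{\prime\prime}}=1$ against the deleted weight using the eigenvector bounds of Lemma \ref{lem::8}(ii)), the same closing of a $P_{k-2}$ in $G[R]$ through $u^{\prime\prime}$ into a $C_{k-1}$ with hub $u^{\prime}$, and the same separate treatment of $k=5$ by exhibiting a $W_5$ with hub an internal vertex of a $P_3$ in $G[R]$. The only (harmless) deviation is that for $k\geq 6$ you cut $G[R]$ directly into $P_{k-2}$-free pieces of at most $k-3$ vertices, with deleted weight about $\frac{5}{2(k-3)}<1$, whereas the paper first invokes extremality to pin down $G\cong 2K_1+C_{n-2}$ and then cuts the cycle into $P_4$-free pieces, getting the $k$-independent constant roughly $\frac{2}{3}<1$; both bookkeepings succeed for all $k\geq 6$.
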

\begin{proof}
First assume that $k=5$. If $G[R]$ contains a copy of $P_3=u_1u_2u_3$, then $u^{\prime}$, $u^{\prime\prime}$, $u_1$, $u_2$ and $u_3$ would generate a copy of $W_5$, a contradiction. Thus $G[R]$ is $P_3$-free, and so must be a disjoint union of some independent edges and isolated vertices. If $u^{\prime}u^{\prime\prime}\notin E(G)$, let $G^\prime=G+u^{\prime}u^{\prime\prime}$. Then it is easy to see that $G'$ is also planar and $W_5$-free. However, $\rho(G^\prime)>\rho$, which contradicts the maximality of $\rho=\rho(G)$. Therefore, $u^{\prime}u^{\prime\prime}\in E(G)$.

Now assume that $k\geq 6$. Suppose to the contrary that $u^{\prime}u^{\prime\prime}\notin E(G)$. By Theorem \ref{thm::1} (ii), $G[R]$ is a union of some paths and cycles. Combining this with  $u^{\prime}u^{\prime\prime}\notin E(G)$, we see that $G$ must be a subgraph of $2K_1+C_{n-2}$. Also note that $2K_1+C_{n-2}$ is planar and $W_k$-free because $k\geq 6$ and $n\geq \max\{2.67\times 9^{17}, 10.2\times 2^{k-4}+2\}>k+1$. Therefore, we conclude that $G\cong 2K_1+C_{n-2}$ by the maximality of $\rho=\rho(G)$.  Let $G[R]=u_{1}u_{2}u_{3}\cdots u_{n-2}u_{1}$, and let 
$$ 
G^{\prime}:= 
          \begin{cases}G+u^{\prime}u^{\prime\prime}-u_{1}u_{n-2}-\sum_{i=1}^{\frac{n-2}{3}-1}{u_{3i}u_{3i+1}}, & \mbox{if $n-2\equiv 0 \pmod 3$},\\
     G+u^{\prime}u^{\prime\prime}-u_{1}u_{n-2}-\sum_{i=1}^{\lfloor\frac{n-2}{3}\rfloor}{u_{3i}u_{3i+1}}, & \mbox{otherwise}.
          \end{cases} 
$$
Clearly, $G^{\prime}$ is $W_k$-free because $k\geq 6$ and $G^{\prime}[R]$ is $P_4$-free. Moreover, $G^{\prime}$ is a planar graph because it is a subgraph of  $K_2+P_{n-2}$.   By Lemma \ref{lem::8} (ii),
    if $n-2\equiv 0 \pmod 3$, then 
     $$
    \begin{aligned}
        \rho(G^{\prime})-\rho &\geq \frac{2}{X^{T} X} \left(x_{u^{\prime}}x_{u^{\prime\prime}}-x_{u_1}x_{u_{n-2}}-\sum_{i=1}^{\frac{n-2}{3}-1}{x_{u_{3i}}x_{u_{3i+1}}}\right) \\
        & \geq \frac{2}{X^{T} X} \left( 1-\left(\frac{n-2}{3}-1+1\right)\cdot \left(\frac{2}{\rho}+\frac{4.496}{\rho^{2}}\right)^{2}\right)\\
        &  \geq \frac{2}{X^{T} X} \left( 1-\frac{n-2}{3}\cdot \left(\frac{2}{\sqrt{2n-4}}+\frac{4.496}{2n-4}\right)^{2}\right)\\
        & >0,
    \end{aligned}
    $$
and if $n-2\not\equiv 0 \pmod 3$, then
    $$
    \begin{aligned}
        \rho(G^{\prime})-\rho &\geq \frac{2}{X^{T} X} \left(x_{u^{\prime}}x_{u^{\prime\prime}}-x_{u_1}x_{u_{n-2}}-\sum_{i=1}^{\left\lfloor\frac{n-2}{3}\right\rfloor}{x_{u_{3i}}x_{u_{3i+1}}}\right) \\
        & \geq \frac{2}{X^{T} X} \left( 1-\left(\left\lfloor\frac{n-2}{3}\right\rfloor+1\right)\cdot\left(\frac{2}{\rho}+\frac{4.496}{\rho^{2}}\right)^{2}\right)\\
        &  \geq \frac{2}{X^{T} X} \left( 1-\frac{n+1}{3}\cdot\left(\frac{2}{\sqrt{2n-4}}+\frac{4.496}{2n-4}\right)^{2}\right)\\
        & >0.
    \end{aligned}
    $$
Thus we have $\rho(G^\prime)>\rho$, contrary to the maximality of $\rho=\rho(G)$. Therefore, $u^{\prime}u^{\prime\prime}\in E(G)$, and it follows that $G[R]$ is $P_{k-2}$-free because $G$ is $W_k$-free.
\end{proof}

According to Theorem \ref{thm::1} (ii)  and Claim \ref{claim::4.1}, $G[R]$ is a disjoint union of some  paths with length at most $k-3$. Moreover, we see that $G[R]$ contains at least two components as $n\geq \max\{2.67\times9^{17},10.2\times 2^{k-4}+2\}$. Let $H^*=\lfloor\frac{n-2}{k-3}\rfloor P_{k-3} \cup P_{n-2-(k-3)\cdot\lfloor\frac{n-2}{k-3}\rfloor}$. Clearly, $K_2+H^*=W_{n,k}$. If $G[R]\cong H^*$, then $G\cong W_{n,k}$, and we are done. If $G[R]\ncong H^*$, then we can obtain $H^*$ by applying a series of $(s_1,s_2)$-transformations to $G[R]$,  where $s_2\leq s_1\leq k-4$. As $n\geq 10.2\times 2^{k-4}+2\geq 10.2\times 2^{s_2}+2$, by Lemma \ref{lem::9}, we conclude that $\rho<\rho(K_2+H^*)=\rho(W_{n,k})$, which is impossible by the maximality of $\rho=\rho(G)$.

This completes the proof.\qed

\subsection{The spectral extremal graph for $F_k$-free planar graphs}

In this subsection, we shall prove Theorem \ref{thm::3}, which determines the unique graph
in $\mathrm{SPEX}_{\mathcal{P}}(n, F_k)$ for every integer  $k\geq 1$, provided that $n$ is sufficiently large relative to $k$.

{\flushleft \it Proof of Theorem \ref{thm::3}.}  Suppose that  $n\geq \max\{2.67\times9^{17},10.2\times2^{2k-4}+2\}$. It is easy to verify that $F_{n,k}$ is planar and $F_{k}$-free for all $k\geq 1$. Suppose that $G$ is an extremal graph in $\mathrm{SPEX}_{\mathcal{P}}(n,F_k)$. First we claim that $G$ is connected, since otherwise we can add some new edges into $G$ such that the resulting graph is still $F_k$-free and planar, but has larger spectral radius than $G$, which is impossible. Note that $F_k$ is a planar graph that is not contained in $K_{2,n-2}$. Since $n\geq \max\{2.67\times9^{17},10.2\times2^{2k-4}+2\}>\frac{10}{9}(2k+1)=\frac{10}{9}|V(F_k)|$, by Theorem \ref{thm::1} (i), there exist two vertices  $u^\prime,u^{\prime\prime}\in V(G)$ such that $R=N_G(u^\prime)\cap N_G(u^{\prime\prime})=V(G)\setminus\{u^{\prime},u^{\prime\prime}\}$ and $x_{u^{\prime}}=x_{u^{\prime\prime}}=1$. In particular,  $G$ contains a copy of $K_{2,n-2}$.

If $k=1$, we assert that $G=K_{2,n-2}=F_{n,1}$, since $G$ is triangle-free and contains $K_{2,n-2}$ as a subgraph. If $k=2$, we shall prove that $u^{\prime}u^{\prime\prime} \in E(G)$. Suppose to the contrary that $u^{\prime}u^{\prime\prime}\notin E(G)$. Then $G[R]$ is $P_3$-free, and contains at most one edge, since otherwise $G$ would contain a copy of $F_2$. Hence, we conclude that $G\cong 2K_1 +(\left(n-4)K_1\cup K_2\right)$ by the maximality of $\rho=\rho(G)$. Let $v_1,v_2$ be the two vertices adjacent in $G[R]$, and let $G^{\prime}=G-v_1v_2+u^{\prime}u^{\prime\prime}$.  Clearly, $G^{\prime}\cong K_2 +(n-2)K_1=F_{n,2}$, which is planar and  $F_2$-free. As $\rho \geq \sqrt{2 n-4}$, by Lemma \ref{lem::8} (ii), we have
$$
\begin{aligned}
\rho(G^\prime)-\rho&\geq \frac{2}{X^TX}(x_{u^{\prime}}x_{u^{\prime\prime}}-x_{v_1}x_{v_2})\\
&\geq \frac{2}{X^TX}\left(1-\left(\frac{2}{\rho}+\frac{4.496}{\rho^{2}}\right)^{2}\right)\\
&\geq \frac{2}{X^TX}\left(1-\left(\frac{2}{\sqrt{2n-4}}+\frac{4.496}{2n-4}\right)^{2}\right)\\
&>0,
\end{aligned}
$$
and hence $\rho(G^\prime)>\rho$, which is impossible by the maximality of $\rho=\rho(G)$. Thus  $u^{\prime}u^{\prime\prime} \in E(G)$, and it follows that $G[R]$ is an empty graph, since otherwise $G$ would contain a copy of $F_2$. Therefore, $G\cong K_2+(n-2)K_1=F_{n,2}$, as desired. 
Now suppose that $k\geq 3$. We have the following claim.

\begin{claim}\label{claim::4.2}
     The vertices $u^\prime$ and $u^{\prime\prime}$ are adjacent. Moreover, $G[R]$ is a disjoint union of some paths, and contains at most $2k-4$ edges.
\end{claim}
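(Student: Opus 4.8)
The plan is to read $F_k$-containment as a matching condition. Since $F_k=K_1+kK_2$, a graph contains $F_k$ if and only if some vertex $c$ admits a matching of size $k$ inside $G[N(c)]$; equivalently, $F_k$-freeness means $\nu(G[N(c)])\le k-1$ for every vertex $c$, where $\nu$ denotes the matching number. Because $u'$ and $u''$ are adjacent to all of $R$, the decisive centers are $u'$ and $u''$, and I would extract everything by applying this condition to them.

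First I would prove $u'u''\in E(G)$. Assume not. Then $N(u')=R$, so applying the matching condition to $c=u'$ gives $\nu(G[R])\le k-1$; in particular $G[R]\ne C_{n-2}$, since a spanning cycle would have matching number $\lfloor(n-2)/2\rfloor\ge k$. Hence by Theorem~\ref{thm::1}(ii) the graph $G[R]$ is a disjoint union of paths, and a union of paths with matching number at most $k-1$ has at most $2(k-1)$ edges. I would then set $G'=K_2+(n-2)K_1$, obtained from $G$ by deleting every edge inside $R$ and inserting $u'u''$; this graph is planar (it is $K_{2,n-2}$ plus one edge) and $F_k$-free. Comparing via the Rayleigh quotient with the eigenvector $X$ of $G$, the single gained edge $u'u''$ contributes $x_{u'}x_{u''}=1$, while each of the at most $2(k-1)$ deleted edges inside $R$ contributes only $O(n^{-1})$ by Lemma~\ref{lem::8}(ii); hence $\rho(G')>\rho$, contradicting maximality. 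Thus $u'u''\in E(G)$, and then Theorem~\ref{thm::1}(ii) immediately yields that $G[R]$ is a disjoint union of paths.

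For the edge bound I would use the center $u'$ again, now with $N(u')=\{u''\}\cup R$. Since $u''$ is adjacent to every vertex of $R$, appending it raises the matching number by exactly one, i.e.\ $\nu(G[\{u''\}\cup R])=\nu(G[R])+1$, provided $G[R]$ has no perfect matching. The matching condition then reads $\nu(G[R])+1\le k-1$, so $\nu(G[R])\le k-2$; since any union of paths with matching number $m$ has at most $2m$ edges, I conclude $e(G[R])\le 2(k-2)=2k-4$.

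The one point requiring genuine care—and where the hypothesis that $n$ is large is used—is the degenerate case in which $G[R]$ has a perfect matching, because then appending the universal vertex $u''$ does not increase the matching number and the step $\nu(G[R])\le k-2$ fails. I would dispose of this case directly: a perfect matching of $R$ is itself a matching of size $(n-2)/2$ in $G[\{u''\}\cup R]$, so the matching condition $\nu(G[\{u''\}\cup R])\le k-1$ would force $(n-2)/2\le k-1$, i.e.\ $n\le 2k$, which is impossible for $n\ge 10.2\times 2^{2k-4}+2$. Hence $G[R]$ never has a perfect matching, the main computation goes through, and the claim follows.
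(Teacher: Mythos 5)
Your proof is correct and follows essentially the same route as the paper: rule out $C_{n-2}$ via $F_k$-freeness so Theorem \ref{thm::1}(ii) gives a union of paths, force $u'u''\in E(G)$ by the identical swap to $K_2+(n-2)K_1$ compared through the Rayleigh quotient with Lemma \ref{lem::8}(ii), and then bound $e(G[R])$ by finding a large matching in $N(u')$. Your matching-number formulation $\nu(G[N(c)])\le k-1$ (including the careful perfect-matching degenerate case, which the paper handles implicitly by picking an unmatched vertex $u\in R$) is just a clean repackaging of the paper's count of $k$ edge-disjoint triangles through $u'$.
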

\begin{proof}
Note that $G[R]\ncong C_{n-2}$ as $n\geq \max\{2.67\times9^{17},10.2\times2^{2k-4}+2\}$ and $G$ is $F_{k}$-free. By Theorem \ref{thm::1} (ii), $G[R]$ must be a disjoint  union of some paths. Suppose to the contrary that $u^{\prime}u^{\prime\prime} \notin E(G)$. Then we assert that $G[R]$ contains at most $2k-2$ edges. In fact, if $G[R]$ contains at least $2k-1$ edges, then $G[R]$ must have $k$ independent edges because all components of $G[R]$ are paths. 
Thus $G$ contains $k$ edge-disjoint triangles which intersect in the common vertex $u^\prime$, which is impossible because $G$ is $F_k$-free. Denote by $R=\{u_1,u_2,\dots,u_{n-2}\}$. Let $G^{\prime}=G+u^{\prime}u^{\prime\prime}-\sum_{u_iu_j\in E(G)}{u_iu_j}$. Clearly, $G^{\prime}\cong K_2 +(n-2)K_1$, which is planar and $F_k$-free.  By Lemma \ref{lem::8} (ii), 
    $$\begin{aligned}
        \rho(G^{\prime})-\rho &\geq \frac{2}{X^{T} X} \left(x_{u^{\prime}}x_{u^{\prime\prime}}-\sum_{u_iu_j\in E(G)}{x_{u_i}x_{u_j}}\right) \\
        & \geq \frac{2}{X^{T} X} \left( 1-(2k-2)\cdot\left(\frac{2}{\rho}+\frac{4.496}{\rho^{2}}\right)^{2}\right)\\
        &  \geq \frac{2}{X^{T} X} \left( 1-(2k-2)\cdot\left(\frac{2}{\sqrt{2n-4}}+\frac{4.496}{2n-4}\right)^{2}\right)\\
        & >0
    \end{aligned}$$
as $\rho \geq \sqrt{2 n-4}$ and $n\geq \max\{2.67\times9^{17},10.2\times2^{2k-4}+2\}$. Hence, $\rho(G^\prime)>\rho$, contrary to the maximality of $\rho=\rho(G)$. Therefore,  $u^{\prime}u^{\prime\prime} \in E(G)$. In this situation,  we assert that $G[R]$ contains at most $2k-4$ edges. Indeed, if  $G[R]$ contains at least $2k-3$ edges, then $G[R]$ must have $k-1$ independent edges, say $u_1u_2,\ldots,u_{2k-3}u_{2k-2}$. Take $u\in R\setminus\{u_1,\ldots,u_{2k-2}\}$. Then $u^{\prime},u^{\prime\prime},u,u_1,\ldots,u_{2k-2}$ would generate $k$ edge-disjoint triangles which intersect in the common vertex $u^\prime$, and so $G$ contains a copy of $F_k$, a contradiction.
\end{proof}

Let $H^*=P_{2k-3} \cup (n-2k+1)K_1$. Clearly, $K_2+H^*=F_{n,k}$. If $G[R]\cong H^*$, then $G\cong F_{n,k}$, and we are done. If $G[R]\ncong H^*$, by Claim \ref{claim::4.2}, $G[R]$ is a disjoint union of some paths and contains at most $2k-4$ edges, and so we can obtain $H^*$ by applying a series of $(s_1,s_2)$-transformations to $G[R]$,  where $s_2\leq s_1\leq 2k-4$. As $n\geq 10.2\times 2^{2k-4}+2\geq 10.2\times 2^{s_2}+2$, by Lemma \ref{lem::9}, we conclude that $\rho<\rho(K_2+H^*)=\rho(F_{n,k})$, which is impossible by the maximality of $\rho=\rho(G)$. 

This completes the proof.\qed

\subsection{The spectral extremal graph for $(k+1)K_2$-free planar graphs}

In this subsection, we shall prove Theorem \ref{thm::4}, which determines the unique extremal graph in $\mathrm{SPEX}_{\mathcal{P}}(n, (k+1)K_2)$ for every integer  $k\geq 1$, provided that $n$ is sufficiently large relative to $k$. To achieve this goal, we first consider connected  extremal graphs  in 
$\mathrm{SPEX}_{\mathcal{P}}(n, (k+1)K_2)$.

\begin{lem}\label{lem::10}
Let $k$ and $n$ be positive integers with $n\geq\max\{2.67\times9^{17},10.2\times2^{2k-4}+2\}$. Then $M_{n,k}$ is the  unique connected extremal graph in $\mathrm{SPEX}_{\mathcal{P}}(n,(k+1)K_2)$. In particular, $M_{n,k}$ is the unique graph attaining the maximum spectral radius among all connected planar graphs of order $n$ with matching number $k$.
\end{lem}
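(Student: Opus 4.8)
The plan is to translate the global forbidden-subgraph condition into a local constraint on $G[R]$ and then reuse the machinery already developed for the friendship-graph case. First I would record the elementary equivalence that a graph is $(k+1)K_2$-free if and only if its matching number $\nu(\cdot)$ is at most $k$. The case $k=1$ must be treated on its own: since $2K_2\subseteq K_{2,n-2}$, Theorem \ref{thm::1} does not apply, but a connected graph on $n\ge 4$ vertices with matching number $1$ is forced to be a star, so $K_{1,n-1}=M_{n,1}$ is the unique connected extremal graph. For $k\ge 2$ one checks that $(k+1)K_2$ is planar with matching number $k+1\ge 3>2=\nu(K_{2,n-2})$, hence $(k+1)K_2\not\subseteq K_{2,n-2}$, and Theorem \ref{thm::1} applies: the connected extremal graph $G$ contains a copy of $K_{2,n-2}$ with apex vertices $u',u''$ satisfying $x_{u'}=x_{u''}=1$ and $R=N_G(u')\cap N_G(u'')=V(G)\setminus\{u',u''\}$, while $G[R]$ is a disjoint union of paths and cycles.

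Next I would pin down the structure of $G[R]$. A cycle is impossible: if $G[R]$ contained a cycle then by Theorem \ref{thm::1}(ii) it would equal $C_{n-2}$, whose matching number $\lfloor (n-2)/2\rfloor$ exceeds $k$ for $n$ large, contradicting $\nu(G)\le k$. Hence $G[R]$ is a union of paths. I would then show $u'u''\in E(G)$: otherwise $G+u'u''$ remains planar (a subgraph of $K_2+P_{n-2}$), and since the two universal vertices can absorb at most two matching edges one has $\nu(G+u'u'')=\nu(G[R])+2=\nu(G)\le k$, so $G+u'u''$ is feasible with strictly larger spectral radius, contradicting extremality. With $u'u''\in E(G)$ in hand, the crucial reduction is the matching identity $\nu(G)=\nu(G[R])+2$: the lower bound follows by extending a maximum matching of $G[R]$, which (as $\nu(G[R])\le\nu(G)\le k\ll n$) leaves at least $n-2-2k\ge 2$ vertices of $R$ exposed for $u'$ and $u''$ to cover, and the upper bound follows because any matching of $G$ meets $\{u',u''\}$ in at most two edges. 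Thus $\nu(G[R])\le k-2$, and since a union of paths with matching number $t$ has at most $2t$ edges (with equality only when every nontrivial path has odd order), we obtain $e(G[R])\le 2k-4$.

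Finally, I would invoke the $(s_1,s_2)$-transformation of Lemma \ref{lem::9}. The target is $H^*=P_{2k-3}\cup(n-2k+1)K_1$, for which $K_2+H^*=M_{n,k}$ and which is readily checked to be planar and $(k+1)K_2$-free. If $G[R]\not\cong H^*$, I would first apply transformations with $s_2\ge 2$ to concentrate all path-length into a single path without changing the edge count, and then apply transformations with $s_2=1$ to absorb isolated vertices until the single path has exactly $2k-4$ edges, i.e. equals $P_{2k-3}$; each step strictly increases the spectral radius by Lemma \ref{lem::9}, which applies since every $s_2$ occurring is at most $2k-4$ and $n\ge 10.2\times 2^{2k-4}+2$. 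Chaining these inequalities gives $\rho(G)<\rho(K_2+H^*)=\rho(M_{n,k})$, contradicting the maximality of $\rho(G)$; hence $G\cong M_{n,k}$, and uniqueness follows. The ``in particular'' statement is then immediate, since the family of connected planar graphs of order $n$ with matching number exactly $k$ is contained in the family of connected $(k+1)K_2$-free planar graphs, and $M_{n,k}$ lies in the former while maximising $\rho$ over the latter.

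The main obstacle I anticipate is not any single hard estimate but the bookkeeping that converts the global constraint $\nu(G)\le k$ into the clean local bound $e(G[R])\le 2k-4$; in particular one must justify the matching identity $\nu(G)=\nu(G[R])+2$ carefully, exploiting that $n$ dwarfs $k$ so that a maximum matching of $G[R]$ leaves many exposed vertices, and one must verify that the single edge-addition $G\mapsto G+u'u''$ neither destroys planarity nor creates a matching of size $k+1$. The transformation chain itself only needs to preserve planarity (every intermediate graph is a subgraph of $K_2+P_{n-2}$) and to increase $\rho$, so no feasibility check is required along the chain; thus once the reduction to $e(G[R])\le 2k-4$ is secured, the remainder is a direct reuse of Lemma \ref{lem::9}, exactly as in the proof of Theorem \ref{thm::3}.
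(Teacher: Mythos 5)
Your proposal is correct, and its skeleton coincides with the paper's: invoke Theorem \ref{thm::1} (after checking $(k+1)K_2\not\subseteq K_{2,n-2}$ for $k\geq 2$), rule out $G[R]\cong C_{n-2}$ by matching number, show $u'u''\in E(G)$, bound $e(G[R])\leq 2k-4$, and finish with a chain of $(s_1,s_2)$-transformations via Lemma \ref{lem::9}; the $k=1$ star case is handled separately in both. Where you genuinely diverge is the adjacency step. The paper (Claim \ref{claim::4.3}) proves $u'u''\in E(G)$ spectrally: it compares $G$ with $G'=G+u'u''-\sum_{u_iu_j\in E(G[R])}u_iu_j\cong K_2+(n-2)K_1$ and shows $\rho(G')>\rho$ using the eigenvector bounds of Lemma \ref{lem::8}(ii) together with the estimate $1-(2k-2)\bigl(\frac{2}{\rho}+\frac{4.496}{\rho^2}\bigr)^2>0$. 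You instead add the single edge $u'u''$ and verify feasibility combinatorially, via the identity $\nu(G)=\nu(G[R])+2$ (valid since $|R|=n-2\gg 2k$ leaves exposed vertices for $u',u''$), which gives $\nu(G+u'u'')=\max\{\nu(G),\nu(G[R])+1\}=\nu(G)\leq k$; since $G+u'u''\subseteq K_2+P_{n-2}$ is planar, a proper feasible supergraph contradicts extremality with no spectral estimates at all. This is cleaner, but it exploits a monotonicity special to matchings: for $F_k$ (Theorem \ref{thm::3}) adding $u'u''$ can create the forbidden subgraph, which is presumably why the paper runs the same spectral comparison uniformly across Claims \ref{claim::4.1}--\ref{claim::4.3}. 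Likewise your bound $e(G[R])\leq 2k-4$, obtained from $\nu(G[R])\leq k-2$ plus the observation that a disjoint union of paths satisfies $e\leq 2\nu$, is equivalent to the paper's direct construction of $k+1$ independent edges from $u',u''$, two unmatched vertices of $R$, and $k-1$ independent edges guaranteed once $e(G[R])\geq 2k-3$. Your two-phase description of the transformation chain (first $s_2\geq 2$ steps concentrating edges into one path, then $s_2=1$ absorptions growing it to $P_{2k-3}$, with every occurring $s_2\leq k-1\leq 2k-4$ so Lemma \ref{lem::9}'s hypothesis $n\geq 10.2\times 2^{s_2}+2$ holds) is a correct and slightly more explicit rendering of what the paper states in one line.
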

\begin{proof}
Suppose  $n\geq \max\{2.67\times9^{17},10.2\times2^{2k-4}+2\}$. It is easy to verify that $M_{n,k}$ is planar and $(k+1)K_2$-free for all $k\geq 1$. Suppose that $G$ is a connected extremal graph in $\mathrm{SPEX}_{\mathcal{P}}(n, (k+1)K_2)$. If $k=1$, it is clear that $G=K_{1,n-1}=M_{n,1}$, as desired. If $k\geq2$, then $(k+1)K_2$ is not a subgraph of $K_{2,n-2}$. As $n\geq \max\{2.67\times9^{17},10.2\times2^{2k-4}+2\}>\frac{20}{9}(k+1)$, all conclusions in Theorem \ref{thm::1} hold. By  Theorem \ref{thm::1} (i), there exist two vertices  $u^\prime,u^{\prime\prime}\in V(G)$ such that $R=N_G(u^\prime)\cap N_G(u^{\prime\prime})=V(G)\setminus\{u^{\prime},u^{\prime\prime}\}$ and $x_{u^{\prime}}=x_{u^{\prime\prime}}=1$. In particular,  $G$ contains a copy of $K_{2,n-2}$. If $k=2$, then $G[R]$ is an empty graph because $G$ is $3K_2$-free. Moreover, we assert that $u^\prime u^{\prime\prime}\in E(G)$. Indeed, if $u^\prime u^{\prime\prime}\notin E(G)$, then $G\cong 2K_1 +(n-2)K_1$. Let $G^\prime=G+u^{\prime}u^{\prime\prime}$. Clearly, $G^{\prime}$ is planar and $3K_2$-free. However, $\rho(G^\prime)>\rho$, contrary to the maximality of $\rho=\rho(G)$. Therefore, $u^{\prime}u^{\prime\prime}\in E(G)$, and hence $G\cong K_2 +(n-2)K_1=M_{n,2}$, as desired. Now suppose that $k\geq 3$. We have the following claim.

\begin{claim}\label{claim::4.3}
     The vertices $u^\prime$ and $u^{\prime\prime}$ are adjacent. Moreover, $G[R]$ is a disjoint union of some paths, and contains at most $2k-4$ edges.
\end{claim}
\begin{proof}
Since $n\geq \max\{2.67\times9^{17},10.2\times2^{2k-4}+2\}$ and  $G$ is $(k+1)K_2$-free, we have $G[R]\ncong C_{n-2}$. By Theorem \ref{thm::1} (ii), $G[R]$ must be a disjoint  union of some paths. By contradiction, suppose that $u^{\prime}u^{\prime\prime} \notin E(G)$. Then we assert that $G[R]$ contains at most $2k-2$ edges. In fact, if $G[R]$ contains at least $2k-1$ edges, then $G[R]$ must have $k$ independent edges because all components of $G[R]$ are paths. 
This implies that $(k+1)K_2$ is a subgraph of $G$, and we obtian a contradiction. Denote by $R=\{u_1,u_2,\dots,u_{n-2}\}$. Let $G^{\prime}=G+u^{\prime}u^{\prime\prime}-\sum_{u_iu_j\in E(G)}{u_iu_j}$. Clearly, $G^{\prime}\cong K_2 +(n-2)K_1$, which is planar and $(k+1)K_2$-free.  By Lemma \ref{lem::8} (ii), 
    $$\begin{aligned}
        \rho(G^{\prime})-\rho &\geq \frac{2}{X^{T} X} \left(x_{u^{\prime}}x_{u^{\prime\prime}}-\sum_{u_iu_j\in E(G)}{x_{u_i}x_{u_j}}\right) \\
        & \geq \frac{2}{X^{T} X} \left( 1-(2k-2)\cdot\left(\frac{2}{\rho}+\frac{4.496}{\rho^{2}}\right)^{2}\right)\\
        &  \geq \frac{2}{X^{T} X} \left( 1-(2k-2)\cdot\left(\frac{2}{\sqrt{2n-4}}+\frac{4.496}{2n-4}\right)^{2}\right)\\
        & >0
    \end{aligned}$$
as $\rho \geq \sqrt{2 n-4}$ and $n\geq \max\{2.67\times9^{17},10.2\times2^{2k-4}+2\}$. Hence, $\rho(G^\prime)>\rho$, contrary to the maximality of $\rho=\rho(G)$. Therefore,  $u^{\prime}u^{\prime\prime} \in E(G)$. In this situation,  we assert that $G[R]$ contains at most $2k-4$ edges. Indeed, if  $G[R]$ contains at least $2k-3$ edges, then $G[R]$ must have $k-1$ independent edges, say $u_1u_2,\ldots,u_{2k-3}u_{2k-2}$. Take $u,\Bar{u}\in R\setminus\{u_1,\ldots,u_{2k-2}\}$. Then $u^{\prime},u^{\prime\prime},u,\Bar{u},u_1,\ldots,u_{2k-2}$ would generate $k+1$ independent edges, and so $G$ contains a copy of $(k+1)K_2$, which is a contradiction.
\end{proof}

Let $H^*=P_{2k-3} \cup (n-2k+1)K_1$. Clearly, $K_2+H^*=M_{n,k}$. If $G[R]\cong H^*$, then $G\cong M_{n,k}$, and we are done. If $G[R]\ncong H^*$, by Claim \ref{claim::4.3}, $G[R]$ is a disjoint union of some paths and contains at most $2k-4$ edges, and so we can obtain $H^*$ by applying a series of $(s_1,s_2)$-transformations to $G[R]$,  where $s_2\leq s_1\leq 2k-4$. As $n\geq 10.2\times 2^{2k-4}+2\geq 10.2\times 2^{s_2}+2$, by Lemma \ref{lem::9}, we conclude that $\rho<\rho(K_2+H^*)=\rho(M_{n,k})$, which is impossible by the maximality of $\rho=\rho(G)$.

By the above discussions, we conclude that $M_{n,k}$ is the unique connected extremal graph in $\mathrm{SPEX}_{\mathcal{P}}(n,(k+1)K_2)$ for any $k\geq 1$. Thus the first part of the lemma follows. Also note that the matching number  of $M_{n,k}$ is exactly $k$. Therefore, $M_{n,k}$ is the unique graph attaining the maximum spectral radius among all connected planar graphs of matching number $k$.
\end{proof}

Now we are in a position to give the proof of Theorem \ref{thm::4}.

{\flushleft \it Proof of Theorem \ref{thm::4}.}  
Let $n\geq N+\frac{3}{2}\sqrt{2N-6}$ with $N= \max\{2.67\times9^{17},10.2\times2^{2k-4}+2\}$. Suppose that $G$ is an extremal graph in $\mathrm{SPEX}_{\mathcal{P}}(n, (k+1)K_2)$. If $G$ is connected, by Lemma \ref{lem::10}, we obtain $G\cong M_{n,k}$, and the result follows. Now assume that $G$ is disconnected. Let $G_1,G_2,\ldots,G_{\ell}$ ($\ell\geq 2$) denote the components of $G$. Without loss of generality, we suppose that $\rho(G_1)=\rho(G)=\rho$. For any $i\in\{1,\ldots,\ell\}$, let $V(G_i)=\{u_{i,1},u_{i,2},\dots,u_{i,n_{i}}\}$, and let $k_i$ be the matching number of $G_i$. Clearly, $\sum_{i=1}^{\ell}{k_i} \leq k$ and $\sum_{i=1}^{\ell}{n_i}=n$.  If $k_1\leq k-1$, then $G^{\prime}:=G-\cup_{i=2}^lE(G_i)+ \sum_{i=2}^\ell\sum_{k=1}^{n_i}{u_{i,k}u_{1,1}}$ would be a connected $(k+1)K_2$-free planar graph. As $G_1$ is a proper subgraph of  $G^{\prime}$, we have
$\rho(G^{\prime})>\rho(G_1)=\rho(G)$, which is a contradiction. Therefore, $k_1=k$, and $G=G_1\cup (n-n_1)K_1$. Moreover, we claim that $G_1$ must be an extremal graph in $\mathrm{SPEX}_{\mathcal{P}}(n_1, (k+1)K_2)$, since otherwise $G$ would not be an extremal graph in $\mathrm{SPEX}_{\mathcal{P}}(n, (k+1)K_2)$. If $n_1\geq N=\max\{2.67\times9^{17},10.2\times2^{2k-4}+2\}$, by Lemma \ref{lem::10}, we have $G_1\cong M_{n_1,k}=K_2+(P_{2k-3}\cup (n_1-2k+1)K_1)$, and hence $G\cong M_{n_1,k}\cup (n-n_1)K_1$. In this situation, $G$ is a proper subgraph of $M_{n,k}=K_2+(P_{2k-3}\cup (n-2k+1)K_1)$, and it follows that $\rho=\rho(G)<\rho(M_{n,k})$, contrary to the maximality of $\rho=\rho(G)$.  Thus it suffices to consider the case that  $n_1<N=\max\{2.67\times9^{17},10.2\times2^{2k-4}+2\}$. If $k\geq 2$, by Lemma \ref{lem::1}, we obtain
$$\rho=\rho(G_1)\leq 2+\sqrt{2n_{1}-6}<  2+\sqrt{2N-6}\leq \frac{1+\sqrt{8n-15}}{2}=\rho(K_2+(n-2)K_1)\leq \rho(M_{n,k})$$
as $n\geq N+\frac{3}{2}\sqrt{2N-6}$ and $K_2+(n-2)K_1$ is a subgraph of $M_{n,k}$.  However, this is  impossible by the maximality of $\rho=\rho(G)$. If $k=1$, then $G$ must be a proper subgraph of $K_{1,n-1}=M_{n,1}$, and we obtain a contradiction. Therefore, we conclude that $G\cong M_{n,k}$, and the result follows.
\qed

\section*{Acknowledgement}

X. Huang is supported by the National Natural Science Foundation of China (Grant No. 11901540). Huiqiu Lin was supported by the National Natural Science Foundation of China (Grant No. 12271162), Natural Science Foundation of Shanghai (Nos. 22ZR1416300 and 23JC1401500) and the Program for Professor of Special Appointment (Eastern Scholar) at Shanghai Institutions of Higher Learning (No. TP2022031).

\end{document}